\documentclass[oneside, 10pt, article, draft]{memoir}


\usepackage{amssymb}
\usepackage{amsmath}
\usepackage{amsfonts}
\usepackage{amsthm}

\usepackage[english]{babel}

\usepackage{graphicx}


\usepackage[plain, english]{fancyref}
\usepackage{hyperref}
\hypersetup{
     colorlinks   = true,
     citecolor    = blue,
     linkcolor    = blue,
     final
}

\usepackage{bbm}
\usepackage{mathrsfs}
\usepackage{stmaryrd}

\usepackage[all, line]{xy}

\usepackage{ifthen}

\usepackage{xspace}

\usepackage{calc}
\usepackage{color}

\usepackage{fixme}

\usepackage{cancel} 

\usepackage{verbatim} 
\usepackage{xparse}
\usepackage{environ}

\usepackage{scalerel}



\def\clap#1{\hbox to 0pt{\hss#1\hss}}

\def\mathrlap{\mathpalette\mathrlapinternal}

\def\mathrlapinternal#1#2{%
           \rlap{$\mathsurround=0pt#1{#2}$}}

\makeatletter
\def\blfootnote{\xdef\@thefnmark{}\@footnotetext}
\makeatother

\newcommand{\mathset}[1]{\mathbbm{#1}}

\newcommand{\setN}{\mathset{N}}
\newcommand{\setZ}{\mathset{Z}}

\newcommand{\setR}{\mathset{R}}
\newcommand{\setC}{\mathset{C}}

\newcommand{\trdot}{\smash{\cdot}}
\newcommand{\lcc}{{\nabla^{g}}}
\newcommand{\bwedge}{\raise1pt\hbox{\ensuremath{\bigwedge}}}

\renewcommand{\d}{\partial}

\newcommand{\mc}{c}

\newcommand{\ccint}{\lambda}

\newcommand{\qs}{H^{(k)}}
\newcommand{\pqs}{\mathcal{H}^{(k)}}

\newcommand{\qb}{\hat H^{(k)}}
\newcommand{\pqb}{\mathcal{\hat H}^{(k)}}

\newcommand{\trivcon}{\nabla^{\:\!\raise1.0pt\hbox{$\scriptscriptstyle T$}}}

\newcommand{\sssetC}{{\scriptscriptstyle \setC}}

\newcommand{\g}{\mathfrak{g}}
\newcommand{\gc}{\mathfrak{g}_{\sssetC}}

\renewcommand{\phi}{\varphi}
\renewcommand{\epsilon}{\varepsilon}
\renewcommand{\otimes}{\varotimes}

\newcommand{\sotimes}{\mathbin{\!\!\raise1.5pt\hbox{
      $\scriptscriptstyle\otimes$}}}

\newcommand{\boldnabla}{\mbox{\boldmath$\nabla$}}
 
\newcommand{\com}[2]{\big [ #1, #2 \big ]}

\newsavebox\CBox
\newcommand\hcancel[2][0.2pt]{%
  \ifmmode\sbox\CBox{$#2$}\else\sbox\CBox{#2}\fi%
  \makebox[0pt][l]{\usebox\CBox}%
  \rule[0.3\ht\CBox-#1/2]{\wd\CBox}{#1}}

\newcommand{\hcc}{F_{\raisebox{-1pt}{\mbox{\boldmath$\hspace{-0.5pt}
        {\scriptstyle \nabla}$}}}}     
\newcommand{\hcct}{F_{\raisebox{0pt}{\mbox{\boldmath$\hspace{-0.5pt}
        {\scriptstyle \tilde \nabla}$}}}}

\newcommand\scale[2]{{#2}}  

\DeclareMathOperator{\im}{Im}       
       
\DeclareMathOperator{\Id}{Id}
        
\DeclareMathOperator{\SU}{SU}        
\DeclareMathOperator{\U}{U}        
\DeclareMathOperator{\SL}{SL}
\DeclareMathOperator{\PSL}{PSL}

\DeclareMathOperator{\Tr}{Tr}
\DeclareMathOperator{\Hom}{Hom}     
\DeclareMathOperator{\End}{End}

\DeclareMathOperator{\re}{Re}

\DeclareMathOperator{\Sym}{\mathcal{S}}


\theoremstyle{plain}
\newtheorem{theorem}{Theorem}[chapter]
\newtheorem{lemma}[theorem]{Lemma}
\newtheorem{proposition}[theorem]{Proposition}

\newtheorem{definition}[theorem]{Definition}

\theoremstyle{definition}




\makeatletter       

\let\@xp\expandafter 
\newcommand\DefineFancyrefPrefix[2]{%
  \@namedef{fancyref#1labelprefix}{#1}%
  \@namedef{Fref#1name}{#2}%
  \@namedef{fref#1name}{\MakeLowerCase{\@nameuse{Fref#1name}}}%
  \def\@style{vario}%
  \@xp\@xp\@xp\frefformat\@xp\@xp\@xp\@style\@xp\csname
  fancyref#1labelprefix\endcsname
  {%
    \@nameuse{fref#1name}\fancyrefdefaultspacing##1##3%
  }%
  \@xp\@xp\@xp\Frefformat\@xp\@xp\@xp\@style\@xp\csname
  fancyref#1labelprefix\endcsname
  {%
    \@nameuse{Fref#1name}\fancyrefdefaultspacing##1##3%
  }%
  \def\@style{plain}%
  \@xp\@xp\@xp\frefformat\@xp\@xp\@xp\@style\@xp\csname
  fancyref#1labelprefix\endcsname
  {%
    \@nameuse{fref#1name}\fancyrefdefaultspacing##1%
  }%
  \@xp\@xp\@xp\Frefformat\@xp\@xp\@xp\@style\@xp\csname
  fancyref#1labelprefix\endcsname
  {%
    \@nameuse{Fref#1name}\fancyrefdefaultspacing##1%
  }%
}
\makeatother


\DefineFancyrefPrefix{ax}{Axiom}
\DefineFancyrefPrefix{cor}{Corollary}
\DefineFancyrefPrefix{lem}{Lemma}
\DefineFancyrefPrefix{prop}{Proposition}
\DefineFancyrefPrefix{thm}{Theorem}
\DefineFancyrefPrefix{def}{Definition}
\DefineFancyrefPrefix{rem}{Remark}
\DefineFancyrefPrefix{ex}{Example}
\DefineFancyrefPrefix{cha}{Chapter}



\let\amsthmproof\proof
\let\amsthmendproof\endproof

\RenewDocumentEnvironment{proof}{s}
 {\IfBooleanTF{#1}
   {\ignoreproof}
   {\amsthmproof}}
 {\IfBooleanTF{#1}
   {\endignoreproof}
   {\amsthmendproof}}


\NewEnviron{ignoreproof}{\begin{proof} The proof is hidden. \end{proof}}




\makepagestyle{niels}

\makeoddfoot{niels}{}{\thepage}{}
\setlrmarginsandblock{3.9cm}{3.9cm}{*} 
\checkandfixthelayout[nearest]

\chapterstyle{article}
\pagestyle{niels}

\begin{document}

\title{The Hitchin-Witten Connection and \\ Complex Quantum
  Chern-Simons Theory} 

\author{J{\o}rgen Ellegaard Andersen and Niels Leth Gammelgaard}

\blfootnote{Supported in part by the center of excellence grant ``Center for quantum geometry of Moduli Spaces" from the Danish National Research Foundation)}

\maketitle

\begin{abstract}
  We give a direct calculation of the curvature of the Hitchin
  connection, in geometric quantization on a symplectic manifold,
  using only differential geometric techniques. In particular, we
  establish that the curvature acts as a first-order operator on the
  quantum spaces. Projective flatness follows if the K\"ahler
  structures do not admit holomorphic vector fields. Following Witten,
  we define a complex variant of the Hitchin connection on the bundle
  of prequantum spaces. The curvature is essentially unchanged, so
  projective flatness holds in the same cases. Finally, the results
  are applied to quantum Chern-Simons theory, both for compact and
  complex gauge groups.
\end{abstract}


\chapter{Introduction}

Since their introduction by Atiyah \cite{MR1001453}, Segal
\cite{MR981378} and Witten \cite{MR953828,MR990772}, topological
quantum field theories (TQFTs) have been studied intensely using a
wide range of techniques. The first construction in 2 + 1 dimensions
was given by Reshetikhin and Turaev
\cite{MR1036112,MR1091619,MR1292673} using representation theory of
quantum groups at roots of unity to construct link invariants and in
turn derive invariants of 3-manifolds through surgery and Kirby
calculus. Shortly thereafter, a combinatorial construction was given
by Blanchet, Habegger, Masbaum and Vogel \cite{MR1191373,MR1362791} in
the language of skein theory. 

A geometric realization was proposed by
Witten \cite{MR953828}, suggesting the use of quantum Chern-Simons
theory or conformal field theory to construct the 2-dimensional
part. The gauge theoretic approach was studied independently by
Axelrod, Della Pietra and Witten \cite{MR1100212} and Hitchin
\cite{MR1065677},
proving that the quantum spaces arising from geometric quantization of
Chern-Simons theory for compact gauge group are indeed independent of
the conformal structure on the surface, in the sense that they are
identified by parallel transport of a projectively flat connection
over the Teichm\"uller space of the surface. These constructions have
been expressed and generalized in purely differential geometric terms
in \cite{MR2928087} and \cite{MR2928088}, and we shall be mainly
concerned with this description in the present paper. The other
construction proposed by Witten, through conformal field theory, was
provided by Tsuchiya, Ueno and Yamada \cite{MR1048605}, and the link
to the gauge theoretic construction was established by Laszlo
\cite{MR1669720}. 

Only recently has the relation to Reshetikhin and Turaev been fully
demonstrated. In a series of papers
\cite{MR2339577,MR2306213,MR2928086,1110.5027}, the first author of
this paper and Ueno obtain a modular functor, from a twist of the
conformal field theory construction, and identify it with the modular
functor constructed from skein theory, and hence with the original
construction of Reshetikhin and Turaev. This has paved the way for
studying the TQFT through geometric quantization of moduli space and
in particular the application of Toeplitz operator theory, see
e.g. \cite{MR3181488,MR2195137,MR2681692,MR2883417,MR2928087,MR2436739,MR2595923,MR2807846,MR2928090,1408.2499}.

For non-compact gauge group, the situation is very different. In the
paper \cite{MR1099255}, Witten initiated the study of Chern-Simons
topological quantum field theory for complex gauge groups from a
physical point of view. He proposed that the smooth sections of the
Chern-Simons line bundle over the moduli space of flat connections in
the corresponding compact real form of the group should be the
appropriate pre-Hilbert space of this theory. He reduced the
description to this model space by considering a real polarization,
which we review in section \ref{QCST}, on the space of connections
with values in the complex gauge group. Although this model space
itself does not depend on a choice of complex structure on the
surface, the polarization, and hence the interpretation as the quantum
space, does. Witten argued that the needed infinitesimal change of
polarization, under infinitesimal change of the complex structure on
the surface, can be encoded as a connection in the trivial bundle with
the fixed model space as fiber. Furthermore, Witten provided
infinite-dimensional gauge theory arguments for the projective
flatness of this connection.

In this paper, we review the general differential geometric
construction of the Hitchin connection for a rigid family of complex
structures on a symplectic manifold with vanishing first Betti number
and first Chern class represented essentially by the symplectic
form. Inspired by Witten's considerations, we also consider a certain
one-parameter family of connections on the prequantum spaces. For any
value of the parameter, the resulting connection will be called the
Hitchin-Witten connection. The main feature of both connections is
projective flatness, at least when the K\"ahler structures have few
symmetries, and we shall establish this fact by direct curvature
calculations.

Let us briefly introduce the setting and state the main results. The
basic assumptions and their implications will be explored in greater
detail in the following sections. Consider a symplectic manifold
$(M,\omega)$, with vanishing first Betti number and first Chern class
given by $c_1(M, \omega) = \ccint \big [ \tfrac{\omega}{2\pi} \big]$,
for some integer $\ccint$. Futhermore, let $\mathcal{T}$ be a complex
manifold parametrizing a holomorphic family $J \colon \mathcal{T} \to
C^\infty(M, \End(TM))$ of integrable almost complex structures on $(M,
\omega)$, none of which admit non-constant holomorphic functions on
$M$. The variation of the K\"ahler structure, along the holomorphic
part of a vector field on $V$ on $\mathcal{T}$, is encoded by a
section $G(V) \in C^\infty(M, S^2(T'\!M))$, defined by $V'[J] = G(V)
\trdot \omega$, of the second symmetric power of the holomorphic
tangent bundle, and we will assume that the family $J$ is \emph{rigid}
in the sense that the bivector field $G(V)$ defines a holomorphic
section $G(V) \in H^0_J(M,S^2(T'\!M))$.

By the assumption on the Chern class, the symplectic manifold $(M,
\omega)$ admits a Hermitian line bundle $\mathcal{L}$ with a
compatible connection of curvature $F_\nabla = -i\omega$. For any
$\sigma \in \mathcal{T}$, the space $\qs_\sigma = H^0_\sigma(M,
\mathcal{L}^k)$ of holomorphic setions is the quantum space, at level
$k \in \setN$, arising from geometric quantization using the complex
structure $J_\sigma$. These spaces sit inside the prequantum space
$\pqs = C^\infty(M, \mathcal{L}^k)$ of smooth sections, and as $\sigma
\in \mathcal{T}$ varies, they form a sub-bundle of the trivial bundle
$\pqb = \mathcal{T} \times \pqs$. As proved in \cite{MR2928087}, this
sub-bundle is preserved by the explicitly given connection,
\begin{align*}
  \boldnabla_V = \trivcon_V + \frac{1}{4k+2n}(\Delta_{G(V)} + 2
  \nabla_{G(V) \trdot dF} - 2\ccint V'[F]) + V'[F],
\end{align*}
where $\Delta_{G(V)}$ is a second-order operator with symbol $G(V)$,
and $F\in C^\infty(\mathcal{T} \times M)$ is the Ricci potential,
expressing the relation between the Ricci form $\rho$ and the
symplectic form through $\rho = \ccint \omega + 2i \d \bar \d F$. This
connection will be called the Hitchin connection, and generalizes the
connections studied by \cite{MR1065677} and \cite{MR1100212} in the
setting of quantum Chern-Simons theory for compact gauge group.

Inspired by the work of Witten \cite{MR1099255}, we may also use the
family of K\"ahler structures to define another connection on $\pqb$
by the expression,
\begin{align*}
  \tilde \boldnabla_V \!=\! \trivcon_V \! + \!\frac{1}{2t}
  (\Delta_{G(V)} \! + \!2 \nabla_{G(V) \trdot dF}\! -\! 2 \ccint
  V'[F]) - \frac{1}{2\bar t} (\Delta_{\bar G(V)} \! + \!2 \nabla_{
    \bar G(V) \trdot dF} \!- \! 2\ccint V''[F]) \!+ \! V[F],
\end{align*}
for any complex number $t \in \setC$ with real part equal to $k$. We
shall refer to this as the Hitchin-Witten connection at level
$k$. Of course it also depends on the imaginary part of \nolinebreak[1] $t$,
so for each level $k$ we get family of theories, parametrized by one
real parameter.

Unlike the Hitchin connection, the Hitchin-Witten connection will not
preserve the sub-bundle of quantum spaces. Both connections do,
however, share another meritorious feature, captured by the main
theorem of the paper.

\begin{theorem}
  \label{thm:MT}
  If all complex structures in the family have zero-dimensional
  symmetry group, then both the Hitchin and Hitchin-Witten connections
  are projectively flat.
\end{theorem}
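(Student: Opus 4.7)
The plan is to compute the curvatures of $\boldnabla$ and $\tilde\boldnabla$ directly as differential operators on the prequantum bundle $\pqb$, in accordance with the strategy announced in the abstract. Writing both connections in the form $\trivcon + A$, where $A$ is a one-form on $\mathcal{T}$ with values in differential operators on $\pqs$ of order at most two, the curvature is
\begin{align*}
  F(V,W) = V[A_W] - W[A_V] - A_{[V,W]} + [A_V, A_W],
\end{align*}
and only the commutator term is \emph{a priori} of order higher than two. The first task is therefore to show that the potential third-order symbol of $[A_V, A_W]$ cancels, which I would do by expanding $[\Delta_{G(V)}, \Delta_{G(W)}]$ using the rigidity hypothesis (so that $G(V)$ and $G(W)$ are holomorphic sections of $S^2(T'\!M)$) together with the Kähler identities, turning the bracket of symbols into an expression governed by the holomorphic Schouten bracket, which vanishes under rigidity.

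Once third-order terms are eliminated, the next step is to organize the remaining second-order contributions. Here the symmetry of $G(V) \trdot G(W)$ in $V \leftrightarrow W$, combined with the explicit prefactors $\tfrac{1}{4k+2n}$ (resp. $\tfrac{1}{2t}$, $-\tfrac{1}{2\bar t}$) chosen precisely so that cross-terms between $\Delta_{G(V)}$ and $\nabla_{G(W)\trdot dF}$ pair up correctly, should be shown to collapse entirely. For the Hitchin–Witten connection one additionally uses that $\Delta_{G(V)}$ is holomorphic in symbol whereas $\Delta_{\bar G(W)}$ is anti-holomorphic, so the mixed commutators are of lower order than naively expected. At this stage I expect that $F_{\boldnabla}(V,W)$ and $F_{\tilde\boldnabla}(V,W)$ reduce to first-order differential operators on $\pqs$, whose symbols $X(V,W)$ are vector fields on $M$ depending on $\sigma \in \mathcal{T}$, together with a zeroth-order (scalar) piece built from second derivatives of the Ricci potential $F$.

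The final and decisive step is to identify the symbol $X(V,W)$. By naturality of the construction under the action of biholomorphisms, and because the commutator of two differential operators preserving the holomorphic subbundle $\qs_\sigma \subset \pqs$ is again such an operator, the symbol $X(V,W)$ must be a holomorphic vector field preserving $J_\sigma$ for every $\sigma$. Under the hypothesis that each $J_\sigma$ has zero-dimensional symmetry group, such a holomorphic vector field is identically zero, so the first-order part vanishes and the curvature reduces to multiplication by a function. This is exactly projective flatness of $\boldnabla$ (on the quantum sub-bundle) and of $\tilde\boldnabla$ (on all of $\pqb$).

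The main obstacle is the middle step: the curvature calculation is a long bookkeeping exercise in which commutators of $\Delta_{G(V)}$, $\nabla_{G(V)\trdot dF}$ and multiplication by $V'[F]$ must be expanded to second order, and the $\mathcal{T}$-derivatives $V[F]$, $V[G(W)]$ must be related to the Kähler data via the defining equations $V'[J] = G(V)\trdot\omega$ and $\rho = \ccint\omega + 2i\,\d\bar\d F$. The inclusion of the anti-holomorphic terms in $\tilde\boldnabla$ roughly doubles the work, but the algebraic structure is parallel, and the cancellations are governed by the same rigidity identities; only the symbol-vanishing argument differs, requiring symbols of both holomorphic and anti-holomorphic type to vanish separately.
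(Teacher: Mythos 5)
Your overall strategy --- compute the curvature directly as a differential operator, show the higher-order symbols cancel, and kill the residual first-order symbol using the absence of infinitesimal symmetries --- is exactly the paper's route (Theorems \ref{thm:4}, \ref{thm:1} and \ref{thm:2}). However, two of your key steps would fail as stated. The first concerns the third-order cancellation: you claim that the symmetrized Schouten-type bracket $\Sym(G(V)\trdot\nabla G(W)) - \Sym(G(W)\trdot\nabla G(V))$ ``vanishes under rigidity'', i.e.\ because $G(V)$ and $G(W)$ are holomorphic sections of $S^2(T'\!M)$. That is false as a pointwise statement: for two holomorphic symmetric bivector fields $A=f\,\partial\otimes\partial$ and $B=g\,\partial\otimes\partial$ on a one-dimensional K\"ahler manifold the difference is the Wronskian $fg'-gf'$, generically nonzero. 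What actually makes it vanish is \Fref{prop:14}: differentiating the rigidity condition $\nabla_{a''}G(V)^{bc}=0$ along $W$, and using the second variation of $\tilde g$ and of the Levi-Civita connection, shows that $\Sym(G(V)\trdot\nabla G(W))$ is manifestly symmetric in $V\leftrightarrow W$. This uses crucially that $G(V)$ and $G(W)$ are derivatives of the \emph{same} family $J$, not merely that each is holomorphic; the same symmetry, propagated through divergences (\Fref{prop:2}), is also what kills the second- and first-order terms of the $(2,0)$-part, so your ``collapse entirely'' conceals the bulk of the work.

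The second gap is in the decisive step. Your argument that the residual first-order symbol is a holomorphic vector field rests on the curvature preserving the subbundle $\qb$; that is valid for $\boldnabla$ but not for $\tilde\boldnabla$, which by construction does \emph{not} preserve the quantum subspaces, and ``naturality under biholomorphisms'' yields equivariance, not holomorphicity. The paper closes this by computing (\Fref{thm:1} versus \Fref{thm:4}, via \Fref{prop:8}) that the first-order symbols of the Hitchin--Witten curvature are literally the same vector field $X'_{\d_{\scriptscriptstyle\mathcal{T}}\mc(V,W)}$ and its conjugate that govern the Hitchin curvature, so holomorphicity --- and hence vanishing under the zero-dimensional symmetry hypothesis --- is inherited. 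Finally, note that reducing the curvature to a zeroth-order operator is not yet projective flatness: you still need the resulting function on $M$ to be \emph{constant}, which the paper obtains from preservation of $\qb$ (holomorphic functions are constant by standing assumption) together with \Fref{prop:1} for the $(1,1)$-part.
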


Since the moduli spaces have no holomorphic vector fields, which is
equivalent to the condition stated in the theorem, we get the
following immediate corollary

\begin{theorem}
  \label{thm:MTCS}
  The Hitchin and Hitchin-Witten connections for the moduli spaces of
  flat connections on a closed oriented surface are projectively flat.
\end{theorem}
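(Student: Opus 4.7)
The plan is to verify that the K\"ahler data on the moduli space of flat connections on a closed oriented surface $\Sigma$ satisfies the standing hypotheses of Theorem \ref{thm:MT}, and then to invoke that theorem directly. For $\Sigma$ of genus $g \geq 2$ and a simply-connected compact simple Lie group $G$, the stable locus $M$ of the moduli space of flat $G$-connections on $\Sigma$ is a compact symplectic manifold carrying the Atiyah--Bott form $\omega$, has vanishing first Betti number, and is prequantized by the Chern--Simons line bundle with curvature $-i\omega$. The first Chern class satisfies $c_1(M,\omega) = \ccint[\omega/2\pi]$ for an integer $\ccint$ determined by the dual Coxeter number of $G$. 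Any conformal structure $\sigma \in \mathcal{T}(\Sigma)$ induces a K\"ahler structure $J_\sigma$ on $M$ via the identification with the moduli space of semistable holomorphic $G_\setC$-bundles on $(\Sigma,\sigma)$, and the resulting family $J \colon \mathcal{T}(\Sigma) \to C^\infty(M,\End(TM))$ is holomorphic and rigid; this verification, essentially due to Hitchin, was carried out in detail in \cite{MR2928087}. For complex gauge group, the analogous data underlies the Hitchin--Witten connection, as reviewed later in the paper.

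The one genuinely non-formal input is that each K\"ahler structure $(M,J_\sigma)$ has zero-dimensional symmetry group. As emphasized immediately before the statement of the theorem, this is equivalent to $H^0_{J_\sigma}(M, T'\!M) = 0$, i.e.\ the non-existence of non-trivial global holomorphic vector fields on $M$. I would invoke this as a standard vanishing result from the algebraic geometry of moduli of stable $G_\setC$-bundles on a Riemann surface of genus at least two, rather than re-prove it from scratch.

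With these ingredients in hand, Theorem \ref{thm:MT} applies verbatim and yields projective flatness of both the Hitchin connection $\boldnabla$ and the Hitchin--Witten connection $\tilde \boldnabla$ over $\mathcal{T}(\Sigma)$. The only non-routine point is the vanishing of global holomorphic vector fields; beyond that, the singular behaviour of $M$ at reducible connections is handled in the customary way by restricting to the stable (irreducible) locus, where the general construction applies without modification.
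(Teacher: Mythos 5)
Your overall strategy --- verify the standing hypotheses of \Fref{thm:MT} for the family of K\"ahler structures on the moduli space parametrized by Teichm\"uller space, and then apply that theorem --- is exactly the route the paper takes, and the list of ingredients you assemble (prequantizability, vanishing of $H^1$, the Chern class condition, holomorphicity and rigidity of the family, and absence of holomorphic vector fields) is the right one. Two of your verifications are too quick, however, and one of them conceals a genuine exception.

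First, the stable locus $M^s$ is compact only when the rank $n$ and the degree $d$ are coprime; otherwise it is the smooth open part of a singular projective variety, and the hypothesis that the K\"ahler structures admit no non-constant holomorphic functions is not automatic. The paper handles this via the Hartogs theorem, using that the complement of $M^s$ in $M$ has codimension at least two. Second, the ``standard vanishing result'' for holomorphic vector fields is the Narasimhan--Ramanan theorem \cite{MR0384797}, which covers only the coprime case. In the non-coprime case the paper follows Hitchin \cite{MR1065677}: a holomorphic vector field on $M^s$ is viewed as a function on $T^*M^s$, extended by Hartogs to the moduli space of semi-stable Higgs bundles, and shown to vanish there. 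That argument genuinely breaks down when $g = n = 2$ and the holonomy around the puncture is trivial, since the moduli space is then $\setC P^3$ and plainly does carry holomorphic vector fields; the paper explicitly excludes this case. A blanket appeal to a vanishing theorem for all gauge groups and all genus $\geq 2$ would therefore prove something false in that exceptional situation. Relatedly, the paper works only with $G = \SU(n)$, where holomorphicity and rigidity of the family $J$ are established in the literature; your extension to an arbitrary simply-connected compact simple group is asserted rather than verified.
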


This result on the Hitchin connection, for the moduli spaces of flat
$\SU(n)$ connections, is due to Hitchin, but his proof uses algebraic
geometric properties of these moduli spaces. As mentioned above,
Witten gave an infinite-dimensional gauge-theoretic argument for this
result on the Hitchin-Witten connection, for the same moduli
spaces. In this paper, we provide a purely differential geometric
finite-dimensional argument for projective flatness, which applies in
the more general setting we have described.

As explained above, the quantum representation of the mapping class
groups from the Reshetikhin-Turaev TQFT for $\SU(n)$ coincides with
the representation obtained from the Hitchin connection. We expect
that the same will hold for the Hitchin-Witten connection and quantum
Chern-Simons theory for the complex gauge group $\SL(n,\setC)$. In the
paper \cite{GenusoneSL2C}, the first author has computed explicitly
the resulting representation of the mapping class group for genus $1$,
the gauge group $\SL(2,\setC)$ and all integer levels $k$.

Only recently has the whole TQFT been rigorously constructed by the
first author of this paper and Kashaev in \cite{MR3227503} and
\cite{1305.4291} for the case of $\PSL(2,\setC)$ and level $k=1$,
using quantum Teichm\"{u}ller theory and the Faddeev quantum
dilogarithm. In the paper \cite{AK_CQCS}, the same authors have
constructed quantum Chern-Simons theory for $\PSL(2,\setC)$ and all
non-negative integer levels $k$ and further understood how it relates
to the geometric quantization of the $\PSL(2,\setC)$-moduli spaces. In
fact, they have proposed a very general scheme which just requires a
Pontryagin self-dual locally compact group, which we expect will lead
to the construction of the $\SL(n,\setC)$ for all non-negative integer
levels $k$. This should be seen in parallel to the developments on indeces \cite{1208.1663,MR3073925}, which should be related to the level $k=0$ theory. In the physics literature, the complex quantum Chern-Simons theory has been discussed from a path integral point of view in a number of
papers \cite{MR3250765,MR3148093,MR2551896,MR3080552,MR2465747,MR2134725,Hik1,Hik2,MR2809462,MR1133274} and latest by
Dimofte \cite{Dimofte3d3d} using the more advanced 3d-3d
correspondence.

\subsection*{Outline}

Let us briefly outline the organization of the paper.  Section
\ref{cha:famil-kahl-struct} discusses general aspects of families of
K\"ahler structures on a fixed symplectic manifold. In particular, a
rather serious holomorphicity condition, called rigidity
(\Fref{def:4}), on the infinitesimal deformation of the K\"ahler
structure will be discussed. We derive a symmetry result
(\Fref{prop:2}) for certain tensor fields associated with such
families, which will prove crucial in the calculation of the curvature
of the Hitchin connection.

In section \ref{cha:geom-quant}, we briefly recall the basics of
geometric quantization, described as a two stage process where the
prequantum space $\pqs$ is first constructed as sections of a line
bundle, and the quantum space $\qs$ is then defined as the subspace of
polarized sections with respect to some choice of auxiliary K\"ahler
polarization. One way of understanding the influence of this choice on
quantization is by studying the infinitesimal behaviour of a family of
such polarizations, and this is exactly the approach employed with the
Hitchin connection, which relates the quantum spaces through parallel
transport. Following the initial discussion, we calculate the
commutators of general second-order differential operators acting on
the prequantum spaces. These results will be useful when calculating
the curvature of the Hitchin connection.

The Hitchin connection is the subject of section
\ref{cha:hitchin-connection}, which contains the main results. After
briefly reviewing the differential geometric construction of the
Hitchin connection, we move on to the straightforward but rather
lengthy calculation of its curvature, culminating with the first major
result in \Fref{thm:4}. The fact that the curvature acts as a
differential operator of order at most one is a crucial
point. Building on these computations and inspiration from Witten's
work on quantum Chern-Simons theory for complex gauge group
\cite{MR1099255}, we then consider the Hitchin-Witten connection
defined in \eqref{eq:38} as well as above. The second major result is
the calculation of its curvature in \Fref{thm:1}. The expressions turn
out to be essentially equal to the curvature of the Hitchin connection
in \Fref{thm:4}, and in particular it acts as a differential operator
of order at most one. This property, shared by both connections,
entails projective flatness if, for instance, the family of complex
structures does not admit holomorphic vector fields. This is the
content of \Fref{thm:2}, leading ultimately to \Fref{thm:MT}.

The final section applies the results to quantum Chern-Simons theory,
both for compact and complex gauge groups. The Hitchin connection was
originally studied in this setting, with compact gauge group $\SU(n)$,
by Hitchin \cite{MR1065677} and Axelrod, Della Pietra and
\mbox{Witten \cite{MR1100212}}. Our results provide another proof of 
projective flatness, which still relies on the absence of holomorphic
vector fields, but uses general properties of rigid families of
K\"ahler structures to prove the vanishing of higher-order symbols. As
mentioned, the case of complex gauge group was studied by Witten in
\cite{MR1099255}, where he used a real polarization to reduce the
quantum space from complex to unitary connections. The real
polarization, and hence the reduction, depends on the conformal
structure on the surface, and Witten derived a formula for an analogue
of the Hitchin connection in this model, arriving at exactly the
expression \eqref{eq:38}. We recall the necessary theory and connect
it with the results of previous sections, providing a differential
geometric and purely finite-dimensional construction of a projectively
flat connection, the Hitchin-Witten connection, in Witten's model of
quantum Chern-Simons theory with complex gauge group.

\pagebreak[0]

\chapter{Families of K\"ahler Structures}
\label{cha:famil-kahl-struct}

Before we recall the construction of the Hitchin connection and
calculate its curvature, we will explore the properties of families of
K\"ahler structures on a symplectic manifold. Such families are
central to the notion of a Hitchin connection, and the results
obtained will play a fundamental role in subsequent parts. The
section serves to introduce notation, establish conventions and provide a
number of basic results for later reference. The result is somewhat
lengthy and can be read swiftly on first reading. 

Let $(M, \omega)$ be a symplectic manifold. If $\mathcal{T}$ is a
manifold, we say that a smooth map,
\begin{align*}
  J \colon \mathcal{T} \to C^\infty(M, \End(TM)),
\end{align*}
is a family of K\"ahler structures on $(M, \omega)$ if it defines an
integrable and $\omega$-compatible almost complex structure for every
point $\sigma \in \mathcal{T}$. Smoothness of $J$ means that it
defines a smooth section of the pullback bundle $\pi_M^* \End(TM)$
over $\mathcal{T} \times M$, where $\pi_{M} \colon \mathcal{T} \times
M \to M$ denotes the projection.

For any point $\sigma \in \mathcal{T}$, the almost complex structure
$J_\sigma$ induces a splitting,
\begin{align*}
  TM_\setC = T'\!M_\sigma \oplus T''\!M_\sigma,
\end{align*}
of the complexified tangent bundle of $M$ into the two eigenspaces of
$J_\sigma$, with associated subspace projections $ \pi_\sigma^{1,0}
\colon TM_\setC \to T'\!M_\sigma$ and $\pi^{0,1}_\sigma \colon
TM_\setC \to T''\!M_\sigma$, explicitly given by
\begin{align*}
  \pi^{1,0}_\sigma = \tfrac{1}{2} (\Id - iJ_\sigma) \qquad \text{and}
  \qquad \pi^{0,1}_\sigma = \tfrac{1}{2} (\Id + iJ_\sigma).
\end{align*}
We denote by $X = X'_\sigma + X''_\sigma$ the associated splitting of
a vector field $X$ on $M_\sigma$.  In general, the subscript $\sigma$
indicates dependence on the complex structure, but we shall typically
ommit it when the dependence is obvious and the formula is valid for
any point in $\mathcal{T}$.

The K\"ahler metric associated with the complex structure $J$ is given
by
\begin{align*}
  g = \omega \trdot J,
\end{align*}
where the dot denotes contraction of tensors.  The inverses of $g$ and
$\omega$ are denoted by $\tilde g$ and $\tilde \omega$, respectively,
and they are the unique symmetric, respectively anti-symmetric,
bivector fields satisfying
\begin{align*}
  g \trdot \tilde g = \tilde g \trdot g = \Id \qquad \text{and} \qquad
  \omega \trdot \tilde \omega = \tilde \omega \trdot \omega = \Id.
\end{align*}
As above, a dot will be used to denote contraction of tensors, and the
placement of the tensors relative to each other indicates which
entries to contract. We will, however, also encounter more complicated
expressions, where the entries to be contracted cannot be indicated by
simply placing the tensors next to each other. In such cases, we will
use abstract indices to denote the entries of each tensor. The indices
only name the entries of a tensor, and do not represent a choice of
local coordinates. As usual, subscript indices refer to covariant
entries of a tensor, whereas superscript indices refer to
contravariant entries, and following the Einstein convention, repeated
indices will indicate contraction. If the two contracted indices are
both either subscript or superscript, the K\"ahler metric is used for
contraction. With these conventions, the above identities become
\begin{align*}
  g_{ab} = \omega_{au} J_b^u \qquad \text{and} \qquad g_{au}\tilde
  g^{ub} = \omega_{au} \tilde \omega ^{ub} = \Id_a^b.
\end{align*}
In a few places, we will also need to apply the projections
$\pi^{1,0}$ and $\pi^{0,1}$ to the entries of the tensor. In the index
notation, composition with $\pi^{1,0}$ will be indicated by a prime on
the index, whereas composition with $\pi^{0,1}$ will be indicated by
two primes. As an example we can write
\begin{align*}
  \omega_{ab} = \omega_{a'b''} + \omega_{a'' b'}
\end{align*}
for the symplectic form, which is of type (1,1) with respect to a
compatible complex structure.

Associated with the K\"ahler metric $g$, we have the Levi-Civita
connection $\lcc$, and as usual its curvature is defined by
\begin{align*}
  R(X,Y)Z = \nabla_X \nabla_Y Z - \nabla_Y \nabla_X Z - \nabla_{[X,Y]}
  Z,
\end{align*}
for any vector fields $X, Y$ and $Z$ on $M$. In abstract index
notation, the curvature writes $R_{abc}^d$, and we can use the metric
to lower the upper index and get the curvature tensor
\begin{align*}
  R_{abcd} = R_{abc}^rg_{rd}.
\end{align*}
The Ricci curvature is the symmetric $J$-invariant tensor $r$ defined
by
\begin{align*}
  r(X,Y) = \Tr ( Z \mapsto R(Z,X)Y),
\end{align*}
and its corresponding skew-symmetric two-form is the Ricci form $\rho
= J \trdot r$, which would correspond to
\begin{align*}
  r_{ab} = R_{uab}^u = R_{uabu} = R_{auub} \qquad \text{and} \qquad
  \rho_{ab} = J^u_a r_{ub} = \frac{1}{2} R_{abuv} \tilde \omega^{uv}
\end{align*}
in index notation. Finally, the scalar curvature $s$ is the metric
trace of the Ricci curvature
\begin{align*}
  s = r_{uu} = r_{uv} \tilde g^{uv} = \rho_{uv} \tilde \omega^{vu}.
\end{align*}

If $E$ is a vector bundle over $M$ and $D \in \mathcal{D}(M, E)$ is a
differential operator of order at most $n$, we can assign the
principal symbol $\sigma_P(D) \in C^\infty(M, S^n(TM))$, which is a
symmetric section of the $n$'th tensor power of the tangent bundle. If
the principal symbol vanishes, then $D$ is of order at most $n-1$.  In
general, there is no good notion of lower order symbols of
differential operators, but a connection on $E$ and can be combined
with the Levi-Civita connection on $M$ to define symbols of all
orders. For vector fields $X_1, \ldots, X_n$ on $M$, we consider the
inductively defined differential operator on sections of $E$,
\begin{align}
  \label{eq:47}
  \nabla^n_{X_1, \ldots, X_n} s = \nabla_{X_1} \nabla^{n-1}_{X_2,
    \ldots, X_n} s - \sum \mathop{}_{\!j} \nabla^{n-1}_{X_1, \ldots,
    \nabla_{X_1} X_j, \ldots, X_n} s,
\end{align}
with the obvious induction start given by the covariant derivative. It
is easily verified that this expression is tensorial in the vector
fields, so we get a map
\begin{align*}
  \nabla^n \colon C^\infty(M, TM^n)\to \mathcal{D}(M, E).
\end{align*}
For any tensor field $T_n \in C^\infty(M, TM^n)$, the symbol of
$\nabla^n_{T_n}$ is given by the symmetrization $\Sym(T_n) \in
C^\infty(M, S^n(TM))$ of $T_n$.  If $D \in \mathcal{D}(M, E)$ is an
operator, of order at most $n$, with principal symbol $\sigma_P(D) =
S_n \in C^\infty(M, S^n(TM))$, then the operator $D - \nabla^n_{S_n}$
is of order at most $n-1$, since its principal symbol
vanishes. Inductively, it follows that the operator $D$ can be written
uniquely in the form
\begin{align}
  \label{eq:53}
  D = \nabla^n_{S_n} + \nabla^{n-1}_{S_{n-1}} + \cdots + \nabla_{S_1}
  + S_0,
\end{align} where $S_d \in C^\infty(M, S^d(TM))$ is called the
\emph{symbol of order $d$} and gives rise to a map
\begin{align*}
  \sigma_d \colon \mathcal{D}(M, E) \to C^\infty(M, S^d(TM)).
\end{align*}
Any finite order differential operator on $E$ is uniquely determined
by the values of these symbol maps. In fact, through the expression
\eqref{eq:53}, a choice of symbols specifies a differential operator
on any vector bundle with connection, and in particular on functions.

We will also need the notion of divergence of vector fields and more
general contravariant tensors. Recall that the divergence $\delta X$
of a vector field $X$ on $M$ is defined in terms of the Lie derivative
and volume form by the equation $\mathcal{L}_X \omega^m = (\delta X)
\omega^m$. Although the divergence of a vector field only depends on
the symplectic volume, and not on the K\"ahler metric itself, a simple
computation reveals that the divergence can be calculated using the
Levi-Civita connection by the formula
\begin{align}
  \label{eq:32}
  \delta X = \Tr \nabla X = \nabla_a X^a,
\end{align}
in which the independence of the K\"ahler structure is perhaps not so
evident. The Laplace-{de Rham} operator on functions can be expressed
in terms of the divergence by
\begin{align}
  \label{eq:33}
  \smash{\Delta f = -2i \delta X'_f},
\end{align}
where $X'_f = \bar \d f \trdot \tilde \omega$ denotes the (1,0)-part
of the Hamiltonian vector field associated with the function $f \in
C^\infty(M)$.

The formula \eqref{eq:32} generalizes to tensors of higher degree. For
vector fields $X_1, \ldots, X_n$ om $M$, we define
\begin{align*}
  \delta(X_1 \sotimes \cdots \sotimes X_n) = \delta (X_1) X_2 \sotimes
  \cdots \sotimes X_n + \sum \mathop{}_{\!j} X_2 \sotimes
  \cdots \sotimes \nabla_{X_1} X_j \sotimes \cdots \sotimes X_n.
\end{align*}
This defines a map $\delta \colon C^\infty(M, TM^{n})
\to C^\infty(M, TM^{n-1})$, also called the divergence, which does
depend on the K\"ahler structure.

\pagebreak[1]
The generalization of divergence to sections of the endomorphism
bundle of the tangent bundle will also be convenient. If $\alpha \in
\Omega^1(M)$ is a one-form and $X$ is a vector field, we define
\begin{align*}
  \delta (X \otimes \alpha) = \delta (X) \alpha + \nabla_X \alpha,
\end{align*}
which gives a map $\delta \colon C^\infty(M, \End(TM)) \to
\Omega^1(M)$.

Finally, for any bivector field $B \in C^\infty(M, TM^2)$, we
introduce the second-order differential operator,
\begin{align*}
  \Delta_B = \nabla^2_B + \nabla_{\delta B},
\end{align*}
which will appear repeatedly throughout the paper.

\subsection*{Infinitesimal Deformations}

For a smooth family of K\"ahler structures, we can take its derivative
along a vector field $V$ on $\mathcal{T}$ to obtain a map
\begin{align*}
  V[J] \colon \mathcal{T} \to C^\infty(M, \End(TM)).
\end{align*}
Differentiating the identity $J^2 = - \Id$, we see that $V[J]$ and $J$
anti-commute,
\begin{align}
  \label{eq:19}
  V[J]J + J V[J] = 0,
\end{align}
so $V[J]_\sigma$ interchanges types on the K\"ahler
manifold $M_\sigma$. Therefore, it splits as
\begin{align}
  \label{eq:20}
  V[J] = V[J]' + V[J]'',
\end{align}
where $V[J]'_\sigma \in C^\infty(M, T'\!M_\sigma \otimes
T''\!M^*_\sigma)$ and $V[J]''_\sigma \in C^\infty(M, T''\!M_\sigma
\otimes T'\!M^*_\sigma)$ is its conjugate. Notice that this splitting
occurs for any infinitesimal deformation of an almost complex
structure on $M$ and defines an almost complex structure on the space
of almost complex structures on $M$.

Differentiating the integrability condition on $J$, expressed through
the vanishing of the Nijenhuis tensor, reveals that $V[J]' \in
\Omega^{0,1}(M, T'\!M)$ satisfies the holomorphicity condition $\bar
\d V[J]' = 0$, and the associated cohomology class in $H^1(M, T'\!M)$
is the Kodaira-Spencer class of the deformation (see \cite{MR815922}).

Define a bivector field $\tilde G(V) \in C^\infty(M, TM_\setC \otimes
TM_\setC)$ by the relation
\begin{align*}
  V[J] = \tilde G(V) \trdot \omega,
\end{align*}
for any vector field $V$ on $\mathcal{T}$. Differentiating the
identity $\tilde g = - J \trdot \tilde \omega $ along $V$, we get
\begin{align}
  \label{eq:15}
  V[\tilde g] = - V[J] \trdot \tilde \omega = - \tilde G(V),
\end{align}
and since $\tilde g$ is symmetric, this implies that $\tilde G(V)$ is
a symmetric bivector field. Furthermore, the combined types of $V[J]$
and $\omega$ yield a decomposition,
\begin{align*}
  \tilde G(V) = G(V) + \bar G(V),
\end{align*}
where $G(V)_\sigma \in C^\infty(M, S^2(T'\!M_\sigma))$ and $\bar
G(V)_\sigma \in C^\infty(M, S^2(T''\!M_\sigma))$. In other words, the
real symmetric bivector field $\tilde G(V)$ has no (1,1)-part.  The
variation of the K\"ahler metric is obtained by differentiating the
identity $g = \omega \trdot J$, which yields
\begin{align*}
  V[g] = \omega \trdot V[J] = \omega \trdot \tilde G(V) \trdot \omega
  = g \trdot \tilde G(V) \trdot g.
\end{align*}
We shall also need the variation of the Levi-Civita connection, which
is the tensor field
\begin{align*}
  V[\lcc] \in C^\infty(M, S^2(TM^*) \otimes TM)
\end{align*}
given by (see \cite{MR867684} Theorem 1.174)
\begin{align*}
  2g( V[\lcc]_{X} Y, Z) = \nabla_{X} (V[g])(Y, Z) + \nabla_{Y}
  (V[g])(X, Z) - \nabla_{Z} (V[g]) (X, Y),
\end{align*}
for any vector fields $X, Y$ and $Z$ on $M$. In index notation, this
translates to
\begin{align}
  \label{eq:8}
  2V[\lcc]_{ab}^c = \nabla_a \tilde G(V)^{cu} g_{ub} + g_{au} \nabla_b
  \tilde G(V)^{uc} - g_{au} \tilde g^{cw} \nabla_w \tilde G(V)^{uv}
  g_{vb},
\end{align}
and we remark that the trace $V[\lcc]_{xb}^x$ of this tensor
vanishes. Indeed, we get that
\begin{align}
  \label{eq:48}
  V[\lcc]_{xb}^x = \nabla_x \tilde G(V)^{xu} g_{ub} + g_{xu} \nabla_b
  \tilde G(V)^{ux} - g_{xu} \tilde g^{xw} \nabla_w \tilde G(V)^{uv}
  g_{vb} = 0,
\end{align}
where the first and last term cancel, and the middle term vanishes
because $\tilde G(V)$ has no part of type (1,1), which is the type of
the metric. 

We will also need to know the variation of the Ricci curvature, which
will result from the Bianchi identity of a certain line bundle
associated with the family of complex structures.

\subsection*{The Canonical Line Bundle of a Family}

For a family $J$ of K\"ahler structures, we can consider the vector bundle,
\begin{align*}
  \hat T'\!M \to \mathcal{T} \times M,
\end{align*}
with fibers $\hat T'\!M_{(\sigma, p)} = T'_pM_\sigma$ given by the
holomorphic tangent spaces of $M$. Throughout the paper, we shall
generally use a hat in the notation to indicate that we are working
over the product $\mathcal{T} \times M$. Following this convention,
the exterior differential on $\mathcal{T} \times M$ is denoted by
$\hat d$, whereas the differential on $\mathcal{T}$ is denoted by
$d_{\scriptscriptstyle \mathcal{T}}$ and by $d$ on $M$.

The K\"ahler metric induces a Hermitian structure $\hat h^{T'\!M}$ on
$\hat T'\!M$, and the Levi-Civita connection gives a compatible
partial connection along the directions of $M$. We can extend this
partial connection to a full connection $\hat \nabla^{T'\!M}$ on $\hat
T'\!M$ in the following way. If $Z \in C^\infty(\mathcal{T} \times M,
\hat T'\!M)$ is a smooth family of sections of the holomorphic tangent
bundle, and $V$ is a vector field on $\mathcal{T}$, then we define
\begin{align*}
  \hat \nabla_V Z = \pi^{1,0} V[Z].
\end{align*}
In other words, we regard $Z$ as a smooth family of sections of the
complexified tangent bundle $TM_\setC$, and then we simply
differentiate $Z$ along $V$ in this bundle, which does not depend on
the point in $\mathcal{T}$, and project the result back onto the
holomorphic tangent bundle.

Clearly, the connection $\hat \nabla^{T'\!M}$ preserves the
Hermitian structure in the directions of $M$, since it is induced by
the Levi-Civita connection. Moreover, if $V$ is a vector field on
$\mathcal{T}$, and $X$ and $Y$ are sections of $\hat T'\!M$, we get
that
\begin{align*}
  V[\hat h^{T'\!M}(X,Y)] = V[g(X,\bar Y)] &= V[g](X, \bar Y) + g(V[X],
  \bar Y) + g(X, \overline{V[Y]}) \\ &= h(\hat \nabla_V X, Y) + h(X,
  \hat \nabla_V Y),
\end{align*}
since the $(1,1)$-part of $V[g]$ vanishes. It follows that $\hat
\nabla^{T'\!M}$ preserves the Hermitian structure on $\hat T'\!M$.

Now consider the line bundle
\begin{align*}
  \hat K = \bwedge^m \hat T'\!M^* \to \mathcal{T} \times M,
\end{align*}
which will be referred to as the canonical line bundle of the family
of K\"ahler structures. As usual, the Hermitian structure and
connection on $\hat T'\!M$ induce a Hermitian structure $\hat h^K$ and
a compatible connection $\hat \nabla^K$ on $\hat K$.
The curvature of $\hat \nabla^K$ was calculated in \cite{MR2928088}
and will be recalled below, but before stating it, we introduce the
following important notation. For any vector fields $V$ and $W$ on
$\mathcal{T}$, we define $\Theta \in \Omega^2(\mathcal{T}, S^2(TM))$
by
\begin{align*}
  \Theta(V,W) = \Sym(\tilde G(V) \trdot \omega \trdot \tilde G(W)),
\end{align*}
where $\Sym$ denotes symmetrization. We also give a name to the
metric trace of the symmetric bivector field $\Theta(V,W)$ and define
\begin{align}
  \label{eq:22}
  \begin{aligned}
    \theta(V, W) &= -\frac{1}{4} g (\Theta(V,W)) = -\frac{1}{4} g_{uv}
    \Theta(V,W)^{uv}.
  \end{aligned}
\end{align}
Clearly, this defines a real two-form $\theta \in
\Omega^2(\mathcal{T}, C^\infty(M))$ on $\mathcal{T}$ with values in
smooth functions on $M$. 

We note that the two-form $\Theta$ over $\mathcal{T}$ is
exact. To see this, we take the variation of $G(V) =
\pi^{2,0} ( \tilde G(V) ) = (\pi^{1,0} \sotimes \pi^{1,0}) \tilde
G(V)$ along $W$ to get
\begin{align}
  \label{eq:5}
  \begin{aligned}
    2W[G(V)] &= i G(V) \trdot \omega \trdot \bar G(W) - i \bar G(W)
    \trdot \omega \trdot G(V) - \pi^{2,0}( WV[\tilde g]) \\ &= 2i
    \Sym(G(V) \trdot \omega \trdot \bar G(W)) - \pi^{2,0}( WV[\tilde
    g]),
  \end{aligned}
\end{align}
which in turn shows that
\begin{align}
  \label{eq:3}
  V[G(W)] - W[G(V)] 
  &= -i \Sym(\bar G(V) \trdot \omega \trdot G(W)) - i\Sym(G(V)
  \trdot \omega \trdot \bar G(W)) = -i \Theta (V,W),
\end{align}
for commuting
vector fields $V$ and $W$ on $\mathcal{T}$. This can be rephrased as
\begin{align}
  \label{eq:44}
  d_{\scriptscriptstyle \mathcal{T}} G = - i\Theta, 
\end{align}
where $G$ is viewed as a one-form in $\Omega^1(\mathcal{T}, S^2(T'\!M))$.

The following proposition gives the curvature of the canonical line
bundle of a family of K\"ahler structures and is proved in
\cite{MR2928088}.
\begin{proposition}
  \label{prop:6}
  The curvature of $\hat \nabla^K$ is given by
  \begin{align*}
    &F_{\hat \nabla^K}(X,Y) = i\rho(X, Y), \qquad F_{\hat \nabla^K}(V,
    X) = \frac{i}{2} \delta \tilde G(V) \trdot \omega \trdot X, \qquad
    F_{\hat \nabla^K}(V, W) = i \theta(V, W),
 \end{align*}
  for any vector fields $X, Y$ on $M$ and $V, W$ on $\mathcal{T}$.
\end{proposition}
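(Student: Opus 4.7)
The strategy I would pursue is to reduce the computation to the curvature of the rank-$m$ bundle $\hat T'\!M$, since $\hat K = \bwedge^m \hat T'\!M^*$ entails $F_{\hat\nabla^K} = - \Tr F_{\hat\nabla^{T'\!M}}$, with the trace taken over the holomorphic tangent fiber. I would then evaluate the commutator $[\hat\nabla_U, \hat\nabla_{U'}]$ on a local section $Z$ of $\hat T'\!M$ in the three bidegrees of $\Omega^2(\mathcal{T} \times M)$ separately, and take the fiber trace at the end.

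The pure $M$-direction case $(X,Y)$ is classical: restricted to a fixed $\sigma \in \mathcal{T}$, the connection $\hat\nabla^{T'\!M}$ is the Chern/Levi-Civita connection on $T'\!M_\sigma$, whose curvature has trace equal to $-i\rho$ (that is, $-2\pi i$ times the first Chern form of $T'\!M_\sigma$). Combined with the sign from dualization and taking top exterior power, this yields $F_{\hat\nabla^K}(X,Y) = i\rho(X,Y)$. For the mixed case $(V,X)$, I would use that $[V,X]=0$ for vector fields lifted from the two factors, so that $F_{\hat\nabla^{T'\!M}}(V,X)Z = \hat\nabla_V (\nabla_X Z) - \nabla_X (\pi^{1,0} V[Z])$, expand $\pi^{1,0} = \tfrac12(\Id - iJ)$ and commute $V[\cdot]$ past the Levi-Civita derivative to produce the variation $V[\lcc]$ and a term in $V[J] = \tilde G(V) \trdot \omega$. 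The trace of $V[\lcc]$ vanishes by \eqref{eq:48}, so only the $V[J]$ piece survives; carefully tracking types (since $V[J]$ swaps types and $\pi^{1,0}$ projects) and using $\nabla J = 0$, the trace over $Z$ should collapse to $\tfrac{i}{2}\, g_{ab}\nabla_c \tilde G(V)^{ab} \cdot \omega$-contracted-with-$X$, which is exactly $\tfrac{i}{2}\delta\tilde G(V) \trdot \omega \trdot X$.

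For the pure $\mathcal{T}$-direction case $(V,W)$, I would similarly expand
\[
F_{\hat\nabla^{T'\!M}}(V,W)Z = \pi^{1,0} V[\pi^{1,0} W[Z]] - (V \leftrightarrow W),
\]
and write $V[\pi^{1,0}] = -\tfrac{i}{2} V[J]$. Since $V[J]$ and $W[J]$ each swap types, the only surviving contributions are of the form $\pi^{1,0}\, V[J]\, W[J]\, Z$ (minus the symmetric term), which by $V[J]=\tilde G(V)\trdot\omega$ becomes a fiber endomorphism $\tilde G(V)\trdot\omega\trdot\tilde G(W)\trdot\omega$ acting on $T'\!M$. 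Antisymmetrizing in $(V,W)$ produces the symmetrization $\Theta(V,W) = \Sym(\tilde G(V)\trdot\omega\trdot\tilde G(W))$; taking the trace of the associated endomorphism and collecting the factors of $-\tfrac{i}{2}$, the sign from dualization and $\bwedge^m$, and the definition \eqref{eq:22} of $\theta$ as $-\tfrac14$ times the metric trace of $\Theta$, one lands on $F_{\hat\nabla^K}(V,W) = i\theta(V,W)$.

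The main obstacle I anticipate is not any single estimate but the careful bookkeeping of projections $\pi^{1,0}, \pi^{0,1}$ and partial connections in the mixed case $(V,X)$: one must commute $\nabla_X$ with $\pi^{1,0}V[\cdot]$, separate the contribution of $V[\nabla]$ from that of $V[J]$, and verify that precisely the combination $\delta \tilde G(V) \trdot \omega \trdot X$ emerges after tracing. This is where the relation \eqref{eq:8} for $V[\lcc]$, its vanishing trace \eqref{eq:48}, and the type decomposition of $\tilde G(V)$ must all be used in concert. Given these identities are available from the preceding subsection, the remainder is a direct but careful index computation.
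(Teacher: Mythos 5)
The paper itself contains no proof of this proposition---it is imported verbatim from \cite{MR2928088}---so there is nothing internal to compare against. Your overall strategy is certainly the intended one: reduce to $F_{\hat\nabla^K} = -\Tr F_{\hat\nabla^{T'\!M}}$ and compute the curvature of $\hat T'\!M$ in the three bidegrees. Your treatment of the $(X,Y)$ component (standard K\"ahler geometry) and of the $(V,W)$ component (only the $V[\pi^{1,0}]\,W[\pi^{1,0}]$ cross-terms survive, producing $\Theta(V,W)$ whose $J$-weighted trace is $\theta$ via \eqref{eq:22}) is correct in outline.

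The mixed component, however, is wrong as you describe it, and in a way that would derail the computation rather than merely complicate it. For commuting lifts one finds $F_{\hat\nabla^{T'\!M}}(V,X)Z = \pi^{1,0}\bigl(V[\lcc]_X Z\bigr)$: the terms $\pi^{1,0}\nabla_X V[Z]$ and $\nabla_X\pi^{1,0}V[Z]$ cancel outright because $\lcc J = 0$, so there is no residual ``$V[J]$ piece''---the \emph{entire} answer is the partial trace of $V[\lcc]_X$ over $T'\!M$. That partial trace equals $-\tfrac{i}{2}\,V[\lcc]^{x}_{au}J^u_x$ once the full trace is removed, and it does \emph{not} vanish: \eqref{eq:48} kills only the full trace over $TM$, and it is precisely the $J$-contracted remainder of \eqref{eq:8} (equivalently, the restriction of the traced indices to $T'\!M$) that yields $\tfrac{i}{2}\,\delta\tilde G(V)\trdot\omega$. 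Discarding the $V[\lcc]$ contribution, as you propose, leaves nothing. Relatedly, the expression $g_{ab}\nabla_c\tilde G(V)^{ab}$ you write for the divergence is identically zero, since $\tilde G(V)$ has no $(1,1)$-part while $g$ is of type $(1,1)$; the divergence is $\delta\tilde G(V)^{b} = \nabla_a\tilde G(V)^{ab}$, contracting the derivative index against a slot of $\tilde G(V)$. With these two points corrected the computation does close up and gives $\tfrac{i}{2}\,\delta\tilde G(V)\trdot\omega\trdot X$, but as written the mixed case does not go through.
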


By applying the Bianchi identity to the connection $\hat \nabla^K$,
and using the formulas of \Fref{prop:6}, we get three useful results.
The first is the fact that the two-form $\theta \in
\Omega^2(\mathcal{T}, C^\infty(M))$ is closed, which is a trivial
reformulation of the Bianchi identity for three vector fields on
$\mathcal{T}$. By applying the
Bianchi identity to two vector fields $V$ and $W$ on $\mathcal{T}$,
and one vector field on $M$, we get
\begin{align}
  \label{eq:9876}
  d\theta(V, W) 
  = \smash{\frac{1}{2} W[\delta \tilde G(V)] \trdot
    \omega - \frac{1}{2} V[\delta \tilde G(W)] \trdot \omega}.
\end{align}
Finally, the Bianchi identity for two vector fields on $M$ and one on
$\mathcal{T}$ gives following important formula
for the variation of the Ricci form,
\begin{align}
  \label{eq:1234}
  V[\rho] = \smash{\frac{1}{2} d (\delta\tilde G(V) \trdot \omega),}
\end{align}
for any vector field $V$ on $\mathcal{T}$.  As an immediate
consequence of \eqref{eq:1234}, we get the following simple formula
for the variation of the scalar curvature
\begin{align}
  \label{eq:10}
  \begin{aligned}
    V[s] &= V[\rho]_{uv} \tilde \omega^{vu} = \frac{1}{2} \nabla_u
    (\delta \tilde G(V))^w \omega_{wv} \tilde \omega^{vu} -
    \frac{1}{2} \nabla_v (\delta \tilde G(V))^w \omega_{wu} \tilde
    \omega^{vu}= \delta \delta \tilde G(V),
  \end{aligned}
\end{align}
for any vector field $V$ on $\mathcal{T}$.

\subsection*{Holomorphic Families of K\"ahler Structures}

In case the manifold $\mathcal{T}$ is itself a complex manifold, we
can require the family $J$ to be a holomorphic map from $\mathcal{T}$
to the space of complex structures. This is made precise by the
following definition, which uses the splitting \eqref{eq:20} of
$V[J]$.

\begin{definition}
  \label{def:3}
  Suppose that $\mathcal{T}$ is a complex manifold, and that $J$ is a
  family of complex structures on $M$, parametrized by
  $\mathcal{T}$. Then $J$ is \emph{holomorphic} if
  \begin{align*}
    V'[J] = V[J]' \qquad \text{and} \qquad V''[J] = V[J]'',
  \end{align*}
  for any vector field $V$ on $\mathcal{T}$.
\end{definition}

If $I$ denotes the integrable almost
complex structure on $\mathcal{T}$ induced by its complex structure,
then we get an almost complex structure $\hat J$ on $\mathcal{T}
\times M$ defined by
\begin{align*}
  \hat J(V \oplus X) = IV \oplus J_\sigma X, \qquad V \oplus X \in
  T_{(\sigma,p)} (\mathcal{T} \times M).
\end{align*}
The following proposition gives another characterization of
holomorphic families \cite{MR2928088}.
\begin{proposition}
  \label{prop:9}
  The family $J$ is holomorphic if and only if $\hat J$ is integrable.
\end{proposition}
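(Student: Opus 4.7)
The plan is to compute the Nijenhuis tensor
\begin{equation*}
  N_{\hat J}(\xi, \eta) = [\hat J \xi, \hat J \eta] - \hat J[\hat J \xi, \eta] - \hat J[\xi, \hat J \eta] - [\xi, \eta]
\end{equation*}
and identify its vanishing with the condition of \Fref{def:3}. Since $N_{\hat J}$ is tensorial, it suffices to test it on a local frame of $\mathcal{T} \times M$ consisting of vector fields pulled back from the two factors; brackets of pullbacks from different factors then vanish outright.

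For two vector fields $X, Y$ pulled back from $M$, both $\hat J X = JX$ and $\hat J Y = JY$ remain tangent to the $M$-factor, so the calculation reduces fiberwise to the Nijenhuis tensor of $J_\sigma$, which vanishes by integrability of each $J_\sigma$. For two vector fields pulled back from $\mathcal{T}$, the same reasoning reduces $N_{\hat J}$ to the Nijenhuis tensor of $I$, which vanishes since $(\mathcal{T}, I)$ is complex.

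The heart of the matter is the mixed case. For $V$ pulled back from $\mathcal{T}$ and $X$ pulled back from $M$, the only nontrivial brackets come from the $\sigma$-dependence of $JX$, and a direct calculation gives $[V, JX] = V[J] X$ and $[IV, JX] = IV[J] X$; hence
\begin{equation*}
  N_{\hat J}(V, X) = \bigl(IV[J] - J \circ V[J]\bigr) X.
\end{equation*}
Writing $IV = i(V' - V'')$ yields $IV[J] = i(V'[J] - V''[J])$, and combining the type decomposition $V[J] = V[J]' + V[J]''$ with the fact, noted after \eqref{eq:19}, that $V[J]'$ and $V[J]''$ exchange $T'\!M$ and $T''\!M$ gives $J \circ V[J] = i V[J]' - i V[J]''$. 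Therefore $N_{\hat J}(V, X)$ vanishes for all $X$ if and only if $V'[J] - V''[J] = V[J]' - V[J]''$, and together with the tautology $V'[J] + V''[J] = V[J] = V[J]' + V[J]''$ this is equivalent to the pair of identities $V'[J] = V[J]'$ and $V''[J] = V[J]''$, which is precisely the holomorphicity condition.

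The main, rather modest, obstacle is keeping two distinct decompositions of $V[J]$ straight: one is indexed by the $(1,0)/(0,1)$ splitting of the vector field $V$ on $\mathcal{T}$ (giving $V'[J]$ and $V''[J]$), the other by the type splitting of endomorphisms of $TM$ (giving $V[J]'$ and $V[J]''$); holomorphicity of the family is precisely the statement that these two decompositions agree. Once this is clearly separated from the bracket bookkeeping, the equivalence is a short Nijenhuis computation.
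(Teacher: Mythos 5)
Your proof is correct: the Nijenhuis computation on a product frame, with the mixed term $N_{\hat J}(V,X) = \bigl((IV)[J] - J\circ V[J]\bigr)X$ reducing holomorphicity to the agreement of the two decompositions of $V[J]$, is exactly the standard argument. The paper itself offers no proof but defers to \cite{MR2928088}, where the verification proceeds along essentially these same lines, so there is nothing to add.
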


By this proposition, a holomorphic family induces a complex structure
on the product manifold $\mathcal{T} \times M$. Clearly, the
projection $\pi_{\scriptscriptstyle \mathcal{T}} \colon \mathcal{T}
\times M \to \mathcal{T}$ is a holomorphic map, and its differential
is the projection $d\pi_{\scriptscriptstyle \mathcal{T}} \colon \hat
T' \mathcal{T} \oplus \hat T'\!M \to T'\mathcal{T}$, where $\hat
T'\mathcal{T}$ is the pullback of $T'\mathcal{T}$ by
$\pi_{\scriptscriptstyle \mathcal{T}}$. Since the bundle $\hat T'\!M$
over $\mathcal{T} \times M$ is the kernel of this map, it has the
structure of a holomorphic vector bundle, and it is easily verified
that the connection $\hat \nabla^{T'\!M}$ is compatible with this
holomorphic structure. Since the connection also preserves the
Hermitian structure, it must be the Chern connection.

Holomorphicity has several useful consequences. First of all, it implies
that
\begin{align*}
  \tilde G(V') = V'[J] \trdot \tilde \omega = V[J]' \trdot \tilde
  \omega = G(V),
\end{align*}
and similarly $\tilde G(V'') = \bar G(V)$. This means that $G$,
viewed as a one-form over the complex manifold $\mathcal{T}$, has type
(1,0) and that $\Theta$ and $\theta$ have type (1,1). These important
facts will be used without reference going forward.

Finally, for commuting vector fields $V'$ and $W''$, the identity
\eqref{eq:3} reduces to
\begin{align}
  \label{eq:43}
  W''[G(V)] = \frac{i}{2} G(V) \trdot \omega \trdot \bar G(W) -
  \frac{i}{2} \bar G(W) \trdot \omega \trdot G(V)
  = i \Theta(V',W'').
\end{align}
This expression for the second-order variation of the complex
structure will prove very useful in later calculations, but we
emphasize the fact that it only holds for commuting vector fields.

\subsection{Rigid Families of K\"ahler Structures}

The following rather serious assumption on a family of K\"ahler
structures turns out to be crucial to the construction of the Hitchin
connection as well as the calculation of its curvature.
\begin{definition}
  \label{def:4}
  A family of K\"ahler structures is called \emph{rigid} if
  \begin{align}
    \label{eq:28}
    \nabla_{X''} G(V) = 0,
  \end{align}
  for all vector fields $V$ on $\mathcal{T}$ and $X$ on $M$.
\end{definition}

In other words, the family $J$ is rigid if $G(V)$ is a holomorphic
section of $S^2(T'\!M)$, for any vector field $V$ on
$\mathcal{T}$. For examples of rigid families in a basic setting, we
refer to \cite{MR2928088}.  By differentiating the rigidity condition
\eqref{eq:28} along $\mathcal{T}$, we get the following crucial
result.
\begin{proposition}
  \label{prop:14}
  Any rigid family of K\"ahler structures satisfies the symmetry property
  \begin{align}
    \label{eq:83}
    \Sym (G(V) \trdot \nabla G(W)) = \Sym (G(W) \trdot \nabla G(V)),
  \end{align}
  for any vector fields $V$ and $W$ on $\mathcal{T}$.
\end{proposition}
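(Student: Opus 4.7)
The plan is to derive the symmetry by differentiating the rigidity condition \eqref{eq:28} along a second vector field on $\mathcal{T}$ and symmetrizing the resulting identity. Let $V,W$ be commuting vector fields on $\mathcal{T}$, and apply $W[\,\cdot\,]$ to the rigidity condition written in the form $\pi^{0,1}{}^d_c\,\nabla_d G(V)^{ab}=0$. Using $W[\pi^{0,1}]=\tfrac{i}{2}W[J]$ together with the variation formula \eqref{eq:8} for the Levi-Civita connection, I obtain
\begin{align*}
\tfrac{i}{2}W[J]^d_c\,\nabla_d G(V)^{ab}+\pi^{0,1}{}^d_c\bigl[\nabla_d W[G(V)]^{ab}+W[\nabla]_{de}^{a}G(V)^{eb}+W[\nabla]_{de}^{b}G(V)^{ae}\bigr]=0.
\end{align*}

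Next I contract with the inverse metric $g^{c''v'}$ to raise the projected $c$-index to a $(1,0)$-index $v'$. Using $W[J]=\tilde G(W)\trdot\omega$ together with $\omega_{u'c''}=ig_{u'c''}$, the first term collapses to $-\tfrac{1}{2}G(W)^{dv'}\nabla_d G(V)^{ab}$, which is precisely the contraction appearing in \eqref{eq:83}. The $W[\nabla]$-terms are analyzed via a type decomposition of the three summands in \eqref{eq:8}: the third summand vanishes because it forces $\tilde G(W)$ to pair indices of mixed type $(1,0)(0,1)$; the first summand contributes only to the $(0,1)$-component of the free upper index and is therefore killed by the eventual symmetrization to $(1,0)$-indices; only the middle summand survives, yielding, after rigidity, the contribution $\tfrac{1}{2}\nabla_{e'}G(W)^{v'a'}G(V)^{e'b'}$ (and its $a\leftrightarrow b$ counterpart) after contraction with $g^{d''v'}$.

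Now I swap $V\leftrightarrow W$ and subtract. The difference $\nabla(W[G(V)]-V[G(W)])$ equals, by \eqref{eq:3}, $i\,\nabla\Theta(V,W)$; since $\Theta$ is of pure type $(1,1)$, its $(2,0)$-projection vanishes, and the whole contribution therefore dies after symmetrization over the three $(1,0)$-indices $(v',a',b')$. The remaining difference from the $W[\nabla]$-pieces, after the same symmetrization, yields exactly $\Sym(G(V)\trdot\nabla G(W))-\Sym(G(W)\trdot\nabla G(V))$. Writing $Y:=\Sym(G(W)\trdot\nabla G(V))-\Sym(G(V)\trdot\nabla G(W))$, the symmetrized difference reads $\tfrac{1}{2}Y=-Y$, which forces $Y=0$, establishing \eqref{eq:83}.

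The main obstacle is the careful bookkeeping of type decompositions: essentially every tensor appearing is of mixed $(p,q)$-type, and rigidity must be invoked repeatedly to eliminate $(0,1)$-derivatives of $G$ and to collapse the three-term variation formula \eqref{eq:8} to its surviving middle piece. The $\Theta$-contribution looks as though it could spoil the symmetry, but is killed harmlessly by the purity of its $(1,1)$-type once all three free indices are projected to $(1,0)$.
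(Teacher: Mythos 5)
Your proof is correct and follows essentially the same route as the paper's: both differentiate the rigidity condition \eqref{eq:28} along a second (commuting) direction on $\mathcal{T}$ and control the resulting terms with the variation formulas for $\pi^{0,1}$, for $G$, and for the Levi-Civita connection \eqref{eq:8}, the key point in each case being that the second-order variation $WV[\tilde g]$ is symmetric in $V$ and $W$. The only difference is in the endgame --- the paper exhibits $3\Sym(G(V) \trdot \nabla G(W))$ as a manifestly $V \leftrightarrow W$-symmetric expression, whereas you antisymmetrize in $V,W$ first and dispose of the $\nabla_{a''}W[G(V)]$ term via \eqref{eq:3} and the vanishing of the $(2,0)$-part of $\Theta(V,W)$ --- which is a cosmetic repackaging of the same cancellation.
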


\begin{proof}
  Throughout the proof, let $V$ and $W$ be commuting vector fields on
  $\mathcal{T}$. By differentiaing the holomorphicity condition on
  the bivector field $G(V)$ along $W$, we obtain
  \begin{align}
    \label{eq:1}
    \begin{aligned}
      0 &= W[\nabla_{a''} G(V)^{bc}] \\ &=
      W[\pi^{0,1}]^u_a\nabla_{u} G(V)^{bc} + \nabla_{a''}
      W[G(V)]^{bc} \\ & \qquad + W[\nabla]^b_{a''u} G(V)^{uc} +
      W[\nabla]^c_{a''u} G(V)^{bu}.
    \end{aligned}
  \end{align}
  Let us work out each of these terms individually. Using rigidity, the
  first term reduces to
  \begin{align*}
    2W[\pi^{0,1}]^u_a\nabla_{u} G(V)^{bc} = i \tilde G(W)^{uv}
    \omega_{va} \nabla_u G(V)^{bc} = - g_{av} G(W)^{uv} \nabla_u
    G(V)^{bc}.
  \end{align*}
  For the second term of \eqref{eq:1}, we simply apply \eqref{eq:5}
  and rigidity to obtain
  \begin{align*}
    2\nabla_{a''} &(W[G(V)]^{bc}) \\ &= - \nabla_a (\bar G(W)^{bu})
    g_{uv} G(V)^{vc} - G(V)^{bu} g_{uv} \nabla_a (\bar G(W)^{vc}) -
    \nabla_{a''}( (\pi^{2,0})^{bc}_{uv} WV[\tilde g^{uv}]).
  \end{align*}
  Finally, for the last two terms of \eqref{eq:1}, we can apply
  \eqref{eq:8} in combination with rigidity of the family to obtain
  \begin{align*}
    2W[\nabla]^b_{a''u} G(V)^{uc} = g_{av}\nabla_u (G(W)^{vb})
    G(V)^{uc} + \nabla_a (\bar G(W)^{bv}) g_{vu} G(V)^{uc}.
  \end{align*}
  Substituting these expressions into \eqref{eq:1}, and cancelling
  terms, we get that
  \begin{align*}
    0 &= g_{av}\nabla_u (G(W)^{vb}) G(V)^{uc} + g_{av}\nabla_u
    (G(W)^{vc}) G(V)^{ub} - g_{av} G(W)^{uv} \nabla_u G(V)^{bc} \\ &
    \quad - \nabla_{a''}( (\pi^{2,0})^{bc}_{uv} WV[\tilde g^{uv}]).
  \end{align*}
  Raising an index, this implies that
  \begin{align*}
    3\Sym (&G(V) \trdot \nabla G(W))^{abc} \\ &= G(V)^{au} \nabla_u
    G(W)^{bc} + G(W)^{au} \nabla_u G(V)^{bc} +\tilde
    g^{aw}\nabla_{w''}( (\pi^{2,0})^{bc}_{uv} WV[\tilde g^{uv}]),
  \end{align*}
  and clearly this is symmetric in $V$ and $W$ because these were
  chosen to commute. This proves the proposition.
\end{proof}

Repeatedly taking the divergence of the identity \eqref{eq:83} yields
similar identities for contravariant tensors of lower degree. These
will play a crucial role in the calculation of the curvature of the
Hitchin connection, so we introduce special notation for them.
Inspired by \Fref{prop:14}, we define a two-form $\Gamma_3$ on
$\mathcal{T}$, with values in sections of the third symmetric power of
the holomorphic tangent bundle over $M$, by
\begin{align*}
  \Gamma_3(V,W) = \Sym (G(V) \trdot \nabla G(W)),
\end{align*}
for any vector fields $V$ and $W$ on $\mathcal{T}$. Then
\Fref{prop:14} ensures that $\Gamma_3$ actually defines a symmetric
two-form on $\mathcal{T}$. 

Clearly, the symmetry of the two-form over
$\mathcal{T}$ is not affected by taking the divergence of the
tri-vector field part of $\Gamma_3$ over $M$. In other words, $\delta
\Gamma_3$ defines a symmetric two-form on $\mathcal{T}$ with values in
the second symmetric power of the holomorphic tangent bundle on
$M$. It is given by
\begin{align*}
  3 \delta \Gamma_3(V,W) &= \Delta_{G(V)} G(W)+ 2 \Sym( G(V) \trdot
  \nabla \delta G(W)) + 2 \Sym(\nabla_u G(V)^{av} \nabla_v G(W)^{ub}),
\end{align*}
for any vector fields $V$ and $W$ on $\mathcal{T}$. In contrast to
the first two terms of this expression, the last term is obviously
symmetric in $V$ and $W$. This leads us to define $\Gamma_2 \in
\Omega^2(\mathcal{T}, C^\infty(M, S^2(T'M))$ by
\begin{align*}
  \Gamma_2(V,W) = \Delta_{G(V)} G(W) + 2
  \Sym( G(V) \trdot \nabla \delta G(W)),
\end{align*}
which is also a symmetric two-form on $\mathcal{T}$ and encodes the
interesting part of the symmetry statement for $\delta \Gamma_3$.

\pagebreak[1] Repeatedly taking the divergence, and removing obviously symmetric
parts, we get the following important proposition
\begin{proposition}
  \label{prop:2}
  The four two-forms $\Gamma_j$ on $\mathcal{T}$, with values in
  symmetric contravariant tensors on $M$, defined by
  \begin{align*}
    \Gamma_3(V,W) &= \Sym (G(V) \trdot \nabla G(W)) \\
    \Gamma_2(V,W) &= 2 \Sym( G(V) \trdot \nabla \delta G(W)) +
    \Delta_{G(V)} G(W) \\
    \Gamma_1(V,W) &= 2\Delta_{G(V)} \delta G(W) + G(V) \trdot
    d\delta\delta G(W) + \nabla_w(G(V)^{uv}) \nabla^2_{uv}(G(W)^{wa}) \\
    \Gamma_0(V,W) &= \Delta_{G(V)} \delta \delta G(W) + \nabla_w
    (G(V)^{uv}) \nabla^2_{uv} \delta G(W)^{w},
  \end{align*}
  are all symmetric in the vector fields $V$ and $W$ on $\mathcal{T}$.
\end{proposition}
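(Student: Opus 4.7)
My plan is to establish the four symmetry statements inductively, starting from \Fref{prop:14} (which is precisely the symmetry of $\Gamma_3$) and then taking successive divergences over $M$, at each step peeling off terms that are manifestly symmetric in the vector fields $V$ and $W$ by index relabeling. Throughout, let $V$ and $W$ be commuting vector fields on $\mathcal{T}$; the commutation assumption costs nothing since the statement is tensorial.

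First, the step from $\Gamma_3$ to $\Gamma_2$ is already performed in the text preceding the proposition: applying Leibniz to compute $3 \delta \Gamma_3(V,W)$ yields the three terms $\Delta_{G(V)} G(W)$, $2\Sym(G(V) \trdot \nabla \delta G(W))$, and $2\Sym\bigl(\nabla_u G(V)^{av} \nabla_v G(W)^{ub}\bigr)$. The last of these is symmetric in $V,W$ by renaming summation indices (using the symmetry of both $G(V)$ and $G(W)$), so the sum of the first two terms, which is $\Gamma_2(V,W)$, is symmetric as claimed. This serves as the template for the remaining steps.

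Next, to obtain the symmetry of $\Gamma_1$, I would compute the divergence $\delta \Gamma_2(V,W)$ by applying Leibniz to each of the two summands in the definition of $\Gamma_2$. Expanding $\delta \Delta_{G(V)} G(W) = \delta(\nabla^2_{G(V)} G(W) + \nabla_{\delta G(V)} G(W))$ and $\delta\, \Sym(G(V) \trdot \nabla \delta G(W))$, one produces terms of three kinds: (i) second-derivative terms in $G(W)$ contracted with $G(V)$, which assemble into $2\Delta_{G(V)} \delta G(W)$ and $G(V) \trdot d\delta\delta G(W)$; (ii) the mixed term $\nabla_w(G(V)^{uv})\nabla^2_{uv}(G(W)^{wa})$, which does not admit an obvious swap; and (iii) products of first derivatives such as $\nabla G(V) \cdot \nabla \delta G(W)$ and $\Sym(\delta G(V) \cdot \nabla G(W))$-type contractions, which are manifestly symmetric in $V,W$ after relabeling the contracted indices. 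Subtracting off the group (iii) from the symmetric expression $\delta\Gamma_2$ leaves precisely $\Gamma_1$, which is therefore symmetric. The same procedure applied to $\delta \Gamma_1$ and combined with commuting covariant derivatives modulo curvature terms (which produce further obviously symmetric contributions since the Riemann tensor is $V,W$-independent) yields the symmetry of $\Gamma_0$.

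The main obstacle will be bookkeeping: in going from $\Gamma_j$ to $\Gamma_{j-1}$ several commutator-of-covariant-derivatives rearrangements are required to bring the non-manifestly-symmetric remainder into the exact shape stated in the proposition. In particular, handling the term $\nabla_w(G(V)^{uv})\nabla^2_{uv}(G(W)^{wa})$ in $\Gamma_1$ and showing that its divergence contributes, after further manifestly symmetric terms are removed, the term $\nabla_w(G(V)^{uv})\nabla^2_{uv}\delta G(W)^w$ in $\Gamma_0$, requires commuting $\nabla^2_{uv}$ past $\delta$ and absorbing the resulting Ricci-type contractions into symmetric leftover terms. Since rigidity $\nabla_{X''} G(V) = 0$ is already baked into \Fref{prop:14}, no additional use of rigidity is needed beyond \Fref{prop:14}; the entire argument is a disciplined iteration of the ``take $\delta$, recognize the manifestly symmetric tail, call the rest $\Gamma_{j-1}$'' step.
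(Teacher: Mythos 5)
Your proposal follows exactly the paper's own argument: the paper derives the symmetry of $\Gamma_3$ from \Fref{prop:14} and then, as it says, obtains the rest by ``repeatedly taking the divergence, and removing obviously symmetric parts,'' carrying out only the $\Gamma_3 \to \Gamma_2$ step explicitly. Your reconstruction of the remaining steps (including the observation that rigidity enters only through \Fref{prop:14}) is consistent with this and is in fact somewhat more detailed than what the paper records.
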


\subsection*{Families of Ricci Potentials}

Suppose that the first Chern class of $(M, \omega)$
is represented by the symplectic form, that is
\begin{align}
  \label{eq:54}
  c_1(M, \omega) = \ccint [ \tfrac{\omega}{2\pi} ],
\end{align}
for some integer $\ccint \in \setZ$.  Now the first Chern class is
also represented by the Ricci form $\frac{\rho}{2\pi}$, so the
difference between the forms $\rho$ and $\ccint \omega$ is exact. A
smooth real function $F \in C^\infty(\mathcal{T} \times M)$, which can
be viewed as a smooth map $F \colon \mathcal{T} \to C^\infty(M)$, is
called a \emph{family of Ricci potentials} if it satisfies
\begin{align}
  \label{eq:23}
  \rho_\sigma = \ccint \omega + 2i \d_\sigma \bar \d_\sigma F_\sigma,
\end{align}
for every point $\sigma \in \mathcal{T}$.

Any two Ricci potentials differ by a global pluriharmonic real
function. If we assume that $H^1(M, \setR)$ vanishes, such a function
is globally the real part of a holomorphic function, so a family of
Ricci potential is then uniquely determined up to a function on
$\mathcal{T}$ if the family of
K\"ahler structures does not allow any non-constant holomorphic
functions on $M$.

The existence of a Ricci potential is clearly a global issue over $M$,
as the local $\d \bar \d$-lemma ensures local existence around any
point on $M$ when \eqref{eq:54} holds. If the manifold $M$ is compact,
the global $\d \bar \d$-lemma from Hodge theory ensures the existence
of a Ricci potential, using the fact that the K\"ahler form $\omega$
is harmonic. In this case, the Ricci potential is certainly unique up to a
function on $\mathcal{T}$, and by imposing zero average over $M$, 
\begin{align}
  \label{eq:24}
  \int_M F \omega^m = 0.
\end{align}
we can fix it uniquely.

The following proposition gives an important identity, involving the
variation of a family of Ricci potentials.

\begin{proposition}
  \label{prop:10}
  Suppose that $M$ is a symplectic manifold with $H^1(M, \setR) = 0$
  and $c_1(M, \omega) = \ccint [\frac{\omega}{2\pi}]$, and let $J$ be a
  holomorphic family of K\"ahler structures on $M$, none of which
  admit non-constant holomorphic functions on $M$. Then
  \begin{align}
    \label{eq:41}
    4i \bar \d V'[F] = \delta G(V) \trdot \omega + 2 dF \trdot G(V)
    \trdot \omega,
  \end{align}
  for any family of Ricci potentials $F$ and any vector field $V$ on
  $\mathcal{T}$.
\end{proposition}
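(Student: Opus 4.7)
The plan is to differentiate the Ricci potential equation $\rho = \ccint \omega + 2i \d \bar \d F$ along the $(1,0)$-part $V'$ of $V$, compare the result with the variation formula \eqref{eq:1234} for the Ricci form, and then pass from a resulting exact two-form identity to the desired one-form identity via a Hodge-theoretic vanishing.

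The first step is to compute how $\bar \d$ depends on the complex structure when applied to the scalar $F$. Writing $\bar \d F = dF \circ \pi^{0,1}$ with $\pi^{0,1} = \tfrac{1}{2}(\Id + iJ)$, differentiation along $V'$ gives $V'[\pi^{0,1}] = \tfrac{i}{2} V'[J]$, and the holomorphicity of the family (\Fref{def:3}) identifies $V'[J]$ with $V[J]' = G(V) \trdot \omega$. Since $V'$ commutes with $d$ on $M$ and $F$ is scalar, this yields
\begin{align*}
V'[\bar \d F] \;=\; \bar \d V'[F] + \tfrac{i}{2}\, dF \trdot G(V) \trdot \omega.
\end{align*}

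Next, using $\d \bar \d F = d\bar \d F$ (because $\bar \d^2 = 0$), I apply $V'$ to the Ricci potential equation. The left side, by \eqref{eq:1234} together with $\tilde G(V') = G(V)$, equals $\tfrac{1}{2} d(\delta G(V) \trdot \omega)$, while the right side becomes $2i\, d\bar \d V'[F] - d(dF \trdot G(V) \trdot \omega)$ using the formula just derived. Rearranging,
\begin{align*}
d\!\left(2i\, \bar \d V'[F] - \tfrac{1}{2}\delta G(V) \trdot \omega - dF \trdot G(V) \trdot \omega\right) \;=\; 0.
\end{align*}
The bracketed expression is a $(0,1)$-form on $M$, and its $d$-closedness entails in particular $\d$-closedness, so its complex conjugate is a holomorphic $(1,0)$-form. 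On the compact K\"ahler manifold $M$, the Hodge decomposition combined with $H^1(M, \setR) = 0$ forces $H^{1,0}(M) = 0$, so there are no nonzero holomorphic one-forms; hence the bracketed form vanishes, and multiplying by $2$ gives the claim.

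The main obstacle is the type bookkeeping in the first step, where one must carefully track the types of $V'[\pi^{0,1}]$, $V'[J]$, and $G(V) \trdot \omega$, and verify that only the $(1,0)$-component $\d F$ survives the contraction with $G(V)$. Once this is in place, the calculation reduces essentially to a straightforward application of the Bianchi-type identity \eqref{eq:1234}, together with the cohomological vanishing argument in the final step.
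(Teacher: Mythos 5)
Your computation is the paper's own up to the point where you obtain a $d$-closed $(0,1)$-form: differentiating $\rho = \ccint \omega + 2i\d\bar\d F$ along $V'$, tracking the variation of $\pi^{0,1}$ via $V'[J] = V[J]' = G(V)\trdot\omega$, and comparing with \eqref{eq:1234} is exactly how the paper proceeds, and your type bookkeeping in that part is correct. The gap is in the final cohomological step. You kill the resulting closed $(0,1)$-form by passing to its conjugate holomorphic one-form and invoking the Hodge decomposition on ``the compact K\"ahler manifold $M$'' to get $H^{1,0}(M)=0$. But compactness is not a hypothesis of the proposition, and the intended application includes the non-compact moduli space $M^s$ of irreducible connections (in the non-coprime case). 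On a non-compact K\"ahler manifold, $H^1(M,\setR)=0$ does not force holomorphic one-forms to vanish, so the argument as written does not close. A telltale sign is that your proof never uses the stated hypothesis that none of the complex structures admit non-constant holomorphic functions.

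The repair is the paper's argument: since the $(0,1)$-form is $d$-closed and $H^1(M,\setR)=0$, it is exact, say equal to $df$; its type forces $\d f = 0$, so $\bar f$ is holomorphic, hence constant by hypothesis, whence $f$ is constant and the form vanishes. This uses only the stated assumptions and requires no compactness.
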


\begin{proof}
  By differentiating the identity \eqref{eq:23} in the direction of
  $V'$, we get
  \begin{align*}
    V'[\rho] & = - d (dF \trdot G(V) \trdot \omega) + 2i \d \bar \d V'[F],
  \end{align*}
  and by using \eqref{eq:1234} on the left-hand side, this yields
  \begin{align*}
    d( \delta G(V) \trdot \omega) + 2d(dF \trdot G(V) \trdot \omega) -
    4i d \bar \d V'[F] = 0.
  \end{align*}
  On one hand, it follows that the one-form
  \begin{align*}
    \delta G(V) \trdot \omega + 2 dF \trdot G(V) \trdot \omega - 4i
    \bar \d V'[F]
  \end{align*}
  is closed, and hence exact by the assumption $H^1(M, \setR) = 0$.  On
  the other hand, it is of type (0,1), so it cannot be exact unless it
  is zero, because we assumed that none of the K\"ahler structures
  admit non-constant holomorphic functions. This proves the lemma.
\end{proof}
Another equivalent form of \eqref{eq:41} is the following,
\begin{align*}
  4i V'[\bar \d F] = \delta G(V) \trdot \omega,
\end{align*}
and if we combine this with \eqref{eq:9876}, for commuting vector
fields $V'$ and $W''$ on $\mathcal{T}$, we get
\begin{align*}
  2d \theta (V', W'') 
  &= W''[\delta G(V) \trdot \omega] - V'[\delta \bar G(W) \trdot
  \omega]
  \\ &= 4i W''V'[\bar \d F] - 4i V'W''[\d F] \\ &= 4i V'W''[d F] \\ &=
  4i \d_{\scriptscriptstyle \mathcal{T}} \bar \d_{\scriptscriptstyle
    \mathcal{T}} F (V',W'').
\end{align*}
Since the family is holomorphic, the form $\theta$ has type (1,1) on
$\mathcal{T}$, so we have shown
\begin{proposition}
  \label{prop:1}
  In the setting of \Fref{prop:10}, any family of Ricci potentials
  satisfies
  \begin{align*}
    \theta - 2i \d_{\scriptscriptstyle \mathcal{T}} \bar \d_{\scriptscriptstyle
    \mathcal{T}} F \in \Omega^{1,1}(\mathcal{T}).
  \end{align*}
  In other words, the form takes values in constant functions on $M$.
\end{proposition}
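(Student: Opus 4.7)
The plan is to repackage the calculation performed immediately before the statement. For commuting vector fields $V', W''$ on $\mathcal{T}$, combining \eqref{eq:9876} with the identity \eqref{eq:41} of \Fref{prop:10} (and its complex conjugate) yields, as an equality of one-forms on $M$,
\[
d\theta(V', W'') = 2i\,d\bigl(\d_{\scriptscriptstyle \mathcal{T}}\bar\d_{\scriptscriptstyle \mathcal{T}} F(V',W'')\bigr).
\]
The final identification uses $V'W''[dF] = d(V'W''[F]) = d\bigl(\d_{\scriptscriptstyle \mathcal{T}}\bar\d_{\scriptscriptstyle \mathcal{T}} F(V', W'')\bigr)$, which is legitimate because $V', W''$, lifted horizontally to $\mathcal{T} \times M$, commute with the $M$-exterior derivative. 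Hence the function $\theta(V',W'') - 2i\,\d_{\scriptscriptstyle \mathcal{T}}\bar\d_{\scriptscriptstyle \mathcal{T}} F(V',W'')$ on $M$ has vanishing $M$-differential, and is therefore constant on the connected manifold $M$.

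Next, I would invoke the fact that both $\theta$ and $\d_{\scriptscriptstyle \mathcal{T}}\bar\d_{\scriptscriptstyle \mathcal{T}} F$ are of type $(1,1)$ on $\mathcal{T}$: the form $\theta$ because the family $J$ is assumed holomorphic, as recorded just after \eqref{eq:43}, and $\d_{\scriptscriptstyle \mathcal{T}}\bar\d_{\scriptscriptstyle \mathcal{T}} F$ by construction. A $(1,1)$-form on $\mathcal{T}$ is completely determined by its evaluations on pairs of commuting $(1,0)$- and $(0,1)$-vector fields, so the constancy statement of the previous paragraph extends from such pairs to the whole form. Consequently, $\theta - 2i\,\d_{\scriptscriptstyle \mathcal{T}}\bar\d_{\scriptscriptstyle \mathcal{T}} F$ takes values in constant functions on $M$ and thus, upon identifying constants with scalars, lies in $\Omega^{1,1}(\mathcal{T})$ as claimed.

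In short, the proposition is essentially a direct rewriting of the identity displayed just before the statement; the real work has already been done in combining \Fref{prop:10} with the Bianchi-type formula \eqref{eq:9876}. The only points requiring a sanity check are the type bookkeeping on $\mathcal{T}$ and the commutation of the horizontally-lifted $\mathcal{T}$-vector fields $V', W''$ with the $M$-differential~$d$, neither of which constitutes a genuine obstacle.
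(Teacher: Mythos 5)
Your proposal is correct and takes essentially the same route as the paper: it combines \eqref{eq:9876} with \eqref{eq:41} (in the equivalent form $4i\,V'[\bar\d F]=\delta G(V)\trdot\omega$ and its conjugate) for commuting $V'$, $W''$ to get $d\bigl(\theta(V',W'')-2i\,\d_{\scriptscriptstyle\mathcal{T}}\bar\d_{\scriptscriptstyle\mathcal{T}}F(V',W'')\bigr)=0$, hence constancy on $M$, and then uses that both $\theta$ and $\d_{\scriptscriptstyle\mathcal{T}}\bar\d_{\scriptscriptstyle\mathcal{T}}F$ are of type $(1,1)$ on $\mathcal{T}$ because the family is holomorphic. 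The only difference is that you make explicit the commutation of the lifted vector fields with the $M$-differential, which the paper leaves implicit.
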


This ends the general discussion of families of K\"ahler
structures. In the next section, we discuss general aspects of
geometric quantization, and in particular the need for a choice of
auxiliary polarization in the construction. Understanding the
effects of this choice naturally leads us to
consider families of K\"ahler structures, and ultimately to the Hitchin
connection relating the different choices. The results from this
section will play a fundamental role in the discussion.

\chapter{Geometric Quantization}
\label{cha:geom-quant}

In very broad terms, geometric quantization concerns the passage from
classical mechanics to quantum mechanics. It aims to produce a Hilbert
space of quantum states from a classical phase space, in the form of a
symplectic manifold, and a self-adjoined operator from a classical
observable, in the form of a function on the classical phase space.

In the following, we shall briefly review the basic notions from
geometric quantization relevant to us. A key role is played by an
auxiliary choice of polarization, which is often chosen to be
K\"ahler. For a broader treatment of geometric quantization, the
reader is referred to \cite{MR1183739} and
\cite{MR2151954}. 

After reviewing the elements of geometric quantization, we calculate
the commutators of certain differential operators acting on the
prequantum spaces. These will be relevant in later discussions of the
Hitchin connection and its curvature.

\subsection{Prequantization}

In geometric quantization, the Hilbert space of quantum states arises
as sections of a certain Hermitian line bundle over the classical
phase space. As a model for this classical phase space, we consider a
symplectic manifold $(M, \omega)$ of dimension $2m$. A prequantum line
bundle over the $M$ is a complex line bundle $\mathcal{L}$ endowed
with a Hermitian metric $h$ and a compatible connection $\nabla$ of
curvature
\begin{align*}
  F_{\nabla} = -i \omega.
\end{align*}

A symplectic manifold admitting a prequantum line bundle is called
prequantizable. Evidently, this is not the case for every symplectic
manifold. Indeed, the real first Chern class of a prequantum line
bundle is given by $c_1(\mathcal{L}) = \big
[\frac{\omega}{2\pi}\big ]$, leading us to the following necessary
condition for prequantizability, called the prequantum condition,
\begin{align}
  \label{eq:293485729}
  \big[ \tfrac{\omega}{2\pi} \big] \in \im \big (H^2(M, \setZ) \to
  H^2(M, \setR) \big).
\end{align}
This is, in fact, also sufficient to ensure the existence of a
prequantum line bundle, and the inequivalent prequantum
line bundles over $M$ are parametrized by $H^1(M, \mathrm{U}(1))$.

For any natural number $k$, called the \emph{level}, we consider the
\emph{prequantum space},
\begin{align*}
  \pqs = C^\infty(M, \mathcal{L}^k),
\end{align*}
of smooth sections of the $k$'th tensor power of the line bundle
$\mathcal{L}$. These sections play the role of wave functions in the
quantum theory. If $f \in C^\infty(M)$ is a function on $M$, the
corresponding \emph{prequantum operator}, acting on $\pqs$,
is defined by
\begin{align}
  \label{eq:9}
  P_k(f) = \frac{i}{k}\nabla_{X_f} + f,
\end{align}
where $X_f$ is the Hamiltonian vector field of the function $f$. The virtue
of \eqref{eq:9} is that the prequantum operators satisfy the
correspondence principle
\begin{align}
  \label{eq:16}
  \com{P_k(f)}{P_k(g)} = \frac{i}{k} P_k(\{f,g\}),
\end{align}
which is one of the distinctive features of a viable quantization.

From a physical perspective, the wave functions in $\pqs$
depend on twice the number of variables they should. A standard way to
remedy this is to pick an auxiliary polarization on $M$ and
consider the space of polarized sections of the line bundle. The
polarization can be by real or complex Lagrangian subspaces. In the
following, we will focus on the case of complex K\"ahler polarizations.

\subsection{K\"ahler Quantization}

From now on, we assume that the symplectic manifold admits a K\"ahler
structure, in the form of an integrable almost complex structure $J$
on $M$ which is compatible with the symplectic structure.
Since the K\"ahler form $\omega$ has type $(1,1)$, it follows that the
(0,1)-part of the connection on the prequantum line bundle
$\mathcal{L}$ defines a holomorphic structure. Therefore, we can define
the \emph{quantum space} to be the space of holomorphic (or polarized)
sections,
\begin{align*}
  \qs_J = H^0(M_J, \mathcal{L}^k) = \{s \in \pqs
  \mid \nabla_Z s = 0, \ \forall Z \in T''\!M_J\},
\end{align*}
which is a subspace of the prequantum space $\pqs$ of smooth
sections. If the manifold $M$ is compact, then $\qs_J$ is a
finite-dimensional space by standard theory of elliptic operators.

Unfortunalely, the prequantum operators do not in general preserve the
space of holomorphic sections.  A function $f \in C^\infty(M)$ is
polarized if the (1,0)-part of its corresponding Hamiltonian vector
field is holomorphic. In other words, the space of polarized functions
is given by
\begin{align*}
  C^\infty_J(M) = \{f \in C^\infty(M) \mid \nabla_Z X_f = 0, \ \forall
  Z \in T''\!M_J\}.
\end{align*}
Now, the operator $P_k(f)$ preserves $\qs_J$ if and only $f$ is a
polarized function. In fact, it is easily verified that a first-order
differential operator of the form $\frac{i}{k} \nabla_X + f$ on $\pqs$
preserves the subspace $\smash{\qs_J}$ if and only if $f$ is a
polarized function and $X' = X_f'$. This gives another justification
for the choice of prequantum operators. In fact, the (0,1)-part of the
vector field makes no difference to the action of $P_k(f)$ on $\qs_J$,
so if we define a polarized variation of the prequantum operators by
\begin{align}
  \label{eq:7}
  P_k\;\!\!\!'(f)_J = \frac{i}{k}\nabla_{X'_f} + f,
\end{align}
then these are essentially the only first-order operators with a
chance of preserving the subspace $\qs_J$ of
$\pqs$. For a real polarized function $f \in C_J^\infty(M)$,
the Hamiltonian vector field $X_f$ must be Killing for the K\"ahler
metric, effectively reducing the quantizable observables to an at most
finite-dimensional, and often trivial, space (see \cite{MR1183739}).

To get more quantizable observables, their quantization is modified in
the following way. The space $\smash{\qs_J}$ is in fact a closed
subspace of $\pqs$, and therefore we have the orthogonal projection
$\pi^{(k)}_J \colon \pqs \to \qs_J$. For $f \in C^\infty(M)$, we then
define the corresponding quantum operator by
\begin{align*}
  Q_{k}(f)_J = \pi^{(k)}_J \circ P_k(f).
\end{align*}
These operators do not form an algebra, but they satisfy a weaker form
of \eqref{eq:16} (at least if $M$ is compact) in the sense that
\begin{align}
  \label{eq:81}
  \Big \lVert [Q_k(f), Q_k(g)] - \frac{i}{k} Q_k(\{f,g\}) \Big \rVert
  = O(k^{-2}) \quad \text{as} \quad k \to \infty,
\end{align}
with respect to the operator norm on $\qs_J$.  The proof of
\eqref{eq:81} relies on the fact that these operators are Toeplitz
operators (see \cite{MR1301849}).

Although this quantization scheme gives a Hilbert space of the right
size, it still fails to produce the right answers on basic examples
from quantum mechanics. In the end, what really matters is the
spectrum of the operators, and if the above procedure is applied to
the one-dimensional harmonic oscillator, the quantization yields a
spectrum which differs from the correct one by a shift. To deal with
this problem, the so-called metaplectic correction can be
introduced. We shall not pursue this direction further in the
present paper, but refer the interested reader to \cite{MR2928088},
where the constrution of the Hitchin connection in the metaplectic
setting is discussed.

\subsection*{Commutators of Differential Operators}

The description of the Hitchin connection, and the calculation of its
curvature in particular, requires the calculation of commutators of a
number of differential operators on sections on the prequantum line
bundle $\mathcal{L}$ and its tensor powers.  Since the commutators of
even second-order operators are quite complicated, it will be
convenient to encode the operators through their symbols, as in
\eqref{eq:53}, and have general but explicit descriptions of the
symbols of the commutators in terms of the symbols of the
commutants. The followings lemmas give exactly such descriptions.

The very definition of the curvature of the line bundle
$\mathcal{L}^k$ implies the basic relation
\begin{align*}
  [\nabla_X, \nabla_Y]s = \nabla_{[X,Y]} s - ik \omega(X,Y) s
\end{align*}
for any vector fields $X,Y$ on $M$ and any smooth section $s \in
\pqs$. Things become a little more complicated when
second-order operators are introduced.
\begin{lemma}
  \label{lem:7}
  For any K\"ahler structure on $M$, any vector field $X$ on $M$, and
  any symmetric bivector field $B \in C^\infty(M, S^2(TM))$, we have
  the symbols
  \begin{align*}
    \sigma_2 \com{\nabla^2_B}{\nabla_X} &= 2 \Sym(B \trdot
    \nabla X) - \nabla_X B\\
    \sigma_1 \com{\nabla^2_B}{\nabla_X} &= \nabla^2_B X - 2ik B \trdot
    \omega \trdot X + B^{uv} R^a_{wuv} X^w\\
    \sigma_0 \com{\nabla^2_B}{\nabla_X} &= -ik\, \omega (B \trdot \nabla
    X) 
  \end{align*}
  for the commutator of the operators $\nabla_X$ and $\nabla^2_B$
  acting on $\pqs$.
\end{lemma}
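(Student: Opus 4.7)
The plan is a direct computation of $[\nabla^2_B, \nabla_X]$ applied to an arbitrary section $s \in \pqs$, carried out in index notation (or equivalently at a fixed point in geodesic normal coordinates, where Christoffel symbols vanish). I would expand both $\nabla^2_B(\nabla_X s) = B^{ab}\nabla_a\nabla_b(X^c \nabla_c s)$ and $\nabla_X(\nabla^2_B s) = X^c \nabla_c(B^{ab}\nabla_a\nabla_b s)$ by the Leibniz rule, then organize the resulting terms according to how many covariant derivatives of $s$ remain, so that each $\sigma_d$ can be read off directly from the normal form \eqref{eq:53}.

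After the expansion, the terms containing three derivatives of $s$ combine into $B^{ab}X^c(\nabla_a\nabla_b\nabla_c - \nabla_c\nabla_a\nabla_b)s$, which must be reduced to lower order via the identity
\begin{align*}
\nabla_a\nabla_b\nabla_c s - \nabla_c\nabla_a\nabla_b s = \nabla_a[\nabla_b, \nabla_c] s + [\nabla_a, \nabla_c]\nabla_b s.
\end{align*}
The first commutator is the curvature of $\mathcal{L}^k$ acting on the scalar-type section $s$, yielding $-ik\omega_{bc} s$, and after differentiation along $\nabla_a$ becomes $-ik\omega_{bc}\nabla_a s$ since $\nabla\omega = 0$. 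The second commutator is the curvature of $T^*M \otimes \mathcal{L}^k$ acting on the one-form $\nabla_b s$, and contributes the Riemann term $-R^d_{acb}\nabla_d s$ together with another $-ik\omega_{ac}\nabla_b s$. Contracting with $B^{ab}X^c$, using symmetry of $B$ and the Riemann antisymmetry $R^a_{uwv} = -R^a_{wuv}$, these combine into exactly the $-2ik\, B\trdot\omega\trdot X$ and $B^{uv} R^a_{wuv} X^w$ pieces of the claimed $\sigma_1$.

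The remaining terms split cleanly. The cross term $2B^{ab}(\nabla_a X^c)(\nabla_b\nabla_c s)$ decomposes, via $\nabla_b\nabla_c = \nabla_{(b}\nabla_{c)} + \tfrac12[\nabla_b, \nabla_c]$, into a symmetric piece whose coefficient, after symmetrization in $(b,c)$, is $2\Sym(B\trdot \nabla X)$ (contributing to $\sigma_2$), plus an antisymmetric piece equal to $B^{ab}(\nabla_a X^c)(-ik\omega_{bc})s$, which gives precisely $\sigma_0 = -ik\,\omega(B\trdot\nabla X)$. Finally, the remaining terms $B^{ab}(\nabla_a\nabla_b X^c)\nabla_c s$ and $-X^c(\nabla_c B^{ab})\nabla_a\nabla_b s$ contribute $\nabla^2_B X$ to $\sigma_1$ and $-\nabla_X B$ to $\sigma_2$ respectively, completing the three identifications.

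The only real obstacle is bookkeeping: keeping the Riemann curvature sign conventions straight (the paper's $R^d_{abc}$ satisfies $[\nabla_a,\nabla_b]\alpha_c = -R^d_{abc}\alpha_d$ on one-forms) and recognizing that the two separate $-ik\omega$ contributions arising in the third-derivative reduction collapse, by symmetry of $B$ and relabeling, into a single factor $-2ik\, B\trdot\omega\trdot X$. Once these curvature identities are invoked together with $\nabla\omega = 0$, the remaining computation is purely algebraic and each symbol is obtained immediately.
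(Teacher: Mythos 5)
Your proposal is correct and follows essentially the same route as the paper: a direct Leibniz-rule expansion of both orderings, reduction of the third-derivative terms via the curvature identities $[\nabla_b,\nabla_c]s=-ik\omega_{bc}s$ and $[\nabla_a,\nabla_c]\nabla_b s=-R^d_{acb}\nabla_d s-ik\omega_{ac}\nabla_b s$, and a sort of the surviving terms by the number of remaining covariant derivatives of $s$. The only cosmetic difference is that you package the index commutations into the two-commutator identity and split the cross term into symmetric and antisymmetric parts explicitly, whereas the paper commutes indices stepwise; the resulting symbols agree term by term.
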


\begin{proof}
  By straightforward calculation, we get
  \begin{align*}
    &B^{uv} \nabla^2_{uv} X^x \nabla_{x} s \\ & \qquad = B^{uv} X^x
    \nabla^3_{uvx} s + 2 B^{uv} \nabla_v(X^x) \nabla^2_{ux} s + B^{uv}
    \nabla^2_{uv}(X^x) \nabla_x s \\ & \qquad = B^{uv} X^x
    \nabla^3_{xuv} s - 2ik B^{uv} \omega_{vx} X^x \nabla_u s - B^{uv}
    X^x R^r_{uxv} \nabla_r s \\ & \qquad \qquad + B^{uv} \nabla_v
    (X^x) \nabla^2_{ux} s + B^{uv} \nabla_v (X^x) \nabla^2_{xu} s -
    ik B^{uv} \omega_{ux} \nabla_v (X^x) s \\ &\qquad \qquad + B^{uv}
    \nabla^2_{uv}(X^x) \nabla_x s,
  \end{align*}
  where we used the fact that the symplectic form $\omega$ is parallel
  with respect to the Levi-Civita connection.  To get an expression
  for the desired commutator, we subtract
  \begin{align*}
    X^x \nabla_x B^{uv} \nabla^2_{uv} s = X^x \nabla_x (B^{uv})
    \nabla^2_{uv} s + B^{uv} X^x \nabla^3_{xuv} s,
  \end{align*}
  and the stated symbols can easily be extracted from the result.
\end{proof}

Naturally, things get even more complicated for two second-order
operators.
\begin{lemma}
  \label{lem:8}
  For any K\"ahler structure on $M$ and any symmetric bivector fields
  $A, B \in C^\infty(M, S^2(TM))$, we have the symbols
  \begin{align*}
    \sigma_3 \com{\nabla^2_A}{\nabla^2_B} &= 2 \Sym (A \trdot \nabla
    B) - 2 \Sym(B \trdot \nabla A) \\[2pt]
    \sigma_2 \com{\nabla^2_A}{\nabla^2_B} &= \nabla^2_A B - \nabla^2_B A
    - 4ik \Sym(A\trdot \omega \trdot B) \\ & \qquad + 2 \Sym(A^{xy}
    R^a_{uxy} B^{ub}) - 2 \Sym(B^{uv} R^a_{xuv} A^{xb}) \\[2pt]
    \sigma_1 \com{\nabla^2_A}{\nabla^2_B} &= - 2i k A^{xy} \omega_{yu}
    \nabla_x (B^{ua}) + 2ik B^{uv} \omega_{vx} \nabla_u (A^{xa}) \\[2pt] &
    \qquad - A^{xy} \nabla_x (R^a_{yuv}) B^{uv} + B^{uv} \nabla_v
    (R^a_{uxy}) A^{xy} \\[2pt] & \qquad - \smash{\frac{4}{3}} A^{xy} R^a_{xuv}
    \nabla_y B^{uv} + \smash{\frac{4}{3}} B^{uv} R^a_{uxy} \nabla_v A^{xy} \\[2pt]
    \sigma_0 \com{\nabla^2_A}{\nabla^2_B} &= \frac{ik}{2} A^{xy}
    J_y^j R_{xuvj} B^{uv} - \frac{ik}{2} B^{uv} J_v^j R_{uxyj} A^{xy}
  \end{align*}
  for the commutator of the operators $\nabla^2_A$ and $\nabla^2_B$
  acting on  $\pqs$.
\end{lemma}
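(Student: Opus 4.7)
The plan is to extend the direct index-notation calculation that produced \Fref{lem:7} to the case of two second-order operators. Expand
\[
\nabla^2_A \nabla^2_B s = A^{xy}\nabla_x \nabla_y\bigl(B^{uv} \nabla_u \nabla_v s\bigr)
\]
via the Leibniz rule, group terms by how many covariant derivatives ultimately act on $s$, and rewrite each group in the canonical symbolic form $\nabla^3_{S_3} + \nabla^2_{S_2} + \nabla_{S_1} + S_0$ of \eqref{eq:53}. Performing the same expansion with $A$ and $B$ swapped and subtracting yields $[\nabla^2_A,\nabla^2_B]s$, from which the four symbols can be read off by inspection.

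The only inputs needed to bring the iterated covariant derivatives into canonical symmetrized form are the two commutation rules $[\nabla_a,\nabla_b]s = -ik\omega_{ab}s$ on sections of $\mathcal{L}^k$ and $[\nabla_a,\nabla_b]V^c = R^c{}_{dab}V^d$ on a vector index, extended in the usual way to tensor-valued sections; the K\"ahler identity $\nabla\omega=0$ and the standard symmetries of $R$ are then used freely. For $\sigma_3$ only the configurations with exactly one derivative hitting $B$ (respectively $A$) contribute, and the symmetry of $A$ and $B$ combined with the symmetrization of the three free indices gives $2\Sym(A\trdot\nabla B)-2\Sym(B\trdot\nabla A)$. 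For $\sigma_2$ the pieces $\nabla^2_A B$ and $-\nabla^2_B A$ arise when both derivatives of one factor hit the other bivector, while commuting a single $\nabla$ past the trailing $\nabla_u\nabla_v s$ produces a prequantum piece $-4ik\Sym(A\trdot\omega\trdot B)$ along with the two Riemann pieces that combine into the stated $\Sym(ARB)$ expressions. The formulas for $\sigma_1$ and $\sigma_0$ emerge from one further round of commutations, together with $\nabla\omega=0$ and the K\"ahler relation between $\omega$, $J$ and $g$, which is what converts the remaining $\omega$-contracted Riemann tensors into the $J$-contracted ones appearing in $\sigma_0$.

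I expect the main obstacle to be purely the volume of bookkeeping rather than any conceptual difficulty. Each commutation past a derivative of $s$ generates both a $-ik\omega$ contribution from the prequantum curvature and an $R$ contribution on the remaining free tensor index; every resulting raw tensor must then be split into canonical symbolic form, producing further lower-order corrections involving $\nabla A$, $\nabla B$, and $\nabla R$; and one has to keep careful track of which indices are being symmetrized at each stage. The essential simplifications come from contracting the symmetry of $A$ and $B$ against the antisymmetry of $\omega$ and of $R$ in appropriate pairs, and the agreement with the four stated symbols will follow once all these cancellations are carried out systematically.
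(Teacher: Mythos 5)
Your proposal is correct and follows essentially the same route as the paper: a direct Leibniz expansion of $\nabla^2_A\nabla^2_B s$ in index notation, reordering the iterated covariant derivatives using the prequantum curvature identity and the Riemann curvature acting on tensor indices (with $\nabla\omega=0$ and the symmetries of $R$), recasting everything in the canonical form \eqref{eq:53}, and antisymmetrizing in $A$ and $B$ to read off the four symbols. The paper's proof is exactly this bookkeeping, carried out explicitly in equations \eqref{eq:13}--\eqref{eq:36}.
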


\begin{proof}
  Once again, the proof proceeds by straightforward calculation. We get
  \begin{align}
    \label{eq:13}
    \begin{aligned}
      &A^{xy}\nabla^2_{xy}B^{uv} \nabla^2_{uv} s = A^{xy} B^{uv}
      \nabla^4_{xyuv} s + 2 A^{xy} \nabla_y (B^{uv}) \nabla^3_{xuv} s
      + A^{xy} \nabla^2_{xy} (B^{uv}) \nabla^2_{uv} s.
    \end{aligned}
  \end{align}
  Focusing on the first term, we commute the indices $x$ and $y$ past
  $u$ and $v$.
  \begin{align*}
    & A^{xy} B^{uv} \nabla^4_{xyuv} s \\&\qquad = A^{xy} B^{uv} \bigl
    ( \nabla^4_{xuyv} - \nabla_x R^r_{yuv} \nabla_r - ik \omega_{yu}
    \nabla_{xv} \bigr ) s \\&\qquad = A^{xy} B^{uv} \bigl (
    \nabla^4_{xuvy} - ik \omega_{yv} \nabla^2_{xu} - \nabla_x R^r_{yuv}
    \nabla_r - ik \omega_{yu} \nabla^2_{xv} \bigr ) s \\&\qquad =
    A^{xy} B^{uv} \bigl ( \nabla^4_{uxvy} - R^r_{xuv} \nabla^2_{ry} -
    R^r_{xuy} \nabla^2_{vr} - ik \omega_{xu} \nabla^2_{vy}
    \\&\qquad\qquad - ik \omega_{yv} \nabla^2_{xu} - \nabla_x R^r_{yuv}
    \nabla_r - ik \omega_{yu} \nabla_{xv} \bigr ) s \\&\qquad = A^{xy}
    B^{uv} \bigl (\nabla^4_{uvxy} - \nabla_u R^r_{xvy} \nabla_r - ik
    \omega_{xv} \nabla^2_{uy} \\&\qquad\qquad - R^r_{xuv}
    \nabla^2_{ry} - R^r_{xuy} \nabla^2_{vr} - ik \omega_{xu}
    \nabla^2_{vy} - ik \omega_{yv} \nabla^2_{xu} - \nabla_x R^r_{yuv}
    \nabla_r - ik \omega_{yu} \nabla^2_{xv} \bigr ) s,
  \end{align*}
  where we used the fact that $\omega$ is parallel, but otherwise just
  added the curvature terms. Expanding by the Leibniz rule, using
  symmetries, and collecting terms, this can be rewritten as
  \begin{align}
    \label{eq:37}
    \begin{aligned}
      & A^{xy} B^{uv} \nabla^4_{xyuv} s \\&\quad = A^{xy} B^{uv}
      \bigl (\nabla^4_{uvxy} - R^r_{xuv} \nabla^2_{ry} - R^r_{xuv}
      \nabla^2_{yr} + R^r_{uxy} \nabla^2_{vr} + R^r_{uxy}
      \nabla^2_{rv} \\&\quad\quad - 2 ik \omega_{xu} \nabla^2_{yv} -
      2 ik \omega_{xu} \nabla^2_{vy} + \nabla_u (R^r_{vxy}) \nabla_r -
      \nabla_x (R^r_{yuv}) \nabla_r - ik R^r_{uxy} \omega_{vr} \bigr )
      s,
    \end{aligned}
  \end{align}
  where the order of differentiation was interchanged for the term
  $A^{xy} B^{uv} R^r_{xuy} \nabla^2_{vr}s$.  The second term of
  \eqref{eq:13} can be rewritten as
  \begin{align}
    \label{eq:35}
    \begin{aligned}
      &2 A^{xy} \nabla_y (B^{uv}) \nabla^3_{xuv} s \\&\qquad=
      \frac{2}{3} A^{xy} \nabla_y (B^{uv}) \bigr( \nabla^3_{xuv} + 2
      \nabla^3_{uxv}  - 2 R^r_{xuv} \nabla_r - i 2k\, \omega_{xu}
      \nabla_v \bigl) s \\&\qquad = \frac{2}{3} A^{xy} \nabla_y
      (B^{uv}) \big( \nabla^3_{xuv} + \nabla^3_{uxv} + \nabla^3_{uvx}
      - 2 R^r_{xuv} \nabla_r - i 3 k\, \omega_{xu} \nabla_v \big) s.
    \end{aligned}
  \end{align}
  Analagous to \eqref{eq:13}, the other term of the commutator
  $\com{\nabla^2_A}{\nabla^2_B}$ yields
  \begin{align}
    \label{eq:27}
    &B^{uv} \nabla^2_{uv}A^{xy}\nabla^2_{xy} s = A^{xy} B^{uv}
    \nabla^4_{uvxy} s + 2 B^{uv} \nabla_v (A^{xy}) \nabla^3_{uxy} s +
    B^{uv} \nabla^2_{uv} (A^{xy}) \nabla^2_{xy} s,
  \end{align}
  where the second term can be rewritten as
  \begin{align}
    \label{eq:36}
    \begin{aligned}
      &2 B^{uv} \nabla_v (A^{xy}) \nabla^3_{uxy} s \\&\qquad=
      \frac{2}{3} B^{uv} \nabla_v (A^{xy}) \bigr( \nabla^3_{uxy} + 2
      \nabla^3_{xuy} s - 2 R^r_{uxy} \nabla_r - i 2 k\, \omega_{ux}
      \nabla_y \bigl) s \\&\qquad = \frac{2}{3} B^{uv} \nabla_y
      (A^{xy}) \big( \nabla^3_{uxy} + \nabla^3_{xuy} + \nabla^3_{xyu}
      - 2 R^r_{uxy} \nabla_r - i 3 k\, \omega_{ux} \nabla_y \big) s.
    \end{aligned}
  \end{align}
  By subtracting \eqref{eq:27} from \eqref{eq:13}, substituting
  \eqref{eq:35} and \eqref{eq:36}, and collecting terms by order of
  covariant differentiation, one verifies the claimed symbols.
\end{proof}

\chapter{The Hitchin Connection}
\label{cha:hitchin-connection}

In this section, we study the Hitchin connection and calculate its
curvature. We start by recalling the differential geometric
construction of the Hitchin connection in geometric quantization. The
results concerning this construction are all proved in
\cite{MR2928087}, to which the reader is referred for further details.

Consider a symplectic manifold $(M,\omega)$, equipped with a
prequantum line bundle $\mathcal{L}$, and assume that $H^1(M, \setR) =
0$ and that the real first Chern class of $(M, \omega)$ is given by
\begin{align}
  \label{eq:521}
  c_1(M, \omega) = \ccint \big [ \tfrac{\omega}{2\pi} \big],
\end{align}
for some integer $\ccint \in \setZ$. Further, assume that $M$ is of
K\"ahler type, and let $J$ be a rigid and holomorphic family of
K\"ahler structures on $(M,\omega)$, parametrized by some complex
manifold $\mathcal{T}$. Finally, assume that the family of K\"ahler
structures admits a family of Ricci potentials $F$, and that it does
not admit any non-constant holomorphic functions on $M$.

The prequantum space $\pqs = C^\infty(M, \mathcal{L}^k)$
forms the fiber of a trivial, infinite-rank vector bundle over
$\mathcal{T}$,
\begin{align*}
  \pqb = \mathcal{T} \times \pqs.
\end{align*}
If $\trivcon$ denotes the trivial connection on $\pqb$, we
consider a connection of the form
\begin{align}
  \label{eq:518}
  \boldnabla = \trivcon + a,
\end{align}
where $a \in \Omega^1(\mathcal{T}, \mathcal{D}(M, \mathcal{L}^k))$ is
a one-form on $\mathcal{T}$ with values in the space of differential
operators on sections of $\mathcal{L}^k$, and we seek an $a$ for which
the connection $\boldnabla$ preserves the quantum subspaces
$\qs_\sigma = H^0(M_\sigma, \mathcal{L}^k)$ of holomorphic
sections inside each fiber of $\pqb$.

\begin{definition}
  \label{def:11}
  A \emph{Hitchin connection} on the bundle $\pqb$ is a
  connection of the form \eqref{eq:518} which preserves the fiberwise
  subspaces $\qs$.
\end{definition}

It turns out that, with the assumptions made above, an explicit
construction of a Hitchin connetion can be given. For any vector field
$V$ on $\mathcal{T}$, the operator $a(V)$
is of order two, with principal symbol $G(V)$ and lower order symbols
given in terms of $G(V)$ and the Ricci potential.
The precise statement is contained in the following theorem from
\cite{MR2928087}.
\begin{theorem}
  \label{thm:58}
  Let $(M, \omega)$ be a prequantizable symplectic manifold with
  $H^1(M, \setR) = 0$ and $c_1(M, \omega) = \ccint \big [
  \frac{\omega}{2\pi} \big ]$. Further, let $J$ be a rigid,
  holomorphic family of K\"ahler structures on $M$, parametrized by a
  complex manifold $\mathcal{T}$, admitting a family of Ricci
  potentials $F$ but no non-constant holomorphic functions on
  $M$. Then the expression
  \begin{align*}
    \boldnabla_V = \trivcon_V + \frac{1}{4k+2\ccint}(\Delta_{G(V)} + 2
    \nabla_{G(V) \trdot dF} + 4kV'[F])
  \end{align*}
  defines a Hitchin connection in the bundle $\qb$ over
  $\mathcal{T}$.
\end{theorem}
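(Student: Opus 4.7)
The plan is to verify directly that the proposed $\boldnabla$ preserves the subbundle $\qb \subset \pqb$, by reducing this to an operator identity on holomorphic sections and then checking it using the technical results established earlier in the paper.

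A local section $s$ of $\qb$ is a smooth family $\sigma \mapsto s_\sigma$ satisfying $\nabla_{Y''_\sigma} s_\sigma = 0$ for every type-$(0,1)$ vector field $Y''_\sigma$ on $M_\sigma$. Writing $Y''_\sigma = \pi^{0,1}_\sigma Y$ for a fixed real field $Y$ on $M$ and differentiating along a vector field $V$ on $\mathcal{T}$, I use $V[\pi^{0,1}] = \tfrac{i}{2} V[J]$ together with $V[J] = \tilde G(V) \trdot \omega$ and the fact that $V[J]$ swaps types to obtain
\begin{align*}
  \nabla_{Y''}(V[s]) \;=\; -\tfrac{i}{2}\, \nabla_{G(V)\trdot\omega\trdot Y''}\, s,
\end{align*}
the term $\bar G(V) \trdot \omega \trdot Y''$ vanishing by type considerations. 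Consequently, $\boldnabla = \trivcon + a$ preserves $\qb$ if and only if, for every $(0,1)$-vector field $Y''$ and every holomorphic $s$,
\begin{align*}
  [\nabla_{Y''},\, a(V)]\, s \;=\; \tfrac{i}{2}\, \nabla_{G(V)\trdot\omega\trdot Y''}\, s.
\end{align*}

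The second step is to expand this commutator for the proposed $a(V) = (4k+2\ccint)^{-1}\bigl(\Delta_{G(V)} + 2\nabla_{G(V)\trdot dF} + 4kV'[F]\bigr)$ acting on a holomorphic $s$. Writing $\Delta_{G(V)} = \nabla^2_{G(V)} + \nabla_{\delta G(V)}$ and applying \Fref{lem:7} with $B = G(V)$ and $X = Y''$, rigidity $\nabla_{Y''} G(V) = 0$ reduces the principal symbol of $[\nabla^2_{G(V)}, \nabla_{Y''}]$ to $2\Sym(G(V) \trdot \nabla Y'')$, which has only mixed $(1,1)$-type components in $S^2(TM_\setC)$, and the corresponding second-order operator therefore annihilates holomorphic $s$. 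The surviving first-order symbol contains $-2ik\, G(V) \trdot \omega \trdot Y''$ together with the Riemann-curvature term $G(V)^{uv} R^a_{wuv} Y''^w$; on a K\"ahler manifold this curvature term collapses, in the relevant type components, to a Ricci contribution, and substituting $\rho = \ccint \omega + 2i\d\bar\d F$ splits it into a multiple of $G(V) \trdot \omega \trdot Y''$ (combining with the $-2ik$ piece to produce the denominator $4k+2\ccint$) plus a remainder of type $\d\bar\d F$.

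The remaining first- and zeroth-order contributions come from $[\nabla_{Y''}, 2\nabla_{G(V)\trdot dF}]$, handled by the basic commutator $[\nabla_{Y''}, \nabla_Z] = \nabla_{[Y'', Z]} - ik\omega(Y'', Z)$, and from the immediate $[\nabla_{Y''}, 4kV'[F]] = 4kY''[V'[F]]$. Collecting everything, the required identity reduces exactly to the Ricci-potential identity $4i\bar\d V'[F] = \delta G(V) \trdot \omega + 2\, dF \trdot G(V) \trdot \omega$ from \Fref{prop:10}, which is precisely what forces the coefficients $2$ and $4k$ in front of $\nabla_{G(V)\trdot dF}$ and $V'[F]$. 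I expect the main obstacle to be the careful bookkeeping of the $(1,0)/(0,1)$ decomposition of each contribution: rigidity, the K\"ahler symmetries of the Riemann tensor, and \Fref{prop:10} must be invoked simultaneously, and it is here that the hypotheses $H^1(M, \setR) = 0$ and absence of non-constant holomorphic functions enter, through their role in the derivation of \Fref{prop:10}.
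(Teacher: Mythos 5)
Your strategy is the same one the paper relies on: the paper does not reprove \Fref{thm:58} but refers to \cite{MR2928087}, and the argument there is exactly your two-step scheme --- differentiate the holomorphicity condition $\nabla_{Y''}s=0$ along $V$ to obtain the characterizing identity \eqref{eq:45}, and then verify that identity for the stated ansatz by expanding the commutator with \Fref{lem:7}, using rigidity to kill the second-order symbol, the K\"ahler curvature identities to convert the Riemann term into a Ricci term, and \Fref{prop:10} to absorb what remains; the observation that \eqref{eq:41} is precisely what fixes the coefficients $2$ and $4k$ is also the correct punchline.

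Two points in your sketch need repair. First, it is not true that a second-order operator $\nabla^2_S$ with symmetric symbol $S$ of pure type $(1,1)$ annihilates holomorphic sections: for $s$ with $\nabla_{Y''}s=0$ one has $\nabla^2_{X',Y''}s=0$ but $\nabla^2_{X'',Y'}s=-ik\,\omega(X'',Y')s$, so $\nabla^2_S s$ is a nonzero zeroth-order term proportional to $k$ times the contraction of $S$ against $\omega$ restricted to types. In the case at hand this term cancels exactly against the zeroth-order symbol $-ik\,\omega\bigl(G(V)\trdot\nabla Y''\bigr)$ supplied by \Fref{lem:7}, which your outline does not mention; dropping both happens to give the right answer, but as written the step is unjustified and the bookkeeping must carry both contributions. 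Second, the theorem asserts more than preservation of the fiberwise subspaces: it asserts that the spaces $\qs_\sigma$ assemble into a smooth subbundle $\qb$ of $\pqb$, and, as the paper remarks after \eqref{eq:45}, this is deduced from the preservation property and is part of the statement; your proof should record that deduction (or the local constancy of $\dim \qs_\sigma$) rather than presuppose the bundle $\qb$.
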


The characterizing feature of the operator-valued one-form $a$ is the
fact that it satisfies 
\begin{align}
  \label{eq:45}
  \com{\nabla^{0,1}}{a(V)} s = - \smash{\frac{i}{2}} \omega \trdot G(V)
  \trdot \nabla s,
\end{align}
for any section $s$ of $\pqb$. In fact, this property,
and the fact that the Hitchin connection preserves the quantum
subspaces $\qs$ inside $\pqs$, implies that these
subspaces form a bundle $\qb$ over $\mathcal{T}$, and this
is part of the statement in \Fref{thm:58}.

Having reviewed the explicit differential geometric construction of a
Hitchin connection in geometric quantization, we turn to the
calculation of its curvature.

\subsection*{Curvature of the Hitchin Connection}

Suppose the assumptions of \Fref{thm:58} are satisfied, ensuring the
existence of the Hitchin connection, and let us calculate its
curvature. For this calculation, it will be convenient to rewrite the
Hitchin connection slightly as
\begin{align}
  \label{eq:50}
  \boldnabla_V = \trivcon_V + \frac{1}{4k+2\ccint} b(V) + V'[F] \quad
  \text{with} \quad b(V) = \Delta_{G(V)} + 2
  \nabla_{G(V) \trdot dF} - 2\ccint V'[F],
\end{align}
essentially splitting the operator $a(V)$ into orders of $k$. In
particular, the one-form $b$ does not involve the level $k$.

We shall divide the calculation of the curvature into a number of
propositions. The first relies on \Fref{lem:7} and \Fref{lem:8}, in
combination with \Fref{prop:2}, to compute a commutator of fundamental
importance to the curvature calculation.

\begin{proposition}
  \label{prop:3}
  For any rigid family of K\"ahler structures, the commutator of
  $\Delta_{\tilde G(V)}$ and $\Delta_{\tilde G(W)}$, acting on
  $\pqs$, has the symbols
  \begin{align*}
    \sigma_3 \com{\Delta_{\tilde G(V)}}{\Delta_{\tilde G(W)}} &= 0 \\
    \sigma_2 \com{\Delta_{\tilde G(V)}}{\Delta_{\tilde G(W)}} &=
    -4ik\;\!
    \Theta(V,W) \\
    \sigma_1\com{\Delta_{\tilde G(V)}}{\Delta_{\tilde G(W)}} &=
    \Delta_{G(V)} \delta G(W) - \Delta_{G(W)} \delta G(V) +
    \Delta_{\bar
      G(V)} \delta \bar G(W) - \Delta_{\bar G(W)} \delta \bar G(V) \\[-1pt]
    & \qquad \smash{- 4ik\;\! \delta (\Theta(V,W)) - \delta (\tilde
      G(V) \trdot r) \trdot \tilde G(W) + \delta (\tilde G(W) \trdot r
      ) \trdot \tilde G(V)}
    \\[1pt]
    \sigma_0 \com{\Delta_{\tilde G(V)}}{\Delta_{\tilde G(W)}} &= -
    ik\;\!  \delta \delta \Theta (V,W) + ik\;\! r (\Theta(V,W)) ,
  \end{align*}
  for any vector fields $V$ and $W$ on $\mathcal{T}$. 
\end{proposition}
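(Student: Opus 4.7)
The starting point is the decomposition
\begin{align*}
  \Delta_{\tilde G(V)} = \nabla^2_{\tilde G(V)} + \nabla_{\delta \tilde G(V)},
\end{align*}
which breaks the commutator $[\Delta_{\tilde G(V)}, \Delta_{\tilde G(W)}]$ into four pieces: a second-second commutator, two second-first commutators, and one first-first commutator. To each I would apply the appropriate previous lemma: \Fref{lem:8} for the second-second piece, \Fref{lem:7} for the two mixed pieces, and the basic identity $[\nabla_X,\nabla_Y] = \nabla_{[X,Y]} - ik\omega(X,Y)$ for the first-first piece. Collecting terms by order in covariant derivatives then produces explicit candidate formulas for $\sigma_3,\sigma_2,\sigma_1,\sigma_0$ written in terms of $\tilde G(V)$, $\tilde G(W)$, their divergences, and curvature contractions.

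The heart of the argument is then to simplify these raw expressions using three structural facts: the type decomposition $\tilde G = G + \bar G$, the rigidity condition $\nabla_{X''} G(V) = 0$ (and its conjugate), and the symmetries of $\Gamma_3, \Gamma_2, \Gamma_1, \Gamma_0$ from \Fref{prop:2}. For instance, for $\sigma_3$ the raw expression from \Fref{lem:8} is $2\Sym(\tilde G(V) \trdot \nabla \tilde G(W)) - 2\Sym(\tilde G(W) \trdot \nabla \tilde G(V))$. The cross terms of the form $\Sym(G(V)\trdot \nabla \bar G(W))$ vanish, since the contracted index on $G(V)$ is of type $(1,0)$ while rigidity forces $\nabla_u \bar G(W) = 0$ in $(1,0)$-directions. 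The remaining pure terms $\Sym(G(V)\trdot \nabla G(W)) = \Gamma_3(V,W)$ and its conjugate are each symmetric in $V,W$ by \Fref{prop:2}, so $\sigma_3 = 0$.

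At order two, the Leibniz-type combination $\nabla^2_{\tilde G(V)} \tilde G(W) + 2\Sym(\tilde G(V) \trdot \nabla\delta \tilde G(W))$ that emerges from combining \Fref{lem:8} with the $\sigma_2$ contributions of the mixed commutators reassembles into $\Delta_{\tilde G(V)} \tilde G(W) + 2\Sym(\tilde G(V) \trdot \nabla \delta \tilde G(W))$, which — after again discarding cross terms by rigidity and type — is symmetric in $V,W$ by the $\Gamma_2$ statement of \Fref{prop:2} and hence cancels against the analogous term with $V,W$ swapped. The curvature contributions from \Fref{lem:8} involve $R^a_{uxy}$ contracted against $(2,0){+}(0,2)$ tensors, and the K\"ahler type constraints pair holomorphic with antiholomorphic indices so that these traces also cancel in pairs. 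What survives is precisely $-4ik\,\Sym(\tilde G(V)\trdot\omega\trdot\tilde G(W)) = -4ik\,\Theta(V,W)$, using that $\omega$ being of type $(1,1)$ kills the $G\trdot\omega\trdot G$ and $\bar G\trdot\omega\trdot\bar G$ pieces so that only the mixed contractions defining $\Theta$ remain.

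For $\sigma_1$ and $\sigma_0$ the strategy is the same: organize the output of \Fref{lem:7} and \Fref{lem:8}, take divergences of the second-order identities to exploit the symmetries of $\Gamma_1$ and $\Gamma_0$ from \Fref{prop:2}, and use rigidity together with the K\"ahler identities to collect the $k$-independent terms into the stated combinations $\Delta_{G(V)}\delta G(W) - \Delta_{G(W)}\delta G(V) + \Delta_{\bar G(V)}\delta \bar G(W) - \Delta_{\bar G(W)}\delta \bar G(V)$ and $-\delta(\tilde G(V)\trdot r)\trdot\tilde G(W)+\delta(\tilde G(W)\trdot r)\trdot\tilde G(V)$, and the $k$-linear terms into $-4ik\,\delta\Theta(V,W)$ and $-ik\,\delta\delta\Theta(V,W) + ik\,r(\Theta(V,W))$. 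The main obstacle is the purely mechanical one of bookkeeping: at $\sigma_1$ and $\sigma_0$ both lemmas contribute, curvature tensors appear with various index placements, and one must repeatedly invoke the K\"ahler identities for $R$ together with the parallelism of $\omega$ and $J$ in order to rearrange every curvature contraction into the Ricci form. The symmetries from \Fref{prop:2} are what make this possible by eliminating large blocks of terms symmetric in $V,W$ before the final identification is made.
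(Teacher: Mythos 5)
Your plan is correct and follows essentially the same route as the paper: decompose $\Delta_B = \nabla^2_B + \nabla_{\delta B}$, feed the four resulting commutators into \Fref{lem:7}, \Fref{lem:8} and the basic curvature relation, kill the cross terms and symmetrize away the pure terms using rigidity, type considerations and \Fref{prop:2}, and convert the surviving mixed-type pieces (via the Ricci identity applied to $G(V)\trdot\nabla\delta\bar G(W)$) into the stated $\Theta$ and Ricci contractions. The only small discrepancy is that the paper needs only the $\Gamma_3$ and $\Gamma_2$ symmetries here (the terms $\Delta_{G(V)}\delta G(W) - \Delta_{G(W)}\delta G(V)$ survive uncancelled into $\sigma_1$, and the first-order and zeroth-order curvature terms of \Fref{lem:8} drop out by type alone), so your appeal to $\Gamma_1$ and $\Gamma_0$ is unnecessary but harmless.
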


\begin{proof}
  We verify that the third-order symbol vanishes. Using \Fref{prop:2}
  and its conjugated version, which rely heavily on rigidity on the
  family, we get 
  \begin{align*}
    \sigma_3\com{\Delta_{\tilde G(V)}}{\Delta_{\tilde G(W)}} &=
    \sigma_3\com{\nabla^2_{\tilde G(V)}}{\nabla^2_{\tilde G(W)}} \\
    &= 2 \Sym(\tilde G(V) \trdot \nabla \tilde G(W)) - 2 \Sym (\tilde G(W)
    \trdot \nabla \tilde G(V)) \\
    &= 2 \Sym(G(V) \trdot \nabla G(W)) - 2 \Sym (G(W) \trdot \nabla G(V))
    \\ & \qquad + 2 \Sym(\bar G(V) \trdot \nabla \bar G(W)) - 2 \Sym
    (\bar G(W) \trdot \nabla \bar G(V)) \\ &= 2 \Gamma_3(V,W) - 2 \bar
    \Gamma_3(W,V) + 2
    \Gamma_3(V,W) - 2 \bar \Gamma_3(W,V) \\
    &= 0.
  \end{align*}
  To calculate the second-order symbol, we first notice that
  \begin{align*}
    \sigma_2\com{\Delta_{G(V)}}{\Delta_{G(W)}} &= \:
    \sigma_2\com{\nabla^2_{G(V)}}{\nabla^2_{G(W)}} +
    \sigma_2\com{\nabla^2_{G(V)}}{\nabla_{\delta G(W)}} -
    \sigma_2\com{\nabla^2_{G(W)}}{\nabla_{\delta G(V)}} \\ &
    = \nabla^2_{G(V)} G(W) + \nabla_{\delta G(V)} G(W) + 2 \Sym(G(V)
    \trdot \nabla \delta G(W))     \\
    & \qquad - \nabla^2_{G(W)} G(V) -
    \nabla_{\delta G(W)} G(V) - 2 \Sym(G(W) \trdot \nabla \delta G(V)) \\
    &=\Gamma_2(V,W) - \Gamma_2(W,V) \\ &= 0.
  \end{align*}
  For the mixed-type terms of the second-order symbol, we first observe that
  \begin{align}
    \label{eq:2}
    \begin{aligned}
      G(V) \trdot \nabla \delta \bar G(W) &= G(V)^{xy} \nabla_y \nabla_u
      \bar G(W)^{uv} \\ &= G(V)^{xy} R^r_{yru} \bar G(W)^{uv} +
      G(V)^{xy} R^r_{yuv} \bar G(W)^{uv} \\ &= - G(V) \trdot r \trdot
      \bar G(W) + G(V)^{xy} R^r_{yuv} \bar G(W)^{uv}.
    \end{aligned}
  \end{align}
  Using this, we calculate that
  \begin{align*}
    &\sigma_2\com{\Delta_{G(V)}}{\Delta_{\bar G(W)}} \\ & \qquad =
    \: \sigma_2 \com{\nabla^2_{G(V)}}{\nabla^2_{\bar G(W)}} +
    \sigma_2\com{\nabla^2_{ G(V)}}{\nabla_{ \delta \bar G(W)}} -
    \sigma_2\com{\nabla^2_{\bar G(W)}}{\nabla_{\delta G(V)}} \\
    &\qquad = - 4ik \Sym(G(V) \trdot \omega \trdot \bar G(W)) + 2
    \Sym(G(V)^{xy} R^a_{uxy} \bar G(W)^{ub}) - 2 \Sym(\bar G(W)^{uv}
    R^a_{xuv} G(V)^{xb}) \\ & \qquad \qquad + 2 \Sym(G(V) \trdot \nabla
    \delta \bar G(W)) - 2 \Sym(\bar G(W)
    \trdot \nabla \delta G(V)) \\
    &\qquad = - 4ik \Sym(G(V) \trdot \omega \trdot \bar G(W)),
  \end{align*}
  where the last equality follows by inserting \eqref{eq:2} and its
  conjugate. In total, this means that the second-order symbol is
  given by
  \begin{align*}
    \sigma_2 \com{\Delta_{\tilde G(V)}}{\Delta_{\tilde G(W)}} &= - 4ik
    \Sym(G(V) \trdot \omega \trdot \bar G(W)) - 4ik \Sym(\bar G(V)
    \trdot \omega \trdot G(W)) \\ &= - 4ik \Sym(\tilde G(V) \trdot \omega
    \trdot \tilde G(W)) \\ & = - 4ik\, \Theta(V,W). \phantom{\tilde G(V)}
  \end{align*}
  Next, we find the first-order symbol. First of all, we get
  \begin{align*}
    \sigma_1\com{\Delta_{G(V)}}{\Delta_{G(W)}} &= \Delta_{G(V)}
    \delta G(W) - \Delta_{G(W)} \delta G(V).
  \end{align*}
  \noindent 
  For the terms of mixed type on $M$, we get the following computation,
  \begin{align*}
    &\sigma_1\com{\Delta_{G(V)}}{\Delta_{\bar G(W)}}
    \\
    & \quad = \sigma_1 \!\!\:\com{\nabla^2_{G(V)}}{\!\nabla^2_{\bar
        G(W)}} \!  + \sigma_1 \!\!\: \com{\nabla_{\delta
        G(V)}}{\!\nabla_{\delta \bar G(W)}} \! + \sigma_1 \!\!\:
    \com{\nabla^2_{G(V)}}{\!\nabla_{\delta \bar G(W)}} \!- \sigma_1
    \!\!\: \com{\nabla^2_{\bar G(W)}}{\!\nabla_{\delta G(V)}}
    \\
    & \quad = - \hcancel{2ik G(V)^{xy} \omega_{yu} \nabla_x (\bar
      G(W)^{ua})} + \hcancel{2ik \bar G(W)^{uv} \omega_{vx} \nabla_u
      (G(V)^{xa})}
    \\
    & \qquad \qquad - G(V)^{xy} \nabla_x (R^a_{yuv}) \bar G(W)^{uv} +
    \bar G(W)^{uv} \nabla_v (R^a_{uxy}) G(V)^{xy}
    \\
    & \qquad \qquad- \hcancel{\frac{4}{3} G(V)^{xy} R^a_{xuv} \nabla_y
      \bar G(W)^{uv}} + \hcancel{\frac{4}{3} \bar G(W)^{uv} R^a_{uxy}
      \nabla_v G(V)^{xy}}
    \\
    & \qquad\;\! - \delta G(V) \trdot r \trdot \bar G(W) +
    \hcancel{\delta G(V)^{x} R^a_{xuv} \bar G(W)^{uv}}
    \\
    & \qquad \qquad + \delta \bar G(W) \trdot r \trdot G(V) -
    \hcancel[0.2pt]{ \delta \bar G(W)^{u} R^a_{uxy} G(V)^{xy}}
    \\
    & \qquad\;\! + \nabla^2_{G(V)} \delta \bar G(W) - 2ik G(V) \trdot
    \omega \trdot \delta \bar G(W) + \hcancel{G(V)^{xy} R^a_{uxy}
      \delta \bar G(W)^u}
    \\
    & \qquad\;\! - \nabla^2_{\bar G(W)} \delta G(V) + 2ik \bar G(W)
    \trdot \omega \trdot \delta G(V) - \hcancel{\bar G(W)^{uv}
      R^a_{xuv} \delta G(V)^x}
    \\
    & \quad = - \hcancel{G(V)^{xy} \nabla_x (R^a_{yuv}) \bar
      G(W)^{uv}} + \hcancel{\bar G(W)^{uv} \nabla_v (R^a_{uxy})
      G(V)^{xy}}
    \\
    & \qquad\;\! - \delta G(V) \trdot r \trdot \bar G(W) + \delta \bar
    G(W) \trdot r \trdot G(V)
    \\
    & \qquad\;\!- G(V)^{xy} \nabla_x (r_{yu}) \bar G(W)^{ua} +
    \hcancel{G(V)^{xy} \nabla_x (R^a_{yuv}) \bar G(W)^{uv}} - 2ik G(V)
    \trdot \omega \trdot \delta \bar G(W)
    \\
    & \qquad\;\! + \bar G(W)^{uv} \nabla_u (r_{vx}) G(V)^{xa} -
    \hcancel{\bar G(V)^{uv} \nabla_u (R^a_{vxy}) G(V)^{xy}} + 2ik \bar
    G(W) \trdot \omega \trdot \delta G(V)
    \\
    & \quad = - \delta G(V) \trdot r \trdot \bar G(W) + \delta \bar
    G(W) \trdot r \trdot G(V)
    \\
    & \qquad\;\! - G(V)^{xy} \nabla_x (r_{yu}) \bar G(W)^{ua} - 2ik
    G(V) \trdot \omega \trdot \delta \bar G(W)
    \\
    & \qquad\;\! + \bar G(W)^{uv} \nabla_u (r_{vx}) G(V)^{xa} + 2ik
    \bar G(W) \trdot \omega \trdot \delta G(V)
    \\
    & \quad = - \delta (G(V) \trdot r) \trdot \bar G(W) \! + \!\delta
    (\bar G(W) \trdot r ) \trdot G(V)\! + \!2ik \delta (\bar G(W)
    \trdot \omega \trdot G(V)) \!- \! 2ik \delta (G(V) \trdot \omega
    \trdot \bar G(W))
    \\
    & \quad = - \delta (G(V) \trdot r) \trdot \bar G(W) + \delta (\bar
    G(W) \trdot r ) \trdot G(V) - 4ik \delta \Sym(G(V) \trdot \omega
    \trdot \bar G(W)),
  \end{align*}
  where we have indicated cancelling terms. Analagously, we compute
  \begin{align*}
    &\sigma_1\com{\Delta_{\bar G(V)}}{\Delta_{G(W)}} = \delta (G(W)
    \trdot r) \trdot \bar G(V) - \delta (\bar G(V) \trdot r ) \trdot
    G(W) - 4ik \delta \Sym(\bar G(V) \trdot \omega \trdot G(W)),
  \end{align*}
  so that finally
  \begin{align*}
    \sigma_1\com{\Delta_{\tilde G(V)}}{\Delta_{\tilde G(W)}} &= - 4i
    \delta (\Theta(V,W)) - \delta (\tilde G(V) \trdot r) \trdot \tilde
    G(W) + \delta (\tilde G(W) \trdot r ) \trdot \tilde G(V) \\ &
    \quad \;\! + \Delta_{G(V)} \delta G(W) - \Delta_{G(W)} \delta G(V)
    + \Delta_{\bar G(V)} \delta \bar G(W) - \Delta_{\bar G(W)} \delta
    \bar G(V).
  \end{align*}
  For the symbol of order zero, we first observe that
  \begin{align*}
    &\sigma_0\com{\Delta_{G(V)}}{\Delta_{G(W)}} = 0.
  \end{align*}
  For the mixed terms, we get
  \begin{align*}
    &\sigma_0\com{\Delta_{G(V)}}{\Delta_{\bar G(W)}} \\ &\quad =
    \sigma_0\com{\nabla^2_{G(V)}}{\!\nabla^2_{\bar G(W)}} \!+
    \sigma_0\com{\nabla_{\delta G(V)}}{\!\nabla_{\delta \bar G(W)}}
    \!+ \sigma_0 \com{\nabla^2_{G(V)}}{\!\nabla_{\delta \bar G(W)}}\!
    - \sigma_0 \com{\nabla^2_{\bar G(W)}}{\!\nabla_{\delta G(V)}} \\
    &\quad = - kG(V)^{xy} R_{xuvy} \bar G(W)^{uv} - ik \delta G(V)
    \trdot \omega \trdot \delta \bar G(W) \\ & \qquad\;\! - ik
    \omega(G(V) \trdot \nabla \delta \bar G(W)) + ik \omega (\bar G(W)
    \trdot \nabla \delta G(V)) \\ &\quad = - kG(V)^{xy} R_{xuvy} \bar
    G(W)^{uv} - ik \delta \delta (G(V) \trdot \omega \trdot \bar G(W))
    - ik \omega(G(V) \trdot \nabla \delta \bar G(W)) \\ & \quad =
    -ik\;\! \delta \delta (G(V) \trdot \omega \trdot \bar G(W)) +
    ik\;\! r (G(V) \trdot \omega \trdot \bar G(W)),
  \end{align*}
  where \eqref{eq:2} was applied for the last equation, and this
  finally gives
  \begin{align*}
    \sigma_0\com{\Delta_{\tilde G(V)}}{\Delta_{\tilde G(W)}} = -
    ik\;\! \delta \delta \Theta (V,W) + ik\;\! r( \Theta(V,W)).
  \end{align*}
  This proves the proposition.
\end{proof}

Before stating the next proposition, we introduce a one-form $\mc \in
\Omega^{1,0}(\mathcal{T}, C^\infty(M))$, with values in smooth
functions on $M$, which will play a central role in the
calculations. It is defined by the expression,
\begin{align}
  \label{eq:12}
  \mc(V) = - \Delta_{G(V)}F - d F \trdot G(V) \trdot d F - 2\ccint
  V'[F].
\end{align}
This one-form serves, in fact, as the zero-order part of the Hitchin
connection in metaplectic quantization, studied in \cite{MR2928088},
but as we will see, it also appears in the curvature of the Hitchin
connection in the setting considered here. In the metaplectic case,
the crucial property satisfied by this one-form is the following
relation,
\begin{align}
  \label{eq:34}
  \bar \d \mc(V) = \frac{i}{2} \delta(G(V) \trdot \rho),
\end{align}
which will also be useful to us.  This is verified through the
calculation
\begin{align*}
  2 \bar \d \Delta_{G(V)} F &= 2 \bar \d \delta (G(V) \trdot dF) \\
  &= - 2i \rho \trdot G(V) \trdot dF + 2\delta (G(V) \trdot \d \bar \d
  F) \\ &= -2i \ccint \omega \trdot G(V) \trdot dF + 4 \d \bar \d F
  \trdot G(V) \trdot dF - i \delta( G(V) \trdot \rho) + i \ccint
  \delta G(V) \trdot \omega \\ &= -4 \ccint \bar \d V'[F] - 2\bar \d
  (dF \trdot G(V) \trdot dF) - i\delta (G(V) \trdot \rho),
\end{align*}
where we applied \eqref{eq:23} twice for the third equality and
\eqref{eq:41} for the last equality.

As hinted above, the exterior derivative of the one-form $\mc$ over
$\mathcal{T}$ appears in the curvature of the Hitchin connection. To
see how, we must be able to recognize this derivative.
\begin{proposition}
  \label{prop:24}
  The exterior derivative of the one-form $\mc$ defined by
  \eqref{eq:12} is given by
  \begin{align*}
    \d_{\scriptscriptstyle \mathcal{T}}\mc(V,W) &= b(V)
    W'[F] - b(W) V'[F] \\
    \bar \d_{\scriptscriptstyle \mathcal{T}} \mc(V,W) &= \frac{i}{4}
    \delta\delta \Theta(V,W) - \frac{i}{4} r(\Theta (V,W)) - i \ccint
    \theta(V, W) - 2\ccint \d_{\scriptscriptstyle \mathcal{T}} \bar
    \d_{\scriptscriptstyle \mathcal{T}} F (V,W)
  \end{align*}
  for any vector fields $V$ and $W$ on $\mathcal{T}$.
\end{proposition}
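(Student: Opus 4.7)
The plan is to split $d_{\scriptscriptstyle \mathcal{T}}\mc$ by bidegree on $\mathcal{T}$. Since $\mc$ has type $(1,0)$, the two displayed formulas compute the $(2,0)$-part $\d_{\scriptscriptstyle \mathcal{T}}\mc$ and the $(1,1)$-part $\bar\d_{\scriptscriptstyle \mathcal{T}}\mc$ respectively. I would evaluate each on appropriately chosen commuting vector fields: the $(2,0)$-part on commuting $(1,0)$-fields $V, W$; and the $(1,1)$-part on commuting $V = V'$ of type $(1,0)$ and $W = W''$ of type $(0,1)$, where the fact that $\mc(W) = 0$ reduces the computation to $W[\mc(V)]$.

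For the $(2,0)$-part, I would expand $V[\mc(W)] - W[\mc(V)]$ by the product rule. The identity $d_{\scriptscriptstyle \mathcal{T}} G = -i\Theta$ of \eqref{eq:44}, combined with the fact that $\Theta$ has type $(1,1)$, yields $V[G(W)] = W[G(V)]$, so all variations of $G$ are symmetric in $V \leftrightarrow W$. The variation of the Levi-Civita connection along a $(1,0)$-field, given by \eqref{eq:8} with $\tilde G(V) = G(V)$, contracts against the pure $(2,0)$-tensor $G(W)$ and vanishes by type considerations (each term of \eqref{eq:8} pairs a $g$-lowered $G(W)$-index against a $G(V)$-index of the opposite type). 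What survives is the derivative of $F$, and the surviving pieces assemble directly into $b(V)W[F] - b(W)V[F]$, since the zero-order $-2\ccint V[F]\,W[F]$ contribution in $b(V)W[F]$ cancels its swap upon antisymmetrization of commuting $\mathcal{T}$-fields.

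For the $(1,1)$-part, the identity \eqref{eq:43} gives $W[G(V)] = i\Theta(V,W)$, and the Levi-Civita variation \eqref{eq:8} along $W$ uses $\tilde G(W) = \bar G(W)$. A careful inspection using rigidity and its conjugate $\nabla_{X'}\bar G(W) = 0$ shows that the Levi-Civita contributions from $G(V)^{ab}\nabla^2_{ab}F$ and from $(\delta G(V))^a\nabla_a F$ are equal and opposite, and therefore cancel. The resulting identity is $W[\mc(V)] = -i\Delta_{\Theta(V,W)}F - i\Theta(V,W)(dF,dF) - \Delta_{G(V)}W[F] - 2G(V)(dF,dW[F]) - 2\ccint WV[F]$, and the last term matches $-2\ccint\d_{\scriptscriptstyle \mathcal{T}}\bar\d_{\scriptscriptstyle \mathcal{T}}F(V,W)$ by commutativity of $V$ and $W$.

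Matching the remaining contributions against $\tfrac{i}{4}\delta\delta\Theta - \tfrac{i}{4}r(\Theta) - i\ccint\theta$ requires rewriting $dW[F]$ via the conjugate of \eqref{eq:41} in terms of $\delta\bar G(W) + 2\,dF\trdot\bar G(W)$ contracted with $\omega$, and substituting the Ricci potential identity $\rho = \ccint\omega + 2i\d\bar\d F$ to recognize a double-divergence structure producing $\tfrac{i}{4}\delta\delta\Theta - \tfrac{i}{4}r(\Theta)$ (exactly as in the $\sigma_0$-computation of \Fref{prop:3}, using \eqref{eq:2}) together with a $-i\ccint\theta$ term coming from the $\ccint\omega$-piece of $\rho$ paired against $G(V)\trdot\omega\trdot\bar G(W)$ via the definition \eqref{eq:22} of $\theta$. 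The main obstacle will be this final rearrangement: while the $(2,0)$-part reduces to a clean rigidity-driven symmetry cancellation, the $(1,1)$-part requires simultaneously invoking the Ricci potential identity, the holomorphic-family identities \eqref{eq:41} and \eqref{eq:43}, and the definition \eqref{eq:22} of $\theta$ to reorganize the $F$-dependent terms into the advertised curvature-type combination.
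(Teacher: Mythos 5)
Your proposal is correct and follows essentially the same route as the paper's proof: split $d_{\scriptscriptstyle\mathcal{T}}\mc$ by type, evaluate on commuting fields of the appropriate types, use the symmetry of $V'[G(W)]$ in $V$ and $W$ for the $(2,0)$-part, and combine \eqref{eq:43}, \eqref{eq:41}, the Ricci potential equation and the definition \eqref{eq:22} of $\theta$ for the $(1,1)$-part, with your intermediate formula for $W''[\mc(V)]$ agreeing with the paper's. The only cosmetic difference is that the paper disposes of the variation of the Levi-Civita connection by writing $\Delta_{G(V)}F=\delta(G(V)\trdot dF)$ and invoking the metric-independence of the divergence of vector fields, whereas you cancel the $V[\lcc]$-terms directly by type considerations and an ``equal and opposite'' cancellation (for which rigidity is in fact not needed, only \eqref{eq:48}).
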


\begin{proof}
  For the first statement, choose $V$ and $W$ so that $V'$ and $W'$
  commute. Using the identity $\Delta_{G(V)} F = \delta (G(V) \trdot
  dF)$ and the fact that the divergence operator on vector fields does
  not depend on the K\"ahler structure, we get
  \begin{align*}
    \d_{\scriptscriptstyle \mathcal{T}} \mc(V,W) &= V'[\mc (W)] -
    W'[\mc(V)] \\ &= - \Delta_{V'[G(W)]} F - \Delta_{G(W)} V'[F] - 2
    dF \trdot G(W) \trdot dV'[F] \\ & \qquad + \Delta_{W'[G(V)]} F +
    \Delta_{G(V)} W'[F] + 2 dF \trdot G(V) \trdot dW'[F] \\ &= b(V)
    W'[F] - b(W) V'[F],
  \end{align*}
  where we used the the fact that $W'[G(V)] = - W'V'[\tilde g] =
  V'[G(W)]$, since the vector fields were chosen to commute.

  \pagebreak[1] For the second statement, choose $V$ and $W$ so that
  $V'$ and $W''$ commute. Once again using $\Delta_{G(V)} F = \delta
  (G(V) \trdot dF)$, the identities \eqref{eq:43} and \eqref{eq:41}
  give us that
  \begin{align*}
    &4W''[\Delta_{G(V)}(F)] \\ &\qquad= 4\delta(W''[G(V)] \trdot dF) +
    4\delta( G(V) \trdot W''[F]) \\ & \qquad= i \delta (\delta \bar
    G(W) \trdot \omega \trdot G(V)) - 2i \delta (\bar G(W) \trdot
    \omega \trdot G(V) \trdot dF) \\ &\qquad= i \delta \delta (\bar
    G(W) \trdot \omega \trdot G(V)) - 2i \delta \bar G(W) \trdot
    \omega \trdot G(V) \trdot dF + 2i \d \bar \d F({\bar G(W) \trdot
      \omega \trdot G(V)}),
  \end{align*}
  where rigidity of the family of K\"ahler structures was used for the
  last equality. The Ricci potential satisfies the equation $\rho =
  \ccint \omega + 2i\d \bar \d F$, so we get that
  \begin{align*}
    2i\d \bar \d F({\bar G(W) \trdot \omega \trdot G(V)}) &= \rho(\bar
    G(W) \trdot \omega \trdot G(V)) - \ccint \omega (\bar G(W) \trdot
    \omega \trdot G(V) ) \\ &= ir(\Theta(V',W'')) + 4i\ccint
    \theta(V', W'').
  \end{align*}
  Finally, the identities \eqref{eq:43} and \eqref{eq:41} can be used
  to verify that
  \begin{align*}
    4W''[dF \trdot G(V) \trdot dF] = 2i \delta \bar G(W) \trdot \omega
    \trdot G(V) \trdot dF.
  \end{align*}
  Combining the identities above, we get
  \begin{align*}
    &4\,\bar \d_{\scriptscriptstyle \mathcal{T}} \mc(V',W'') \\ &
    \qquad = - 4W''[\Delta_{G(V)}F + d F \trdot G(V) \trdot d F +
    2nV'[F]] \\ & \qquad = i \delta\delta \Theta(V',W'') -
    ir(\Theta(V',W'')) - 4i\ccint \theta(V', W'') - 8\ccint
    \d_{\scriptscriptstyle \mathcal{T}}\bar \d_{\scriptscriptstyle
      \mathcal{T}} F(V', W'').
  \end{align*}
  For the first term of the last equality, we used the fact that
  repeated application of the divergence operator to a bivector field
  only depends on its symmetric part. This finishes the proof of the
  proposition.
\end{proof}

We will also need the following
\begin{proposition}
  \label{prop:8}
  The one-form $\mc$ defined in \eqref{eq:12} satisifes
  \begin{align*}
    X'_{\d_{\scale{0.7}{\scriptscriptstyle \mathcal{T}}} \mc (V,W)} =
    \frac{i}{4} \Delta_{G(W)} \delta G(V) - \frac{i}{4} \Delta_{G(V)}
    \delta G(W) + \frac{i}{2} G(W) \trdot d \mc(V) - \frac{i}{2} G(V)
    \trdot d \mc(W)
  \end{align*}
  for any vector fields $V$ and $W$ on $\mathcal{T}$.
\end{proposition}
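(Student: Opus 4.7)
The plan is to start from the first identity of \Fref{prop:24}, which presents $\d_{\scriptscriptstyle \mathcal{T}}\mc(V,W)$ as $b(V)W'[F] - b(W)V'[F]$, and to compute $X'$ of this function directly using the formula $X'_f = \bar\d f \trdot \tilde\omega$. Expanding $b(V) = \Delta_{G(V)} + 2\nabla_{G(V)\trdot dF} - 2\ccint V'[F]$, the scalar term $-2\ccint V'[F]W'[F]$ is manifestly symmetric in $V,W$ and drops, leaving the second-order piece $\Delta_{G(V)} W'[F]$ and the first-order piece $2\,G(V)\trdot dF\trdot dW'[F]$, together with their $V\leftrightarrow W$ counterparts, to handle.

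For the second-order piece, I would use rigidity (\Fref{def:4}) to push $\bar\d$ through $\Delta_{G(V)}$; since $G(V)$ is covariantly constant in the $(0,1)$-directions, the only non-commutativity is a single Ricci-type correction arising from the identity $[\nabla_{c''},\nabla_{a'}]Y^{a'} = R_{c''a'}{}^{a'}{}_b Y^b$ applied to $Y = G(V)\trdot \d W'[F]$. After this, \Fref{prop:10} substitutes $4i\bar\d W'[F] = (\delta G(W) + 2\,dF\trdot G(W)) \trdot \omega$. Since $\omega$ and $\tilde\omega$ are parallel, $\Delta_{G(V)}$ commutes with contraction by $\tilde\omega$, and the leading contribution reads $-\tfrac{i}{4}\Delta_{G(V)}\delta G(W) - \tfrac{i}{2}\Delta_{G(V)}(G(W)\trdot dF)$. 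Antisymmetrising in $V$ and $W$, the first summand supplies exactly $\tfrac{i}{4}[\Delta_{G(W)}\delta G(V) - \Delta_{G(V)}\delta G(W)]$, matching the first half of the claimed identity. The first-order piece $2\,G(V)\trdot dF\trdot dW'[F]$ is handled by an analogous computation, yielding a term $-\tfrac{i}{2}\nabla_{G(V)\trdot dF}[\delta G(W) + 2\,G(W)\trdot dF]$ together with a piece where $\bar\d$ lands on $dF$ itself.

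The main obstacle is then to package the remaining $dF$-dependent contributions and the curvature corrections into $-\tfrac{i}{2}G(V)\trdot d\mc(W) + \tfrac{i}{2}G(W)\trdot d\mc(V)$. Here I would use that $G(W)$ is of type $(2,0)$, so $G(W)\trdot d\mc(V) = G(W)\trdot \d\mc(V)$, together with the defining formula $\mc(V) = -\Delta_{G(V)}F - dF\trdot G(V)\trdot dF - 2\ccint V'[F]$ and the Ricci-potential equation $\rho = \ccint\omega + 2i\d\bar\d F$. The latter is precisely what converts the Ricci-curvature corrections from the previous step into the $dF$-dependent pieces of $\d\mc$. The symmetry relations of \Fref{prop:2} then ensure that any manifestly symmetric remainders drop out upon antisymmetrisation in $V$ and $W$, so that only the claimed combination $\tfrac{i}{2}[G(W)\trdot d\mc(V) - G(V)\trdot d\mc(W)]$ survives.
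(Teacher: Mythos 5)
Your strategy is sound and does reach the stated identity, but it is a genuinely different route from the paper's. The paper does \emph{not} start from the formula $\d_{\scriptscriptstyle \mathcal{T}}\mc(V,W) = b(V)W'[F] - b(W)V'[F]$; instead it writes $\bar\d\,\d_{\scriptscriptstyle \mathcal{T}}\mc(V,W) = \bar\d V'[\mc(W)] - \bar\d W'[\mc(V)]$ and exchanges $\bar\d$ with the $\mathcal{T}$-variation. The commutator $[\,W'[\cdot\,],\bar\d\,]$ acting on $\mc(V)$ produces the term $\tfrac{i}{2}\omega\trdot G(W)\trdot d\mc(V)$ essentially for free, so the $G\trdot d\mc$ half of the answer appears immediately; the $\Delta_{G}\delta G$ half then comes from differentiating the identity $\bar\d\mc(V) = \tfrac{i}{2}\delta(G(V)\trdot\rho)$ (equation \eqref{eq:34}) along $W'$, using the variation formula \eqref{eq:1234} for the Ricci form and the trace-free property \eqref{eq:48} of $W'[\lcc]$. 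That route never needs \Fref{prop:2}. Your route instead fixes the complex structure, applies $\bar\d$ on $M$ directly to $b(V)W'[F] - b(W)V'[F]$, and uses rigidity, \Fref{prop:10} and the Ricci potential equation to convert everything; your identification of the leading contribution $-\tfrac{i}{4}\Delta_{G(V)}\delta G(W)$ (with the correct sign and the correct single Ricci correction) is right, and the $\ccint$-dependent and $dF$-dependent pieces of $d\mc$ do emerge from $\rho = \ccint\omega + 2i\d\bar\d F$ as you say. The price is that the $G\trdot d\mc$ terms have to be reassembled piece by piece from the definition \eqref{eq:12}, and the non-$\mc$-shaped remainders only cancel after invoking the $\Gamma_2$, $\Gamma_3$ symmetries of \Fref{prop:2} -- in effect you are redoing, at the level of functions, the same bookkeeping the paper performs for $\sigma_1\com{b(V)}{b(W)}$ in \Fref{prop:22}. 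That packaging step is where almost all of the work lies and you only sketch it, but the tools you name are the correct ones, so the argument closes; it is simply substantially longer than the paper's, whose main advantage is that \eqref{eq:34} and the $[\bar\d, W'[\cdot\,]]$ commutator deliver the answer in a few lines without any appeal to \Fref{prop:2}.
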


\begin{proof}
  As usual, the Hamiltonian vector field in the statement is
  determined by
  \begin{align*}
    X'_{\d_{\scale{0.7}{\scriptscriptstyle \mathcal{T}}} \mc (V,W)}
    \trdot \omega = \bar \d (\d_{\scriptscriptstyle \mathcal{T}} \mc
    (V,W)).
  \end{align*}
  To calculate the right-hand side of this, we use the fact that $\mc$
  satisfies \eqref{eq:34} to get
  \begin{align*}
    \frac{i}{2} W'[\delta (G(V) \trdot \rho)] = W'[ \bar \d \mc(V)] =
    \bar \d W'[\mc(V)] - \frac{i}{2} \omega \trdot G(W) \trdot d
    \mc(V).
  \end{align*}

  \pagebreak[1] 
  \noindent On the other hand, we calculate
  \begin{align*}
    W'[\delta (G(V) \trdot \rho)] &= W'[\nabla_x( G(V)^{xy}
    \rho_{xa})] \\ &= W'[\lcc]^z_{zx} G(V)^{xy} \rho_{xa} -
    W'[\lcc]^z_{xa} G(V)^{xy}\rho_{yz} + \delta (W'[G(V) \trdot \rho])
    \\ &= \delta (W'[G(V)] \trdot \rho) + \delta (G(V) \trdot
    W'[\rho]),
  \end{align*}
  where the last equality uses \eqref{eq:8} and \eqref{eq:48} and type
  considerations. Applying \eqref{eq:1234}, we get
  \begin{align*}
    2\delta (G(V) \trdot W'[\rho]) &= \delta (G(V) \trdot d (\delta
    G(W) \trdot \omega)) = \delta (G(V) \trdot \nabla \delta G(W))
    \trdot \omega = \Delta_{G(V)} \delta G(W) \trdot \omega.
  \end{align*}
  Combining the previous three identities, we find
  \begin{align*}
    &\bar \d \d_{\scriptscriptstyle \mathcal{T}} \mc(V,W) = \bar \d
    V'[\mc(W)] - \bar \d W'[\mc(V)] \\ & \qquad = \frac{i}{2} \omega
    \trdot G(V) \trdot d \mc(W) - \frac{i}{2} \omega \trdot G(W)
    \trdot d \mc(V) + \frac{i}{4} \Delta_{G(W)} \delta G(V) \trdot
    \omega - \frac{i}{4} \Delta_{G(V)} \delta G(W) \trdot \omega,
  \end{align*}
  for commuting vector fields $V'$ and $W''$.  Raising the index with
  $\omega$ ends the proof.
\end{proof}

The curvature calculation for the Hitchin connection proceeds with
expressions for commutators involving the one-form $b$ defined in
\eqref{eq:50}.

\begin{proposition}
  \label{prop:22}
  The commutator of the operators $b(V)$ and $b(W)$, acting on
  sections of $\pqb$, is a first-order operator with symbols given by
  \begin{align*}
    \sigma_1\com{b(V)}{b(W)} &= 4i
    X'_{\d_{\scale{0.7}{\scriptscriptstyle \mathcal{T}}} \mc (V,W)}
    \\
    \sigma_0 \com{b(V)}{b(W)} &= -2\ccint \d_{\scriptscriptstyle
      \mathcal{T}} c(V,W),
  \end{align*}
  for any vector fields $V$ and $W$ on $\mathcal{T}$.
\end{proposition}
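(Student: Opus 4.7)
The plan is to expand the commutator $[b(V),b(W)]$ by bilinearity into the six types of commutators arising from the three summands $\Delta_{G(V)}$, $2\nabla_{G(V)\trdot dF}$, and $-2\ccint V'[F]$ of $b(V)$, compute each piece using the lemmas and propositions already at our disposal, and then assemble the total symbol-by-symbol. Specifically, I will use (the $(2,0)$ part of) \Fref{prop:3} for $[\Delta_{G(V)},\Delta_{G(W)}]$, \Fref{lem:7} for the cross term $[\Delta_{G(V)},\nabla_{G(W)\trdot dF}]$, the basic curvature formula $[\nabla_X,\nabla_Y]=\nabla_{[X,Y]}-ik\omega(X,Y)$ for $[\nabla_{G(V)\trdot dF},\nabla_{G(W)\trdot dF}]$, and for the multiplication-operator pieces the elementary identities $[\nabla^2_B,f]=2\nabla_{B\trdot df}+\Delta_B f$ (modulo lower-order terms that can be read off directly) and $[\nabla_X,f]=X[f]$.

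The key structural observations are as follows. First, only $[\Delta_{G(V)},\Delta_{G(W)}]$ can contribute symbols of order $\geq 2$; restricting the proof of \Fref{prop:3} to the pure $(2,0)$ calculation already carried out there, one sees that $\sigma_3$ vanishes by the symmetry of $\Gamma_3$ and $\sigma_2$ vanishes by the symmetry of $\Gamma_2$ from \Fref{prop:2}, so the full commutator is genuinely first-order. For the first-order symbol I collect: from $[\Delta_{G(V)},\Delta_{G(W)}]$ the contribution $\Delta_{G(V)}\delta G(W)-\Delta_{G(W)}\delta G(V)$; from the cross terms with $2\nabla_{G\trdot dF}$ the contributions $-4ik$-independent combinations built from $G(V)\trdot d(G(W)\trdot dF)$ and from $[\nabla^2_{G(V)},G(W)\trdot dF]$; from the $-2\ccint V'[F]$ multiplication pieces the combinations $-4\ccint G(V)\trdot dW'[F]+4\ccint G(W)\trdot dV'[F]$; and from $[\nabla_{G(V)\trdot dF},\nabla_{G(W)\trdot dF}]$ the Lie-bracket vector field. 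Using the definition $\mc(V)=-\Delta_{G(V)}F-dF\trdot G(V)\trdot dF-2\ccint V'[F]$, all of these terms rearrange into $\tfrac{i}{4}\Delta_{G(W)}\delta G(V)-\tfrac{i}{4}\Delta_{G(V)}\delta G(W)+\tfrac{i}{2}G(W)\trdot d\mc(V)-\tfrac{i}{2}G(V)\trdot d\mc(W)$, multiplied by $4i$, which is exactly $4i\,X'_{\d_{\scriptscriptstyle\mathcal{T}}\mc(V,W)}$ by \Fref{prop:8}.

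For the zero-order symbol, the contributions come from $\sigma_0$ of \Fref{lem:7} applied to the $[\Delta_{G(V)},\nabla_{G(W)\trdot dF}]$ pieces, from the curvature term $-ik\omega(G(V)\trdot dF,G(W)\trdot dF)$ that is $\ccint$-times-$k$-free once packaged properly, from the $\Delta_{G(V)}$ acting on the multiplication function $-2\ccint W'[F]$, and from the vector field $2\nabla_{G(V)\trdot dF}$ acting on $-2\ccint W'[F]$. Collecting antisymmetrically in $V,W$, these reorganize precisely into $-2\ccint(b(V)W'[F]-b(W)V'[F])$, which by the first identity of \Fref{prop:24} equals $-2\ccint \d_{\scriptscriptstyle\mathcal{T}}\mc(V,W)$.

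The main obstacle is purely bookkeeping: there are a dozen or so terms of each order, many involving the Ricci potential $F$ through $dF$, $V'[F]$, and combinations $G(V)\trdot dF$, and one must carefully separate holomorphic and anti-holomorphic behaviour in $\mathcal{T}$, repeatedly invoke rigidity of the family, and crucially recognize at the end that the resulting expressions are exactly the right-hand sides predicted by \Fref{prop:8} and \Fref{prop:24}. In particular, it is essential that one writes $b(V)$ in the form \eqref{eq:50} (rather than merging the $V'[F]$ absorbed into the $1/(4k+2\ccint)$ normalization) so that the $\ccint$-dependence in $\mc(V)$ is produced exactly by the multiplication pieces $-2\ccint V'[F]$, and no level-$k$ dependence survives in the final symbols.
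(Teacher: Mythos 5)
Your route is the paper's: expand $\com{b(V)}{b(W)}$ into the cross-commutators of the three summands of $b$, kill the top symbols using \Fref{prop:2}, and recognize the surviving first- and zero-order parts through \Fref{prop:8} and the first identity of \Fref{prop:24}. The first- and zero-order bookkeeping you outline is exactly what the paper carries out.

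There is, however, a genuine gap in your structural claim that ``only $\com{\Delta_{G(V)}}{\Delta_{G(W)}}$ can contribute symbols of order $\geq 2$.'' This is false: the cross terms $\com{\Delta_{G(V)}}{\nabla_{G(W)\trdot dF}}$ are commutators of a second-order with a first-order operator and are therefore generically of order two; by \Fref{lem:7} each carries the individually nonvanishing second-order symbol $2 \Sym(G(V) \trdot \nabla (G(W)\trdot dF)) - \nabla_{G(W)\trdot dF}\, G(V)$. Consequently the vanishing of $\sigma_2\com{b(V)}{b(W)}$ does not follow from the symmetry of $\Gamma_2$ alone. One must antisymmetrize the cross terms in $V$ and $W$ and observe that their total second-order contribution collapses to $6\,\Gamma_3(V,W)\trdot dF - 6\,\Gamma_3(W,V)\trdot dF$, which vanishes by a further application of \Fref{prop:2}, i.e.\ by rigidity. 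Without this additional cancellation your argument does not establish that the commutator is first-order, which is the crux of the proposition. The repair uses only tools you already invoke, but the step is essential and must be carried out explicitly.
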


\begin{proof}
  The vanishing of the third-order symbol is essentially
  \Fref{prop:3},
  \begin{align*}
    \sigma_3\com{b(V)}{b(W)} = \sigma_3
    \com{\Delta_{G(V)}}{\Delta_{G(W)}} = -4ik\Theta(V',W') = 0.
  \end{align*}
  Vanishing of the second-order symbol is seen through the following
  calculation using \Fref{lem:7} and \Fref{lem:8},
  \begin{align*}
    &\sigma_2\com{b(V)}{b(W)} \\ & \qquad =
    \sigma_2\com{\Delta_{G(V)}}{\Delta_{G(W)}} + 2
    \sigma_2\com{\Delta_{G(V)}}{\nabla_{G(W)\trdot dF}} - 2
    \sigma_2\com {\Delta_{G(W)}}{\nabla_{G(V) \trdot dF}} \\ & \qquad
    = 2\sigma_2 \com{\nabla^2_{G(V)}}{\nabla_{G(W)\trdot dF}} - 2
    \sigma_2 \com{\nabla^2_{G(W)}}{\nabla_{G(V)\trdot dF}} \\ & \qquad
    = 4 \Sym(G(V) \trdot \nabla (G(W)\trdot dF)) - 4 \Sym(G(W) \trdot
    \nabla (G(V)\trdot dF)) \\ & \qquad \qquad - 2 dF \trdot G(W)
    \trdot \nabla G(V) + 2 dF \trdot G(V) \trdot \nabla G(W) \\ &
    \qquad = 6 \Sym(G(V) \trdot \nabla G(W)) \trdot dF - 6 \Sym(G(W)
    \trdot \nabla G(V)) \trdot dF \\ &
    \qquad = 6 \Gamma_3(V,W) \trdot dF - 6 \Gamma_3(W,V) \trdot dF \\
    & \qquad = 0.
  \end{align*}

  The first-order symbol is the first non-vanishing part. We split its
  calculation in two by first calculating
  \begin{align*}
    &\sigma_1\com{\Delta_{G(V)}}{\nabla_{G(W)\trdot dF}} -
    \sigma_1\com {\Delta_{G(W)}}{\nabla_{G(V)\trdot dF}} \\ & \qquad =
    \sigma_1\com{\nabla^2_{G(V)}}{\nabla_{G(W)\trdot dF}} -
    \sigma_1\com{\nabla^2_{G(W)}}{\nabla_{G(V)\trdot dF}} \\ &\qquad
    \qquad + \sigma_1\com{\nabla_{\delta G(V)}}{\nabla_{G(W)\trdot
        dF}} - \sigma_1\com{\nabla_{\delta G(W)}}{\nabla_{G(V)\trdot
        dF}} \\ & \qquad = \nabla^2_{G(V)} (G(W)\trdot dF) -
    \nabla^2_{G(W)} (G(V)\trdot dF) + [\delta G(V), G(W) \trdot dF] -
    [\delta G(W), G(V) \trdot dF] \\ & \qquad = \Delta_{G(V)}
    (G(W)\trdot dF) - \Delta_{G(W)} (G(V)\trdot dF) - dF \trdot G(W)
    \trdot \nabla \delta G(V) + dF \trdot G(V) \trdot \nabla \delta
    G(W) \\ & \qquad = dF \trdot G(V) \trdot \nabla \delta G(W) - dF
    \trdot G(W) \trdot \nabla \delta G(V) \\ & \qquad \qquad +
    \Delta_{G(V)} (G(W) )\trdot dF + 2 G(V)^{xy} \nabla_y G(W)^{au}
    \nabla^2_{xu} F \\ & \qquad \qquad \qquad + G(W) \trdot d
    \Delta_{G(V)} F - G(W)^{au} \nabla_u (G(V)^{xy}) \nabla^2_{xy}F -
    G(W) \trdot \nabla \delta G(V) \trdot dF \\ & \qquad \qquad -
    \Delta_{G(W)} (G(V) )\trdot dF - 2 G(W)^{uv} \nabla_v G(V)^{ax}
    \nabla^2_{xu} F \\ & \qquad \qquad \qquad - G(V) \trdot d
    \Delta_{G(W)} F + G(V)^{ax} \nabla_x (G(W)^{uv}) \nabla^2_{uv}F +
    G(V) \trdot \nabla \delta G(W) \trdot dF \\ & \qquad = 3
    \Gamma_3(V,W)^{axy} \nabla^2_{xy} F - 3 \Gamma_3(W,V)^{axy}
    \nabla^2_{xy} F + \Gamma_2(V,W) \trdot dF - \Gamma_2(W,V) \trdot
    dF \\ & \qquad \qquad + G(W) \trdot d \Delta_{G(V)} F - G(V)
    \trdot d \Delta_{G(W)} F \\ & \qquad = G(W) \trdot d \Delta_{G(V)}
    F - G(V) \trdot d \Delta_{G(W)} F.
  \end{align*}
  Here \Fref{prop:2}, once again, played a central role. Similarly, we
  find that
  \begin{align*}
    2\sigma_1 \com{\nabla_{G(V)\trdot dF}}{\nabla_{G(W)\trdot dF}} & =
    2\com{G(V)\trdot dF}{G(W)\trdot dF} \\ & = 3 dF \trdot
    \Gamma_3(V,W) \trdot dF - 3 dF \trdot \Gamma_3(W,V) \trdot dF \\ &
    \qquad + G(W) \trdot d( dF
    \trdot G(V) \trdot dF) - G(V) \trdot d( dF \trdot G(W) \trdot dF) \\
    & = G(W) \trdot d( dF \trdot G(V) \trdot dF) - G(V) \trdot d( dF
    \trdot G(W) \trdot dF).
  \end{align*}
  Combining the last two computations, we see that
  \begin{align*}
    \sigma_1 \com{b(V)}{b(W)} &= \Delta_{G(V)} \delta G(W) -
    \Delta_{G(V)} \delta G(W) \\ & \qquad + 2G(W) \trdot d
    \Delta_{G(V)} F - 2 G(V) \trdot d \Delta_{G(W)} F \\ & \qquad + 2
    G(W) \trdot d( dF \trdot G(V) \trdot dF) - 2 G(V) \trdot d( dF
    \trdot G(W) \trdot dF) \\ & \qquad + 4\ccint G(W) \trdot d V'[F] -
    4\ccint G(V) \trdot d W'[F] \\ &= \Delta_{G(V)} \delta G(W) -
    \Delta_{G(V)} \delta G(W) \\ & \qquad + 2 G(V) \trdot d \mc(W) - 2
    G(W) \trdot d \mc(V),
  \end{align*}
  where the last equation follows by recalling \eqref{eq:12}.
  
  Finally, for the zeroth-order symbol, we rely on \Fref{prop:24} to
  get that
  \begin{align*}
    \sigma_0 \com{b(V)}{b(W)} &= 2\ccint b(W) V'[F] - 2\ccint b(V)
    W'[F] = -2\ccint \d_{\scriptscriptstyle \mathcal{T}} \mc(V,W).
  \end{align*}
  This finishes the proof of the proposition.
\end{proof}

\pagebreak[2] Finally, to calculate the curvature of the Hitchin
connection, we need the extorior derivative of the one-form $b$ over
$\mathcal{T}$. This is calculated in the following proposition.

\begin{proposition}
  \label{prop:11}
  The two-form $d_{\scriptscriptstyle \mathcal{T}}b \in
  \Omega^2(\mathcal{T}, \mathcal{D}(M, \mathcal{L}^k))$ is given by
  \begin{align*}
    d_{\scriptscriptstyle \mathcal{T}} b(V, W) &= - i
    \Delta_{\Theta(V,W)} - 2i\nabla_{\Theta(V,W) \trdot dF} - 2
    \nabla_{G(V)\trdot dW[F] - G(W) \trdot dV[F]} + 2 \ccint
    \d_{\scriptscriptstyle \mathcal{T}} \bar \d_{\scriptscriptstyle
      \mathcal{T}} F(V,W)
  \end{align*}
  on sections of $\pqb$, and by
  \begin{align*}
    d_{\scriptscriptstyle \mathcal{T}} b(V, W) &= 2 \nabla_{G(W)
      \trdot V'[F]} - 2 \nabla_{G(V) \trdot W'[F]} - 2ik \theta(V,W) +
    2\ccint \d_\mathcal{T} \bar \d_{\scriptscriptstyle \mathcal{T}}
    F(V, W)
  \end{align*}
  when restricted to sections of $\qb$.
\end{proposition}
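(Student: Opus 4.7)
The plan is to choose commuting vector fields $V, W$ on $\mathcal{T}$ (permissible by tensoriality of both sides of the claim), so that $d_\mathcal{T} b(V,W) = V[b(W)] - W[b(V)]$, and then to differentiate each of the three summands of $b(W) = \Delta_{G(W)} + 2\nabla_{G(W)\trdot dF} - 2\ccint W'[F]$ along $V$, antisymmetrize, and sum.

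For the first summand, the $G$-variation contributes $\Delta_{V[G(W)] - W[G(V)]} = -i\Delta_{\Theta(V,W)}$ by \eqref{eq:3}. The Levi-Civita corrections arising from \eqref{eq:8} are, a priori, both second- and first-order operators, but they will cancel in the antisymmetrization exactly via the rigidity symmetries of \Fref{prop:2} (this parallels the mechanism already employed in the proof of \Fref{prop:3}). For the second summand, the same $G$-variation plus cancellation mechanism contributes $-2i\nabla_{\Theta(V,W)\trdot dF}$, while differentiating $dF$ through $V,W$ yields the term $-2\nabla_{G(V)\trdot dW[F] - G(W)\trdot dV[F]}$. For the third summand, since $V'[F] = (\d_\mathcal{T} F)(V)$, one has $V[W'[F]] - W[V'[F]] = d_\mathcal{T}(\d_\mathcal{T} F)(V,W) = -\d_\mathcal{T}\bar\d_\mathcal{T} F(V,W)$, and multiplication by $-2\ccint$ gives the final $2\ccint\,\d_\mathcal{T}\bar\d_\mathcal{T} F(V,W)$ term. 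Summing the three contributions establishes the first formula.

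For the second formula, I will evaluate the first formula on a section $s$ of $\qb$ and simplify. The key observation is that $\Theta(V,W)$ is of pure $(1,1)$ type on $M$, since the pure-type products $G\trdot\omega\trdot G$ and $\bar G\trdot\omega\trdot\bar G$ both vanish. On a K\"ahler manifold the Christoffels of mixed type vanish, so for holomorphic $s$ one has $(\nabla^2 s)_{a'b''} = 0$ and $(\nabla^2 s)_{a''b'} = -ik\omega_{a''b'}s$ from the prequantum curvature $F_\nabla = -i\omega$. Combined with $\omega_{a''b'} = -ig_{a''b'}$ and the metric-trace identity \eqref{eq:22}, this gives $\nabla^2_{\Theta(V,W)}s = 2k\theta(V,W)s$, so the second-order contribution $-i\nabla^2_{\Theta(V,W)}s$ from the first formula collapses to the zeroth-order $-2ik\theta(V,W)s$ of the second formula.

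It then remains to verify that the surviving first-order operators from the first formula, namely $-i\nabla_{\delta\Theta(V,W)} - 2i\nabla_{\Theta(V,W)\trdot dF} - 2\nabla_{G(V)\trdot dW[F] - G(W)\trdot dV[F]}$, collapse on $\qb$ to the first-order part $2\nabla_{G(W)\trdot dV'[F] - G(V)\trdot dW'[F]}$ of the second formula. Splitting $dV[F] = dV'[F] + dV''[F]$, the $dV'[F]$ pieces already match by inspection. For the residual $dV''[F]$ pieces, I invoke the complex conjugate of \eqref{eq:41}, namely $4i\,\d V''[F] = -\delta\bar G(V)\trdot\omega - 2\,dF\trdot\bar G(V)\trdot\omega$, together with rigidity (which forces $\delta\Theta(V,W)$ to be expressible purely in terms of $\delta\bar G$ contracted with $G$ via $\omega$), to check that these cancel against $-i\nabla_{\delta\Theta(V,W)} - 2i\nabla_{\Theta(V,W)\trdot dF}$. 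The main obstacle in both parts is the bookkeeping: in the first, showing that all Levi-Civita corrections from \eqref{eq:8} really cancel in the antisymmetrization via the symmetries of \Fref{prop:2}; in the second, that on $\qb$ the residual $dV''[F]$ and $dW''[F]$ terms telescope precisely with $\delta\Theta(V,W)$ and $\Theta(V,W)\trdot dF$ through \eqref{eq:41}, leaving no residue.
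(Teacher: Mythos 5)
Your proposal is correct and follows essentially the same route as the paper: term-by-term variation of $b$ along commuting vector fields, with $d_{\scriptscriptstyle \mathcal{T}} G = -i\Theta$ producing the $\Theta$-terms, and, on sections of $\qb$, the collapse of $\nabla^2_{\Theta(V,W)}$ to $2k\,\theta(V,W)$ via the prequantum curvature, combined with the conjugate of \eqref{eq:41} and rigidity to absorb the residual $dV''[F]$ and $dW''[F]$ pieces. The one inaccuracy is your account of the Levi-Civita corrections to $W[\Delta_{G(V)}]$: they are first-order only and cancel identically within each single variation by the trace identity \eqref{eq:48} (a consequence of $\tilde G$ having no $(1,1)$-part), so neither antisymmetrization nor the rigidity symmetries of \Fref{prop:2} is needed at that step.
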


\begin{proof}
  For any section $s$ of $\pqb$ over $\mathcal{T}$, we have the
  identity
  \begin{align}
    \label{eq:6}
    \begin{aligned}
      W[\Delta_{G(V)}]s &= W[\nabla_x G(V)^{xy} \nabla_y]s \\ &=
      W[\lcc]^z_{zx} G(V)^{xy} \nabla_y + \nabla_x W[G(V)]^{xy}
      \nabla_y s \\ &= \Delta_{W[G(V)]} s,
    \end{aligned}
  \end{align}
  where the last equality follows from \eqref{eq:48}. In particular,
  for commuting $V$ and $W$, we get
  \begin{align*}
    V[\Delta_{G(W)}] - W[\Delta_{G(V)}] = \Delta_{V[G(W)]} -
    \Delta_{W[G(V)]} = \Delta_{d_{\scriptscriptstyle \mathcal{T}}
      G(V,W)} = -i \Delta_{\Theta(V,W)},
  \end{align*}
  where \eqref{eq:44} was used for the last equation.  Similarly, we
  calculate
  \begin{align*}
    W[\nabla_{G(V) \trdot dF}] s &= \nabla_{G(V)\trdot W[F]} s +
    \nabla_{W[G(V)] \trdot dF} s,
  \end{align*}
  so that finally
  \begin{align}
    \label{eq:46}
    & d_{\scriptscriptstyle \mathcal{T}} b(V, W) = V[b(W)] - W[b(V)]
    \notag \\ & \qquad = -i \Delta_{\Theta(V,W)} -
    2i\nabla_{\Theta(V,W) \trdot dF} - 2\nabla_{G(V)\trdot dW[F] -
      G(W) \trdot dV[F]} + 2\ccint \d_{\scriptscriptstyle \mathcal{T}}
    \bar \d_{\scriptscriptstyle \mathcal{T}} F(V,W).
  \end{align}
  This proves the first statement of the proposition.

  For the second statement, observe that the stated formulas on
  sections of $\pqb$ and $\qb$ agree for $\d_{\scriptscriptstyle
    \mathcal{T}} b$. For the case of $\bar \d_{\scriptscriptstyle
    \mathcal{T}} b$, suppose that $s$ is a section of $\qb$ and
  observe that
  \begin{align*}
    2 \nabla^2_{\Theta(V',W'')} s = - \nabla^2_{\bar G(W) \trdot
      \omega \trdot G(V)} s = ik \omega (\bar G(W) \trdot \omega
    \trdot G(V)) s = 4k \theta(V',W'') s.
  \end{align*}
  Using the conjugated version of \eqref{eq:41}, we also get that
  \begin{align*}
    2\nabla_{\delta \Theta(V',W'')} s + 4\nabla_{\Theta(V',W'') \trdot
      dF} s &= \nabla_{ G(V) \trdot \omega \trdot \delta \bar G(W)} s
    + 2\nabla_{G(V)\trdot \omega \trdot \bar G(W) \trdot dF} s \\ &=
    4i\nabla_{G(V) \trdot dW''[F]} s.
  \end{align*}
  Finally, plugging these two expressions into \eqref{eq:46}, we get
  \begin{align*}
    d_{\scriptscriptstyle \mathcal{T}} b(V', W'')s &= - 2 \nabla_{G(V)
      \trdot W'[F]} s - 2ik \theta(V',W'') s + 2\ccint \d_\mathcal{T}
    \bar \d_{\scriptscriptstyle \mathcal{T}} F(V', W'') s,
  \end{align*}
  which finishes the proof of the proposition.
\end{proof}

With the above propositions at hand, calculating the curvature of the
Hitchin connection is a straightforward matter.

\begin{theorem}
  \label{thm:4}
  The curvature of the Hitchin connection acts by
  \begin{align*}
    \hcc^{2,0} = \frac{k}{(2k+\ccint)^2} P_k(\d_{\scriptscriptstyle
      \mathcal{T}} \mc) \qquad \quad \hcc^{1,1} = -
    \frac{ik}{2k+\ccint} (\theta - 2i \d_{\scriptscriptstyle
      \mathcal{T}} \bar \d_{\scriptscriptstyle \mathcal{T}} F) \qquad
    \quad \hcc^{0,2} = 0,
  \end{align*}
  on sections of the bundle $\qb$.
\end{theorem}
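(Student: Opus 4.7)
The plan is to write $\boldnabla = \trivcon + a$ with $a(V) = \tfrac{1}{4k+2\ccint}b(V) + V'[F]$ and expand the curvature by the standard formula
\begin{align*}
  \hcc(V,W) = d_{\scriptscriptstyle \mathcal{T}} a(V,W) + [a(V), a(W)].
\end{align*}
Writing $\eta(V) = V'[F]$, so that $\eta = \d_{\scriptscriptstyle \mathcal{T}} F$ is a $(1,0)$-form on $\mathcal{T}$ with values in $C^\infty(M)$, and using $[\eta(V),\eta(W)] = 0$, this decomposes as
\begin{align*}
  \hcc(V,W) &= \tfrac{1}{4k+2\ccint}\, d_{\scriptscriptstyle \mathcal{T}} b(V,W) + d_{\scriptscriptstyle \mathcal{T}} \eta(V,W) \\
  &\quad + \tfrac{1}{(4k+2\ccint)^2}[b(V), b(W)] + \tfrac{1}{4k+2\ccint}\bigl([b(V), W'[F]] - [b(W), V'[F]]\bigr).
\end{align*}
Since $G(V) = G(V')$, both $b$ and $\eta$ are of type $(1,0)$ on $\mathcal{T}$, so all four operator-valued terms vanish on two anti-holomorphic inputs, and $d_{\scriptscriptstyle \mathcal{T}} \eta$ is of type $(1,1)$; hence $\hcc^{0,2} = 0$ with no further argument.

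For the $(2,0)$-part, restrict to two holomorphic vector fields. Because $\Theta$ and $\d\bar\d F$ are of type $(1,1)$, \Fref{prop:11} reduces $d_{\scriptscriptstyle \mathcal{T}} b(V,W)$ to $-2\nabla_{G(V)\trdot dW'[F] - G(W)\trdot dV'[F]}$. A direct computation from $[\nabla^2_B, f] = \nabla^2_B f + 2\nabla_{B\trdot df}$ and $[\nabla_X,f] = X[f]$ yields
\begin{align*}
  [b(V), W'[F]] - [b(W), V'[F]] = 2\nabla_{G(V)\trdot dW'[F] - G(W)\trdot dV'[F]} + b(V)W'[F] - b(W)V'[F],
\end{align*}
and by \Fref{prop:24} the last two terms are $\d_{\scriptscriptstyle \mathcal{T}} \mc(V,W)$. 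The $\nabla$-contributions from $\tfrac{1}{4k+2\ccint}d_{\scriptscriptstyle \mathcal{T}} b$ and $\tfrac{1}{4k+2\ccint}[b,\eta]$ thus cancel, leaving a single $\tfrac{1}{4k+2\ccint}\d_{\scriptscriptstyle \mathcal{T}} \mc$. The remaining input is \Fref{prop:22}, which gives $[b(V), b(W)] = 4i\nabla_{X'_{\d_{\scriptscriptstyle \mathcal{T}} \mc(V,W)}} - 2\ccint\,\d_{\scriptscriptstyle \mathcal{T}} \mc(V,W)$; on $\qb$ the identity $\nabla_{X_f} = -ik(P_k(f) - f)$, combined with $\nabla_{X''_f}s = 0$ for holomorphic $s$, rewrites this as $4k P_k(\d_{\scriptscriptstyle \mathcal{T}} \mc) - (4k+2\ccint)\d_{\scriptscriptstyle \mathcal{T}} \mc$. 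After dividing by $(4k+2\ccint)^2 = 4(2k+\ccint)^2$ and summing, the $\d_{\scriptscriptstyle \mathcal{T}} \mc$ pieces cancel, producing $\hcc^{2,0} = \tfrac{k}{(2k+\ccint)^2} P_k(\d_{\scriptscriptstyle \mathcal{T}} \mc)$.

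For the $(1,1)$-part, evaluate on $(V',W'')$: both commutator terms vanish because $b$ and $\eta$ are of type $(1,0)$. The formula of \Fref{prop:11} restricted to $\qb$, together with the identity $\nabla^2_{\Theta(V',W'')}s = 2k\,\theta(V',W'')s$ on holomorphic sections (as used in its proof), reduces $d_{\scriptscriptstyle \mathcal{T}} b(V',W'')$ on $\qb$ to $-2ik\theta + 2\ccint\, \d_{\scriptscriptstyle \mathcal{T}}\bar\d_{\scriptscriptstyle \mathcal{T}} F$. Adding $d_{\scriptscriptstyle \mathcal{T}} \eta(V',W'') = -\d_{\scriptscriptstyle \mathcal{T}}\bar\d_{\scriptscriptstyle \mathcal{T}} F(V',W'')$ and multiplying by the prefactor, an elementary arithmetic rearrangement gives $\hcc^{1,1} = -\tfrac{ik}{2k+\ccint}\bigl(\theta - 2i\,\d_{\scriptscriptstyle \mathcal{T}} \bar\d_{\scriptscriptstyle \mathcal{T}} F\bigr)$.

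The main obstacle has really already been overcome in the preparatory results: rigidity, through \Fref{prop:2}, forces the third- and second-order symbols of $[b(V),b(W)]$ to vanish (so the curvature is a first-order operator), and \Fref{prop:8} rewrites the first-order symbol as the Hamiltonian of $\d_{\scriptscriptstyle \mathcal{T}} \mc$. With those in hand, the remaining work is the careful bookkeeping above, in particular the type decomposition on $\mathcal{T}$ and the conversion of the prequantum covariant derivative $\nabla_{X_f}$ into the prequantum operator $P_k(f)$ when restricted to $\qb$.
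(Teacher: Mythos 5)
Your proposal is correct and follows essentially the same route as the paper's proof: the same splitting $\boldnabla = \trivcon + \tfrac{1}{4k+2\ccint}b + \d_{\scriptscriptstyle\mathcal{T}}F$, the same inputs (\Fref{prop:11}, \Fref{prop:22}, \Fref{prop:24}), and the same cancellations, with your version merely spelling out in more detail the type decomposition on $\mathcal{T}$ and the conversion of $\nabla_{X'_{\d_{\scriptscriptstyle\mathcal{T}}\mc}}$ into the prequantum operator $P_k(\d_{\scriptscriptstyle\mathcal{T}}\mc)$ on $\qb$. No gaps.
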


\begin{proof}
  For commuting vector fields $V$ and $W$, we get
  \begin{align*}
    &\hcc(V,W) = \com{\boldnabla_V}{\boldnabla_W} \\
    & \qquad = \frac{\com{b(V)}{b(W)}}{(4k+2\ccint)^2} +
    \frac{d_{\scriptscriptstyle \mathcal{T}}b(V,W) + \com{b(V)}{W'[F]}
      - \com{b(W)}{V'[F]}}{4k+2\ccint} - \d_{\scriptscriptstyle
      \mathcal{T}}\bar \d_{\scriptscriptstyle \mathcal{T}} F(V,W).
  \end{align*}
  Now \Fref{prop:24} can be applied to see that
  \begin{align*}
    \com{b(V)}{W'[F]} - \com{b(W)}{V'[F]} = 2\nabla_{G(V) \trdot
      W'[F]} - 2\nabla_{G(W) \trdot V'[F]} + \d_{\scriptscriptstyle
      \mathcal{T}}\mc(V,W),
  \end{align*}
  and combining the above with \Fref{prop:22} and \Fref{prop:11}
  yields the following expression for the (2,0)-part of the curvature
  \begin{align*}
    \hcc^{2,0}(V,W) &= \frac{\com{b(V)}{b(W)}}{(4k+2\ccint)^2} +
    \frac{\d_{\scriptscriptstyle \mathcal{T}}\mc(V,W)}{4k+2\ccint} =
    \frac{k}{(2k+\ccint)^2} P_k(\d_{\scriptscriptstyle
      \mathcal{T}}\mc(V,W)),
  \end{align*}
  where $P_k$ denotes the prequantum operator defined in \eqref{eq:9}.
  For the (1,1)-part of the curvature, \Fref{prop:11} yields
  \begin{align*}
    &\hcc^{1,1}(V,W) = \frac{\bar \d_{\scriptscriptstyle \mathcal{T}}
      b(V,W)}{4k+2\ccint} - \d_{\scriptscriptstyle \mathcal{T}}\bar
    \d_{\scriptscriptstyle \mathcal{T}} F(V,W) = -
    \frac{ik}{2k+\ccint} (\theta - 2i \d_{\scriptscriptstyle
      \mathcal{T}} \bar \d_{\scriptscriptstyle \mathcal{T}} F).
  \end{align*}
  Finally, the (0,2)-part of the curvature clearly vanishes, and the
  theorem is proved.
\end{proof}

The fact that the (1,1)-part of the curvature is an zeroth-order
operator, combined with the fact that the Hitchin connection preserves
the subbundle of quantum spaces $\qb$ implies that the (1,1)-part must
take values in holomorphic and hence constant functions on $M$. This
was, however, already known from \Fref{prop:1} for the particular
expression we found in \Fref{thm:4}.  Also, being a first-order
operator, it is no surprise that the (2,0)-part of the curvature acts
by prequantum operator. After all, prequantum operators of the form
\eqref{eq:7} are the only first-order operators which preserve the
holomorphic sections.

\subsection*{The Hitchin-Witten Connection}

In the previous section, we saw how the higher-order symbols of the
curvature of the Hitchin connection vanished and left a first-order
operator with a relatively simple expression. For the (2,0)-part, the
higher order symbols vanished for general reasons, related to rigidity
of the family of K\"ahler structures, but for the (1,1)-part, the
curvature reduced to a zero-order operator only when restricted the
subbundle $\qb$ of quantum spaces.

It turns out that, with a slight modification of the formula, we can
achieve a cancellation of higher-order terms for the (1,1)-part of the
connection defined on the whole bundle $\pqb$ of prequantum spaces
over $\mathcal{T}$, but still maintain the vanishing of higher-order
symbols for the (2,0) and (0,2)-parts.

In this section, we let $t \in \setC$ be any complex number with
integer real part, $k = \re(t) \in \setZ$, and we consider the
connection on $\pqb$ given by
\begin{align}
  \label{eq:38}
  \tilde \boldnabla_V = \trivcon_V + \frac{1}{2t} b(V) -
  \frac{1}{2\bar t} \bar b(V) + V[F],
\end{align}
where $\bar b (V)$ has the conjugated symbols of $b(V)$ so that
\begin{align*}
  b(V) = \Delta_{ G(V)} + 2 \nabla_{ G(V) \trdot dF} - 2\ccint V'[F]
  \quad \text{and} \quad \bar b(V) = \Delta_{\bar G(V)} + 2 \nabla_{
    \bar G(V) \trdot dF} - 2\ccint V''[F].
\end{align*}
We will refer to $\tilde \boldnabla$ as the Hitchin-Witten
connection. It is a generalization of the connection for quantum
Chern-Simons theory with complex gauge group $\SL(n, \setC)$ discussed
by Witten in \cite{MR1099255}, where he arrives at exactly the formula
\eqref{eq:38}. This relation will be further explored in the final
section of the paper.

We will prove that the curvature of $\tilde \boldnabla_V$, acting on
sections of $\pqb$, has essentially the same expression as the
curvature of $\boldnabla$, acting on sections of $\qb$. We start with
the following.

\begin{proposition}
  \label{prop:12}
  The commutator of the operators $b(V)$ and $\bar b(W)$, acting on
  sections of $\pqb$, is a second-order operator with symbols given by
  \begin{align*}
    \sigma_2\com{b(V)}{\bar b(W)} &= - 4ik \;\! \Theta(V',W'') \\
    \sigma_1\com{b(V)}{\bar b(W)} &= - 4ik \;\! \delta \Theta(V',W'')
    - 8ik \;\! \Theta(V',W'') \trdot dF
    \\
    \sigma_0\com{b(V)}{\bar b(W)} &= - \frac{ik}{4}\;\! \delta \delta
    \Theta (V',W'') - \frac{ik}{4}\;\! r(\Theta(V',W'')) - 2ik\ccint
    \;\!  \theta(V',W'') \\ &\qquad - ik \;\! dF \trdot \Theta(V',W'')
    \trdot dF - ik \;\! \delta \Theta (V',W'') \trdot dF,
  \end{align*}
  and furthermore
  \begin{align*}
    &\:\! \sigma_0\com{b(V)}{\bar b(W)} = 2\bar t \, b(V)W''[F] + 2t
    \, \bar b(W) V'[F] - 4ik\ccint \, \theta(V',W''),
  \end{align*}
  for any vector fields $V$ and $W$ on $\mathcal{T}$.
\end{proposition}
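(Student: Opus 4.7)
The plan is to expand $\com{b(V)}{\bar b(W)}$ bilinearly, using the decomposition $b(V) = \Delta_{G(V)} + 2\nabla_{G(V)\trdot dF} - 2\ccint V'[F]$ and the analogous one for $\bar b(W)$. This produces nine sub-commutators involving pairs of $\Delta$'s, $\nabla$'s and multiplication operators, each of which is evaluated using \Fref{lem:7} or \Fref{lem:8}. The structure mirrors the computation in the proof of \Fref{prop:3} but tracks only the mixed-type contributions (one $G$-factor and one $\bar G$-factor).

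The third-order symbol vanishes by pure type considerations: rigidity makes $\nabla G(V)$ of type $(1,0)$ in its first covariant slot and $\nabla \bar G(W)$ of type $(0,1)$, so both contractions $G(V)\trdot \nabla \bar G(W)$ and $\bar G(W) \trdot \nabla G(V)$ that would appear in $\sigma_3$ vanish identically. The second-order symbol comes almost entirely from $\sigma_2\com{\nabla^2_{G(V)}}{\nabla^2_{\bar G(W)}}$, which by \Fref{lem:8} yields $-4ik\,\Theta(V',W'')$ after the Ricci curvature contractions cancel against the $\nabla\delta$-terms, exactly as in the mixed-type calculation of \Fref{prop:3}. For the first-order symbol one collects the $\Theta\trdot dF$ pieces from the cross-commutators $\com{\nabla^2_{G(V)}}{\nabla_{\bar G(W)\trdot dF}}$ and its partner (evaluated by \Fref{lem:7}), together with the $\delta \Theta(V',W'')$ piece inherited from $\sigma_1 \com{\Delta_{G(V)}}{\Delta_{\bar G(W)}}$. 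The zeroth-order symbol combines the mixed-type output of \Fref{prop:3} with the $dF\trdot \Theta\trdot dF$ contribution from $\com{\nabla_{G(V)\trdot dF}}{\nabla_{\bar G(W)\trdot dF}}$ and with the $-2ik\ccint\,\theta$ term produced by \Fref{lem:7} applied between the $\Delta$-operators and the multiplications by $V'[F]$ and $W''[F]$.

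The main obstacle is the alternative expression for $\sigma_0$. My strategy is to expand $b(V)W''[F]$ and $\bar b(W)V'[F]$ as the differential operators $b(V), \bar b(W)$ applied to the functions $W''[F]$ and $V'[F]$, and then substitute \Fref{prop:10} (and its complex conjugate) to convert $\bar \d V'[F]$ and $\d W''[F]$ into expressions involving $\delta G(V)$, $\delta \bar G(W)$, $dF$ and $\omega$. Combined with the defining identity $\rho = \ccint\,\omega + 2i\d\bar\d F$, this should rewrite the $\Delta_{G(V)} W''[F]$ and $\Delta_{\bar G(W)} V'[F]$ contributions in terms of $\delta\delta\Theta$, $r(\Theta)$ and $\theta$, matching the first expression once the real part constraint $t + \bar t = 2k$ is used to combine the $\tfrac{1}{t}$ and $\tfrac{1}{\bar t}$ coefficients. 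The residual $-4ik\ccint\,\theta$ will arise as the leftover $\ccint V'[F]\,W''[F]$ cross-multiplication pieces that do not cancel between the two sides, completing the identification.
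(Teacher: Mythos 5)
Your overall strategy coincides with the paper's: expand $\com{b(V)}{\bar b(W)}$ bilinearly into the nine sub-commutators, evaluate them with \Fref{lem:7} and \Fref{lem:8}, reuse the mixed-type part of \Fref{prop:3} for the $\Delta$--$\Delta$ piece, and convert $dV'[F]$ and $dW''[F]$ via \Fref{prop:10} together with $\rho = \ccint \omega + 2i\d\bar\d F$. Your arguments for $\sigma_3$ (vanishing by rigidity and type) and $\sigma_2$ (the $-4ik\,\Theta(V',W'')$ from the $\Delta$--$\Delta$ commutator, with the cross terms cancelling) are correct, and your sketch of $\sigma_1$ is workable provided you also track the cancellation of the Ricci terms $\delta(G(V)\trdot r)\trdot \bar G(W)$ inherited from \Fref{prop:3} against those generated by the cross-commutators; that cancellation is the bulk of the actual computation and you do not mention it.

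Your accounting of the $\theta$ terms in $\sigma_0$, however, is wrong in a way that would derail the last step. The commutators of the $\Delta$-operators with the multiplication operators $-2\ccint V'[F]$ and $-2\ccint W''[F]$ do \emph{not} produce the surviving $\theta$ term: using the conjugate of \eqref{eq:41} and the reality of $F$ and $\Theta$ one finds $b(V)W''[F] = \bar b(W)V'[F]$, so the contributions $-2\ccint\, b(V)W''[F] + 2\ccint\, \bar b(W)V'[F]$ to $\sigma_0$ cancel outright. The $\theta$ term actually originates in $\sigma_0\com{\nabla^2_{G(V)}}{\nabla_{\bar G(W)\trdot dF}} = -ik\,\omega(G(V)\trdot \d\bar\d F \trdot \bar G(W))$ after substituting $2i\d\bar\d F = \rho - \ccint\omega$. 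More seriously, the residual $-4ik\ccint\,\theta(V',W'')$ in the alternative expression cannot ``arise as the leftover $\ccint V'[F]W''[F]$ cross-multiplication pieces'': the product $V'[F]W''[F]$ is not proportional to the metric trace $\theta(V',W'') = -\tfrac14 g(\Theta(V',W''))$, and in fact those products must be arranged to drop out of the identity rather than supply its $\theta$ term. The residual appears because the direct computation of $\sigma_0$ carries a $\ccint\theta$ coefficient twice that of $2\bar t\, b(V)W''[F] + 2t\, \bar b(W)V'[F]$, whose own $\theta$ contribution is generated inside $\Delta_{G(V)}W''[F]$ via $\rho = \ccint\omega + 2i\d\bar\d F$. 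As written, your final identification step would not close.
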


\begin{proof}
  The third-order symbol vanishes, as \Fref{prop:3} readily gives
  \begin{align*}
    \sigma_3\com{b(V)}{\bar b(W)} =
    \sigma_3\com{\Delta_{G(V)}}{\Delta_{\bar G(W)}} = 0.
  \end{align*}
  The calculation of the second-order symbol is also straightforward,
  \begin{align*}
    & \sigma_2 \com{b(V)}{\bar b(W)} \\ & \qquad = \sigma_2
    \com{\Delta_{G(V)}}{\Delta_{\bar G(W)}} + 2 \sigma_2
    \com{\nabla^2_{G(V)}}{\nabla_{\bar G(W)\trdot dF}} - 2 \sigma_2
    \com{\nabla^2_{\bar G(W)}}{\Delta_{G(V)\trdot dF}} \\ & \qquad =
    -4ik \Theta(V',W'') + 4\Sym(G(V) \trdot \d \bar \d F \trdot \bar
    G(W)) - 4\Sym(\bar G(W) \trdot \bar \d \d F \trdot G(V)) \\ &
    \qquad = -4ik \Theta(V',W'').
  \end{align*}
  For the first-order symbol, we first calculate
  $\sigma_1\com{\Delta_{G(V)}}{\nabla_{\bar G(W) \trdot dF}}$. We get
  \begin{align*}
    &2\sigma_1\com{\nabla^2_{G(V)}}{\nabla_{\bar G(W) \trdot dF}} \\
    & \qquad = 2\nabla^2_{G(V)} (\bar G(W)\trdot dF) - 4ikG(V) \trdot
    \omega \trdot \bar G(W) \trdot dF + 2G(V)^{xy}R^a_{uxy} \bar
    G(W)^{uv} dF_v
  \end{align*}
  and
  \begin{align*}
    &2\sigma_1\com{\nabla_{\delta G(V)}}{\nabla_{\bar G(W) \trdot dF}} \\
    & \qquad = 2\nabla_{\delta G(V)} (\bar G(W) \trdot dF) - 2 dF
    \trdot \bar G(W) \trdot \nabla \delta G(V) \\ & \qquad =
    2\nabla_{\delta G(V)} (\bar G(W) \trdot dF) + 2idF \trdot \bar
    G(W) \trdot \rho \trdot G(V) - 2dF_u \bar G(W)^{uv} R^a_{vxy}
    G(V)^{xy},
  \end{align*}
  and when combining these, we get
  \begin{align*}
    &2\sigma_1\com{\Delta_{G(V)}}{\nabla_{\bar G(W) \trdot dF}} \\ &
    \qquad = 2\Delta_{G(V)} (\bar G(W) \trdot dF) - 4ikG(V) \trdot
    \omega \trdot \bar G(W) \trdot dF - 2i G(V) \trdot \rho \trdot
    \bar G(W) \trdot dF.
  \end{align*}
  The first term of this expression can be rewritten as
  \begin{align*}
    2\Delta_{G(V)}(\bar G(W) \trdot dF) &= 2\delta (G(V) \trdot \nabla
    (\bar G(W) \trdot dF)) = 2\delta (G(V) \trdot \d \bar \d F \trdot
    \bar G(W)) \\ & = -i \delta (G(V) \trdot \rho \trdot \bar G(W)) +
    i\ccint \delta (G(V) \trdot \omega \trdot \bar G(W)).
  \end{align*}
  For the first-order symbol, we must also understand $\smash{\sigma_1
    \com{\nabla^2_{G(V)}}{W''[F]}}$, which yields
  \begin{align*}
    & 2\sigma_1 \com{\nabla^2_{G(V)}}{W''[F]} = 4 G(V) \trdot dW''[F]
    = - i G(V) \trdot \omega \trdot \delta \bar G(W) - 2i G(V) \trdot
    \omega \trdot \bar G(W) \trdot dF.
  \end{align*}
  Altogether, the previous identities yield
  \begin{align*}
    &2\sigma_1\com{\Delta_{G(V)}}{\nabla_{\bar G(W) \trdot dF}} -
    2\ccint\, \sigma_1\com{\nabla^2_{G(V)}}{W''[F]} \\ & \qquad =
    2\Delta_{G(V)} (\bar G(W) \trdot dF) - 4ikG(V) \trdot \omega
    \trdot \bar G(W) \trdot dF - 2i G(V) \trdot \rho \trdot \bar G(W)
    \trdot dF \\ & \qquad \qquad - i\ccint \delta (\bar G(W) \trdot
    \omega \trdot G(V) ) + 2i\ccint G(V) \trdot \omega \trdot \bar
    G(W) \trdot dF \\ & \qquad = -i \delta (G(V) \trdot \rho \trdot
    \bar G(W)) - 4ikG(V) \trdot \omega \trdot \bar G(W) \trdot dF + 4
    G(V) \trdot \d \bar \d F \trdot \bar G(W) \trdot dF \\ & \qquad
    \qquad + 2i\ccint \, \delta\Theta(V',W'') \\ & \qquad = \delta
    (G(V) \trdot r) \trdot \bar G(W) - 4ikG(V) \trdot \omega \trdot
    \bar G(W) \trdot dF + 4 dF \trdot \bar G(W) \trdot \nabla (G(V)
    \trdot dF) \\ & \qquad \qquad + 2i\ccint \, \delta \Theta(V',W'').
  \end{align*}
  A completely analogous computation shows
  \begin{align*}
    &2\sigma_1 \com{\Delta_{\bar G(W)}}{\nabla_{G(V) \trdot dF}} -
    2\ccint\, \sigma_1 \com{\nabla^2_{\bar G(W)}}{V'[F]} \\ & \qquad =
    \delta (\bar G(W) \trdot r) \trdot G(V) - 4ik \bar G(W) \trdot
    \omega \trdot G(V) \trdot dF + 4 dF \trdot G(V) \trdot \nabla
    (\bar G(W) \trdot dF) \\ & \qquad \qquad + 2i\ccint \, \delta
    \Theta(V',W'').
  \end{align*}
  Finally, combining all of the above with \Fref{prop:3}, we get
  \begin{align*}
    \sigma_1 \com{b(V)}{\bar b(W)} &= \sigma_1
    \com{\Delta_{G(V)}}{\Delta_{\bar G(W)}} + 4\com{G(V) \trdot
      dF}{\bar G(W) \trdot dF} \\ & \qquad +
    2\sigma_1\com{\Delta_{G(V)}}{\nabla_{\bar G(W) \trdot dF}} -
    2\ccint\, \sigma_1\com{\nabla^2_{G(V)}}{W''[F]} \\ & \qquad -
    2\sigma_1 \com{\Delta_{\bar G(W)}}{\nabla_{G(V) \trdot dF}} +
    2\ccint\,
    \sigma_1 \com{\nabla^2_{\bar G(W)}}{V'[F]}\\
    &= \sigma_1 \com{\Delta_{G(V)}}{\Delta_{\bar G(W)}} - 8ik \,
    \Theta(V',W'') \trdot dF \\ & \qquad + \delta (G(V) \trdot r)
    \trdot \bar G(W) - \delta (\bar G(W) \trdot r) \trdot G(V) \\ &= -
    4ik \, \delta \Theta(V',W'') - 8ik \, \Theta(V',W'') \trdot dF
  \end{align*}
  It only remains to calculate the zero-order symbol. First we compute
  \begin{align*}
    2 \sigma_0 \com{\nabla^2_{G(V)}}{\nabla_{\bar G(W) \trdot dF}} &=
    -2ik\,\omega (G(V) \trdot \nabla (dF \trdot \bar G(W))) \\ &=
    -2ik\, \omega (G(V) \trdot \d \bar \d F \trdot \bar G(W)) \\ &=
    -2ik\, \d \bar \d F (G(V) \trdot \omega \trdot \bar G(W)) \\ &= -
    k\:\!\rho (G(V) \trdot \omega \trdot \bar G(W))
    + k\ccint\, \omega (G(V) \trdot \omega \trdot \bar G(W)) \\
    &= - ik\, r( \Theta(V',W'')) - 4ik\ccint \, \theta(V',W''),
  \end{align*}
  and similarly, we have
  \begin{align*}
    2\sigma_0 \com{\nabla^2_{\bar G(W)}}{\nabla_{G(V) \trdot dF}} &=
    ik\, r( \Theta(V',W'') ) + 4ik\ccint \, \theta(V',W'').
  \end{align*}
  Furthermore, we calculate
  \begin{align*}
    &2\sigma_0\com{\nabla_{\delta G(V)}}{\nabla_{\bar G(W) \trdot dF}}
    - 2\sigma_0 \com{\nabla_{\delta \bar G(W)}}{\nabla_{G(V) \trdot
        dF}} \\ & \qquad = -2ik \delta G(V) \trdot \omega \trdot \bar
    G(W) \trdot dF + 2ik \delta \bar G(W) \trdot \omega \trdot G(V)
    \trdot dF \\ & \qquad = -4ik \, \delta \Theta (V',W'') \trdot dF,
  \end{align*}
  and finally
  \begin{align*}
    4\sigma_0 \com{\nabla_{G(V) \trdot dF}}{\nabla_{\bar G(W) \trdot
        dF}} = - 4ik \, dF \trdot \Theta(V',W'') \trdot dF.
  \end{align*}
  All of the above contribute to the zero-order symbol, but we also
  need to calculate $b(V)W''[F]$ and $\bar b(W)V'[F]$. First we
  compute
  \begin{align*}
    &4 \Delta_{G(V)} W''[F] \\ & \qquad = 4 \delta (G(V) \trdot dW''[F]) \\
    &\qquad = - i \delta (G(V) \trdot \omega \trdot \delta \bar G(W))
    - 2i \delta (G(V) \trdot \omega \trdot \bar G(W) \trdot dF) \\
    &\qquad = i \delta \delta (\bar G(W) \trdot \omega \trdot G(V)) -
    2i \d \bar \d F (G(V) \trdot \omega \trdot \bar G(W)) - 2i \delta
    G(V) \trdot \omega \trdot \bar G(W) \trdot dF \\ & \qquad = - i
    \delta \delta(\Theta(V',W'')) - i\:\! r( \Theta(V',W'')) -
    4i\ccint \, \theta(V',W'') - 2i\:\! \delta G(V) \trdot \omega
    \trdot \bar G(W) \trdot dF,
  \end{align*}
  where the last equality used the fact that the double divergence of
  a bivector field only depends on its symmetric part. Similarly, we
  find
  \begin{align*}
    4 \Delta_{\bar G(W)} V'[F] = i \delta \delta\Theta(W'',V') +
    i\:\!r( \Theta(W'',V')) + 4i\ccint \, \theta(W'',V') + 2i \delta
    \bar G(W) \trdot \omega \trdot G(V) \trdot dF.
  \end{align*}
  We also have
  \begin{align*}
    4\nabla_{G(V) \trdot dF} W''[F] &= - i dF \trdot G(V) \trdot
    \omega \trdot \delta \bar G(W) - 2i dF \trdot
    \Theta(V',W'') \trdot dF\\
    4\nabla_{\bar G(W) \trdot dF} V'[F] &= + i dF \trdot \bar G(W)
    \trdot \omega \trdot \delta G(V) + 2i dF \trdot \Theta(W'',V')
    \trdot dF,
  \end{align*}
  so finally, we conclude that
  \begin{align*}
    4b(V)W''[F] &= - i \delta \delta(\Theta(V',W'')) - i\, r(
    \Theta(V',W'')) - 4i\ccint \, \theta(V',W'') \\ & \qquad - 4i
    \delta \Theta(V',W'') \trdot dF - 2i dF \trdot \Theta(V',W'')
    \trdot dF - 8\ccint V'[F]W''[F].
  \end{align*}
  Using this identity, conjugation yields
  \begin{align*}
    \bar b(W) V'[F] = \overline{b(W)V''[F]} = b(V)W''[F],
  \end{align*}
  so the contributions of $\bar b(W) V'[F]$ and $b(V)W''[F]$ to the
  zero-order symbol of the commutator cancel. This finally gives
  \begin{align*}
    \sigma_0 \com{b(V)}{\bar b(W)} &= \sigma_0
    \com{\Delta_{G(V)}}{\Delta_{\bar G(W)}} + 2\sigma_0
    \com{\Delta_{G(V)}}{\nabla_{\bar G(W) \trdot dF}} - 2\sigma_0
    \com{\Delta_{\bar G(W)}}{\nabla_{G(V) \trdot dF}} \\ & \qquad +
    4\sigma_0 \com{\nabla_{G(V) \trdot dF}}{\nabla_{\bar G(W) \trdot
        dF}} - 2\ccint \,b(V)W''[F] + 2\ccint \, \bar b(W) V'[F] \\ &
    = - ik\, \delta \delta \Theta (V',W'') - ik\, r(\Theta(V',W'')) -
    8ik\ccint \, \theta(V',W'') \\ & \qquad - 4ik \, dF \trdot
    \Theta(V',W'') \trdot dF - 4ki \, \delta \Theta (V',W'') \trdot
    dF,
  \end{align*}
  as claimed in the proposition. To prove the final statement of the
  proposition, we note from the above that
  \begin{align*}
    &2 \bar t \, b(V)W''[F] + 2t\, \bar b(W) V'[F] \\ & \qquad = - i k
    \delta \delta(\Theta(V',W'')) - ik\, r ( \Theta(V',W'') ) -
    4ik\ccint \, \theta(V',W'') \\ & \qquad \qquad - 4i k\, dF \trdot
    \Theta(V',W'') \trdot dF - 4i k \, \delta \Theta (V',W'') \trdot
    dF - 4\ccint kV'[F]W''[F],
  \end{align*}
  which gives
  \begin{align*}
    \sigma_0 \com{b(V)}{\bar b(W)} &= 2 \bar t \, b(V)W''[F] + 2t\,
    \bar b(W) V'[F] - 4ik\ccint \, \theta(V',W'') + 4\ccint
    kV'[F]W''[F] \\ & = 2\bar t \sigma_0 \com{b(V)}{W''[F]} + 2t\,
    \sigma_0 \com{\bar b(W)}{V'[F]} - 4ik\ccint \, \theta(V',W'').
  \end{align*}
  This proves the proposition.
\end{proof}

With the bulk of computations encoded in \Fref{prop:12}, we can
calculate the curvature of the Hitchin-Witten connection. This is the
content of the following theorem.

\begin{theorem}
  \label{thm:1}
  The curvature of the Hitchin-Witten connection $\tilde \boldnabla$
  acts as a first-order operator with symbols
  \begin{alignat*}{3}
    \sigma_1 \hcct^{2,0} &= \frac{i}{t^2}
    X'_{\d_{\scale{0.7}{\scriptscriptstyle \mathcal{T}}} \mc } &\qquad
    \quad \sigma_1 \hcct^{1,1} &=0 &\qquad \quad \sigma_1 \hcct^{0,2}
    & = - \frac{i}{\bar t^2} X''_{\bar
      \d_{\scale{0.7}{\scriptscriptstyle \mathcal{T}}} \bar \mc }
    \\[1ex]
    \sigma_0 \hcct^{2,0} &= \frac{t - \ccint}{2t^2}
    \d_{\scriptscriptstyle \mathcal{T}} \mc & \sigma_0 \hcct^{1,1} &=
    \frac{ik\ccint}{t \bar t} \big (\theta - 2i \d_{\scriptscriptstyle
      \mathcal{T}} \bar \d_{\scriptscriptstyle \mathcal{T}} F \big ) &
    \sigma_0 \hcct^{0,2} & = - \frac{\bar t + \ccint}{2\bar t^2} \bar
    \d_{\scriptscriptstyle \mathcal{T}} \bar \mc
  \end{alignat*}
  on sections of the bundle $\pqb$ over $\mathcal{T}$.
\end{theorem}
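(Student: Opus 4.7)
The curvature can be written as $\tilde F = d_{\scriptscriptstyle \mathcal{T}}\tilde A + \tilde A \wedge \tilde A$ for the operator-valued one-form $\tilde A(V) = \frac{1}{2t}b(V) - \frac{1}{2\bar t}\bar b(V) + V[F]$. Since the family is holomorphic, $b$ and $\d_{\scriptscriptstyle \mathcal{T}} F$ are of type $(1,0)$ on $\mathcal{T}$ while $\bar b$ and $\bar\d_{\scriptscriptstyle \mathcal{T}} F$ are of type $(0,1)$, so the plan is to compute each type component of $\tilde F$ separately by evaluating on commuting vector fields of pure type.

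For $\tilde F^{2,0}$, evaluated on commuting $(1,0)$-fields $V, W$, the $\bar b$-contribution drops out and the calculation mirrors \Fref{thm:4} verbatim with $\frac{1}{2t}$ in place of $\frac{1}{4k+2\ccint}$: \Fref{prop:22} supplies $[b(V),b(W)]$, the $\Theta$ and $\d\bar\d F$ terms of \Fref{prop:11} vanish on two $(1,0)$-arguments, and the identity of \Fref{prop:24}, $d_{\scriptscriptstyle \mathcal{T}}\mc(V,W)=b(V)W'[F]-b(W)V'[F]$, collapses the residual cross-commutators into $\frac{1}{2t}d_{\scriptscriptstyle \mathcal{T}}\mc$. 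The $(0,2)$ component follows by complex conjugation of the entire argument, interchanging $t\leftrightarrow\bar t$, $b\leftrightarrow\bar b$, and $\mc\leftrightarrow\bar\mc$.

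The nontrivial part is $\tilde F^{1,1}$. For commuting $V'$ of type $(1,0)$ and $W''$ of type $(0,1)$, all cross contributions survive:
\begin{equation*}
  \tilde F^{1,1}(V', W'') = \frac{d_{\scriptscriptstyle \mathcal{T}} b(V', W'')}{2t} - \frac{d_{\scriptscriptstyle \mathcal{T}} \bar b(V', W'')}{2\bar t} - \frac{[b(V'), \bar b(W'')]}{4t\bar t} + \frac{[b(V'), W''[F]]}{2t} + \frac{[\bar b(W''), V'[F]]}{2\bar t}.
\end{equation*}
\Fref{prop:12} supplies the full symbol data for $[b(V'),\bar b(W'')]$; \Fref{prop:11} and its conjugate give $d_{\scriptscriptstyle \mathcal{T}} b$ and $d_{\scriptscriptstyle \mathcal{T}} \bar b$ on mixed-type arguments; and the prequantum commutators expand directly as $[b(V),f] = b(V)(f) + 2\ccint V'[F]\,f + 2\nabla_{G(V)\trdot df}$, with an analogous identity for $\bar b$. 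The main obstacle is bookkeeping the cancellation of $\sigma_2$ and $\sigma_1$ across these five sources. The $\sigma_2$ cancellation uses $t+\bar t = 2k$ to balance $-\frac{i\Theta}{2t}$, $\frac{i\Theta}{2\bar t}$, and $\frac{ik\Theta}{t\bar t}$. For $\sigma_1$, the first-order parts $2G(V)\trdot dW''[F]$ and $2\bar G(W)\trdot dV'[F]$ in $\sigma_1 d_{\scriptscriptstyle \mathcal{T}} b$ and $\sigma_1 d_{\scriptscriptstyle \mathcal{T}} \bar b$ are killed by the $\sigma_1$ of the respective prequantum commutators, while the remaining $\delta\Theta$ and $\Theta\trdot dF$ contributions from \Fref{prop:12} balance those from $d_{\scriptscriptstyle \mathcal{T}} b$ and $d_{\scriptscriptstyle \mathcal{T}} \bar b$ by the same $t+\bar t=2k$ identity.

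Once $\sigma_2$ and $\sigma_1$ vanish, the zero-order symbol is produced by the final identity of \Fref{prop:12}, in the equivalent form $\sigma_0[b(V),\bar b(W)] = 2\bar t\,\sigma_0[b(V),W''[F]] + 2t\,\sigma_0[\bar b(W),V'[F]] - 4ik\ccint\,\theta$. Dividing by $-4t\bar t$ and combining with the $\sigma_0$ of the prequantum commutators makes the $\sigma_0[b,W''[F]]$ and $\sigma_0[\bar b,V'[F]]$ contributions cancel exactly, leaving $\frac{ik\ccint}{t\bar t}\theta$. Adding the zero-order contributions from $\frac{1}{2t}d_{\scriptscriptstyle \mathcal{T}} b$ and $-\frac{1}{2\bar t}d_{\scriptscriptstyle \mathcal{T}} \bar b$—each a multiple of $\ccint\,\d_{\scriptscriptstyle \mathcal{T}}\bar\d_{\scriptscriptstyle \mathcal{T}} F$ summing to $\frac{2k\ccint}{t\bar t}\d_{\scriptscriptstyle \mathcal{T}}\bar\d_{\scriptscriptstyle \mathcal{T}} F$—produces $\frac{ik\ccint}{t\bar t}(\theta - 2i\d_{\scriptscriptstyle \mathcal{T}}\bar\d_{\scriptscriptstyle \mathcal{T}} F)$ as claimed, a combination which by \Fref{prop:1} is a constant function on $M$.
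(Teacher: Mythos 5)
Your proposal follows essentially the same route as the paper: the $(2,0)$- and $(0,2)$-parts by repeating the Theorem~\ref{thm:4} argument and its conjugate, and the $(1,1)$-part by combining \Fref{prop:12}, \Fref{prop:11} and its conjugate, and the prequantum commutators, with the cancellations of $\sigma_2$ and $\sigma_1$ driven by $t+\bar t=2k$ and the zeroth-order symbol extracted from the final identity of \Fref{prop:12}. The only blemish is a sign slip in your list of $\sigma_2$ contributions (the term from $-\tfrac{1}{2\bar t}d_{\scriptscriptstyle\mathcal{T}}\bar b$ is $-\tfrac{i}{2\bar t}\Theta$, not $+\tfrac{i}{2\bar t}\Theta$), which does not affect the argument.
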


\begin{proof}
  The calculation of the (2,0)-part proceeds as in the proof of
  \Fref{thm:4} and yields a first-order operator with symbols given by
  \begin{align*}
    \sigma_1 \big (\hcct^{2,0}(V,W) \big ) = \frac{i}{t^2}
    X'_{\d_{\scale{0.7}{\scriptscriptstyle \mathcal{T}}} \mc (V,W) }
    \qquad \text{and} \qquad \sigma_0 \big (\hcct^{2,0}(V,W) \big ) =
    \frac{t - \ccint}{2t^2} \d_{\scriptscriptstyle \mathcal{T}} \mc
    (V,W),
  \end{align*}
  as claimed in the theorem.

  The calculation of the (0,2)-part is completely analogous. We have
  the conjugate statements to \Fref{prop:22},
  \begin{align*}
    \sigma_1\com{\bar b(V)}{\bar b(W)} &= - 4i X''_{\bar
      \d_{\scale{0.7}{\scriptscriptstyle \mathcal{T}}} \bar \mc (V,W)}
    \qquad \text{and} \qquad \sigma_0 \com{\bar b(V)}{\bar b(W)} =
    -2\ccint \bar \d_{\scriptscriptstyle \mathcal{T}} \bar c(V,W),
  \end{align*}
  and \Fref{prop:11},
  \begin{align*}
    d_{\scriptscriptstyle \mathcal{T}} \bar b(V, W) &= i
    \Delta_{\Theta(V,W)} + 2i\nabla_{\Theta(V,W) \trdot dF} - 2
    \nabla_{\bar G(V)\trdot dW[F] - \bar G(W) \trdot dV[F]} - 2 \ccint
    \d_{\scriptscriptstyle \mathcal{T}} \bar \d_{\scriptscriptstyle
      \mathcal{T}} F(V,W),
  \end{align*}
  and these can be combined with the conjugate of \Fref{prop:24} to
  compute
  \begin{align*}
    \sigma_1 \big (\hcct^{0,2}(V,W) \big ) = - \frac{i}{\bar t^2}
    X''_{\bar \d_{\scale{0.7}{\scriptscriptstyle \mathcal{T}}} \bar
      \mc (V,W) } \qquad \text{and} \qquad \sigma_0 \big
    (\hcct^{2,0}(V,W) \big ) = - \frac{\bar t + \ccint}{2\bar t^2}
    \d_{\scriptscriptstyle \mathcal{T}} \mc (V,W).
  \end{align*}
  It only remains to compute the (1,1)-part, and we get
  \begin{align*}
    4t \bar t\, \hcct^{1,1}(V',W'') &= - \com{b(V)}{\bar b(W)} + 2\bar
    t\, d_{\scriptscriptstyle \mathcal{T}} b(V',W'') - 2t\,
    d_{\scriptscriptstyle \mathcal{T}}\bar b(V',W'') \\ & \qquad +
    2\bar t \com{b(V)}{W''[F]} + 2t \com{\bar b(W)}{V'[F]}.
  \end{align*}
  By \Fref{prop:12} and the above expression for
  $d_{\scriptscriptstyle \mathcal{T}} \bar b$, the second-order symbol
  vanishes,
  \begin{align*}
    4 t \bar t \,\sigma_2\,\! \big (\hcct^{1,1}(V',W'') \big ) &= -
    \sigma_2 \com{b(V)}{\bar b(W)} + 2 \bar t \,\sigma_2 \big(
    d_{\scriptscriptstyle \mathcal{T}} b(V',W'') \big ) - 2 t \,
    \sigma_2 \big (d_{\scriptscriptstyle \mathcal{T}} \bar b(V',W'')
    \big ) \\ &= 4ik \, \Theta(V',W'') - 2i (t + \bar t) \;\!
    \Theta(V',W'') \\ &= 0.
  \end{align*}
  The same is true for the first-order symbol
  \begin{align*}
    4 t \bar t \,\sigma_1\,\! \big (\hcct^{1,1}(V',W'') \big ) &= -
    \sigma_1 \com{b(V)}{\bar b(W)} + 2i \bar t \;\! \sigma_1 \big(
    d_{\scriptscriptstyle \mathcal{T}} b(V',W'') \big ) - 2i t \;\!
    \sigma_1 \big (d_{\scriptscriptstyle \mathcal{T}} \bar b(V',W'')
    \big ) \\ & \qquad + 2\bar t \, \sigma_1 \com{b(V)}{W''[F]} + 2t\,
    \sigma_1 \com{\bar b(W)}{V'[F]} \\ & = 4ik\;\! \delta
    \Theta(V',W'') + 8ik\;\! \Theta(V',W'') \trdot dF \\ & \qquad -
    2i(t + \bar t) \delta \Theta(V',W'') - 4i(t + \bar t)
    \Theta(V',W'') \trdot dF \\ & \qquad - 4\bar t G(V) \trdot W''[F]
    - 4t \bar G(W) \trdot dV'[F] \\ & \qquad + 4\bar t G(V) \trdot
    W''[F] + 4t \bar G(W) \trdot dV'[F] \\ &= 0.
  \end{align*}
  Finally, the last statement of \Fref{prop:12} gives the zeroth-order
  part
  \begin{align*}
    4 t \bar t \,\sigma_0 \big ( \hcct^{1,1}(V',W'') \big) &=
    4ik\ccint \, \theta(V',W'') + 4\ccint (\bar t + t)
    \d_{\scriptscriptstyle \mathcal{T}} \bar \d_{\scriptscriptstyle
      \mathcal{T}} F(V',W'') \\ & = 4ik\ccint \big ( \theta(V',W'') -
    2i \d_{\scriptscriptstyle \mathcal{T}} \bar \d_{\scriptscriptstyle
      \mathcal{T}} F(V',W'') \big ).
  \end{align*}
  This completes the proof of the theorem.
\end{proof}

\subsection*{Projective Flatness}

In case the Hitchin connection $\boldnabla$ in the bundle $\qb$ over
$\mathcal{T}$ is projectively flat, the parallel translation maps
along homotopic curves are equal up to scale. Thus, if the parameter
space $\mathcal{T}$ is simply connected, the Hitchin connection gives
a canonical identification of the projectivized quantum spaces
associated with different complex structures. In this sense, the
quantization is independent of the complex structure.

The following theorem is an immediate consequence of the explicit
curvature calculations of \Fref{thm:4} and \Fref{thm:1}.

\begin{theorem}
  \label{thm:2}
  The connections $\boldnabla$ and $\tilde \boldnabla$, acting on
  $\qb$ and $\pqb$, respectively, are both projectively flat if and
  only if the holomorphic vector field on $M$ given by
  \begin{align*}
    X'_{\d_{\scale{0.7}{\scriptscriptstyle \mathcal{T}}} \mc (V,W)} =
    \frac{i}{4} \Delta_{G(W)} \delta G(V) - \frac{i}{4} \Delta_{G(V)}
    \delta G(W) + \frac{i}{2} G(W) \trdot d \mc(V) - \frac{i}{2} G(V)
    \trdot d \mc(W)
  \end{align*}
  vanishes, for all vector fields $V$ on $\mathcal{T}$.
\end{theorem}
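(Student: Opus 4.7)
The plan is to read the theorem directly from the explicit curvature formulas in \Fref{thm:4} and \Fref{thm:1}, using \Fref{prop:1} to identify the automatically scalar pieces and \Fref{prop:8} to rewrite the relevant Hamiltonian vector field in the form asserted by the theorem.

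First I would dispose of the Hitchin connection $\boldnabla$ on $\qb$. By \Fref{thm:4}, the $(0,2)$-part is already zero, and by \Fref{prop:1} the $(1,1)$-part takes values in constant functions on $M$, so it is automatically scalar. All content thus lies in $\hcc^{2,0}$, which is a nonzero multiple of $P_k(\d_{\scriptscriptstyle \mathcal{T}} \mc)$. The key observation is that on a holomorphic section $s$ the prequantum operator collapses to $P_k(f) s = \frac{i}{k}\nabla_{X'_f} s + f s$, because $\nabla_Z s = 0$ for $Z \in T''\!M$. This acts as scalar multiplication on $\qb$ if and only if the first-order piece vanishes, i.e.\ $X'_f = 0$; and when it does, $\bar \d f = 0$, so $f$ is holomorphic on $M$ and therefore constant by the standing hypothesis of no non-constant holomorphic functions. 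Applying this to $f = \d_{\scriptscriptstyle \mathcal{T}} \mc (V,W)$, and substituting the explicit expression for $X'_{\d_{\scriptscriptstyle \mathcal{T}} \mc (V,W)}$ supplied by \Fref{prop:8}, yields exactly the vector field in the theorem.

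For the Hitchin-Witten connection $\tilde \boldnabla$ on $\pqb$, I would proceed by the same template using \Fref{thm:1}. The $(1,1)$-part is never an obstruction: its first-order symbol is zero, and its zero-order symbol is proportional to $\theta - 2i\d_{\scriptscriptstyle \mathcal{T}} \bar \d_{\scriptscriptstyle \mathcal{T}} F$, which is scalar by \Fref{prop:1}. For the $(2,0)$- and $(0,2)$-parts, projective flatness requires every symbol to act as scalar multiplication on fibers of $\pqb$. The first-order symbols are $\frac{i}{t^2} X'_{\d_{\scriptscriptstyle \mathcal{T}} \mc}$ and $-\frac{i}{\bar t^2} X''_{\bar \d_{\scriptscriptstyle \mathcal{T}} \bar \mc}$, which are complex conjugates of one another and thus vanish simultaneously---giving precisely the condition of the theorem. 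Once they vanish, $\d_{\scriptscriptstyle \mathcal{T}} \mc$ and $\bar \d_{\scriptscriptstyle \mathcal{T}} \bar \mc$ become holomorphic, respectively antiholomorphic, functions on $M$, hence constant, so the zero-order symbols automatically reduce to scalars.

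I do not expect a genuine obstacle, since the propositions of the preceding sections have already assembled every ingredient. The one delicate point is keeping track of where each curvature is evaluated: for $\boldnabla$ on $\qb$ the $(0,1)$-half of the covariant derivative is killed on holomorphic sections, turning a first-order prequantum operator into the truly relevant scalarity obstruction, whereas for $\tilde \boldnabla$ on $\pqb$ both halves survive and appear as complex conjugate obstructions that collapse to the same vector field condition.
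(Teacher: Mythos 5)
Your proposal is correct and follows essentially the same route as the paper: read off the obstruction from the explicit curvature formulas of Theorems \ref{thm:4} and \ref{thm:1}, use \Fref{prop:1} to dispose of the $(1,1)$-parts, and observe that once the first-order symbol $X'_{\d_{\scriptscriptstyle \mathcal{T}}\mc}$ (equivalently its conjugate) vanishes, the remaining zero-order pieces are holomorphic, hence constant by the standing hypothesis. The only cosmetic difference is that you deduce constancy of $\d_{\scriptscriptstyle \mathcal{T}}\mc$ directly from $\bar\d\,\d_{\scriptscriptstyle \mathcal{T}}\mc(V,W)=0$, whereas the paper routes it through the fact that the curvature of $\boldnabla$ preserves $\qb$; both are valid.
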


\begin{proof}
  Projective flatness amounts to the curvature being a two-form on
  $\mathcal{T}$ with values in constant functions on $M$. In
  particular, it takes values in zeroth-order differential operators.
  Being proportional to the first-order symbol of the (2,0)-part of
  the curvature of both $\boldnabla$ and $\tilde \boldnabla$, the
  vector field $X'_{\d_{\scale{0.7}{\scriptscriptstyle \mathcal{T}}}
    \mc (V,W)}$ will of course vanish if either of these are
  projectively flat.

  Conversely, suppose that $\smash{X'_{\d_{\scriptscriptstyle
        \mathcal{T}} \mc (V,W)}}$ vanishes, which means that the
  curvature of $\boldnabla$ is a zeroth-order operator, and since it
  preserves the subbundle $\qb$ of holomorphic sections of
  $\mathcal{L}^k$ it takes values in holomorphic functions on $M$. By
  assumption, such functions are constants, which proves that
  $\boldnabla$ is projectively flat. In particular, the form
  $\d_{\scriptscriptstyle \mathcal{T}} \mc$ takes values in constant
  functions on $M$, which is also the case for $\theta - 2i
  \d_{\scriptscriptstyle \mathcal{T}} \bar \d_{\scriptscriptstyle
    \mathcal{T}} F$, as already noted in \Fref{prop:1}. By
  \Fref{thm:1}, this proves that the Hitchin-Witten connection $\tilde
  \boldnabla$ is projectively flat.
\end{proof}

We observe that our main \Fref{thm:MT} follows as an immediate
corollary of the theorem above. This ends the general discussion of
the Hitchin and the Hitchin-Witten connection. In the final section,
we will apply the results to the setting originally motivating the
study of these connections.

\chapter{Quantum Chern-Simons Theory}

\label{QCST}

Is this section, we apply the results of previous sections to
Chern-Simons theory with either compact or complex gauge group. To be
specific, let $G$ denote the real Lie group $\SU(n)$, sitting as the
maximal compact subgroup of its complexification $G_\sssetC$, which
can be identified with $\SL(n,\setC)$. Indeed, if $\g$ denotes the Lie
algebra of $G$, consisting of skew-Hermitian traceless matrices, then
its complexification $\gc$ can be identified with the Lie algebra
$\mathfrak{sl}_n(\setC)$ of traceless matrices through the unique
splitting of any complex matrix into Hermitian and skew-Hermitian
parts. Furthermore, let $\langle \cdot, \cdot \rangle$ be an invariant
inner product on $\gc$, such as $-\frac{1}{8\pi^2}\!\Tr$, which is
normalized so that $\frac{1}{6}\langle \vartheta \wedge [ \vartheta
\wedge \vartheta] \rangle$ represents an integral generator of $H^3(G,
\setR)$, where $\vartheta$ is the Maurer-Cartan form on $G$.

Let $\Sigma$ be a closed surface of genus $g \geq 2$, and let
$\Sigma_p$ be the surface obtained by puncturing $\bar \Sigma$ at a
point $p \in \Sigma$. Fix an element $d \in \setZ/n\setZ$ and a small
loop $\gamma$ around the puncture, and consider the moduli spaces
\begin{align}
  \label{eq:26}
  \begin{aligned}
    M &=\Hom_d(\pi_1(\Sigma_p), G) / G \\
    M_{\sssetC} &= \Hom^+_d(\pi_1(\Sigma_p), G_{\sssetC} ) /
    G_{\sssetC},
  \end{aligned}
\end{align}
of representations of the fundamental group mapping $\gamma$ to the
element $e^{2\pi i d/n} I$ in the common center of $G$ and
$G_\sssetC$. For $M_\sssetC$, we restrict to reductive
representations, which is indicated by a plus in the notation. The
subspaces $M^s$ and $M^s_\sssetC$ corresponding to irreducible
connections are of particular relevance to us. For these spaces, we
have en embedding $M^s \subset M^s_\sssetC$, since any irreducible
unitary representation can only be conjugate to another such by a
unitary transformation.

The spaces $M$ and $M_\sssetC$ also have a gauge theoretic realization
as moduli spaces of flat connections. Let $P$ denote the trivial
principal bundle over $\Sigma_p$ with structure group $G$. Fix an
element $a_d \in \g$ such that $\exp(2\pi a_d) = e^{2\pi i d / n} I$,
and denote by $\mathcal{F}$ the space of flat connections on $P$ which
are all equal to $a_d d\theta$ for some fixed polar coordinate system,
with angular coordinate $\theta$, in a disc around the puncture. These
connections are said to be in temporal gauge near the puncture and
clearly have holonomy $e^{2\pi i t/n}\Id$ around $\gamma$. If
$\mathcal{G}$ denotes the space of gauge transformations equal to the
identity in a neighbourhood of the puncture, then the holonomy
representations provide an identification
\begin{align*}
  M \cong \mathcal{F}/\mathcal{G}.
\end{align*}
Similarly, let $\mathcal{F}^+_\sssetC$ be the space of flat reductive
connections on the trivial $G_{\sssetC}$-bundle $P_\sssetC$ over
$\Sigma_p$, and let $\mathcal{G}_\sssetC$ be the space of gauge
transformations, both spaces with restrictions similar to the above
around the puncture. The holonomy representation then gives an
identification
\begin{align*}
  M_{\sssetC} \cong \mathcal{F}^+_{\sssetC} / \mathcal{G}_\sssetC.
\end{align*}
By a more careful selection of connections and gauge transformations,
using exponential decay in weighted Sobolev norms around the puncture
\cite{MR1605216}, the subsets $M^s$ and $M^s_\sssetC$ corresponding to
irreducible connections can be endowed with the structure of smooth
manifolds. The tangent space $T_{[A]}M^s_{\sssetC}$ at a connection
$A$, is given by the compactly supported first cohomology
$H^1_A(\Sigma_p, \gc)$, with values in the adjoint bundle and exterior
derivative $d_A$ induced by $A$. We can then define a complex
symplectic form $\omega_\sssetC$ by
\begin{align}
  \label{eq:25}
  \omega_\sssetC ([\alpha], [\beta]) = - 4\pi \int_{\Sigma} \langle
  \alpha \wedge \beta \rangle,
\end{align}
In complete analogy, the tangent space $T_{[A]}M^s$ is given by the
cohomology $H^1_A(\Sigma_p, \g)$. The formula \eqref{eq:25} defines in
this case a real symplectic form on $M^s \subset M^s_\sssetC$, which
is of course just the restriction of $\omega_\sssetC$.

If $t \in \setC$ is a complex number with integer real part, $k = \re
t \in \setZ$, we wish to quantize the space $M^s_\sssetC$ with respect
to the real symplectic form
\begin{align*}
  \omega_t = \frac{1}{2} ({t\omega_\sssetC + \overline{t
      \omega_\sssetC}}),
\end{align*}
as well as the space $M^s$ equipped with the restriction of
$\omega_t$, which is given by $k\omega$.

\pagebreak[1] Suppose that $A \in \mathcal{F}^+_\sssetC$ is a
connection, and that $g \colon \Sigma_p \to G_\sssetC$ is a gauge
transformation in $\mathcal{G}_\sssetC$. Since $G_\sssetC$ is simply
connected and hence 2-connected, we can choose a homotopy $\tilde g
\colon \Sigma_p \times [0,1] \to G_\sssetC$ from the trivial gauge
transformation to $g$, keeping identity values fixed. If $\pi \colon
\Sigma_p \times [0,1] \to \Sigma_p$ denotes projection onto the first
factor, we can pull back $A$ to a connection $\tilde A = \pi^* A$ on
$\Sigma_p \times [0,1]$ and consider the Chern-Simons form
\begin{align*}
  \alpha_\sssetC(\tilde A) = \langle \tilde A \wedge F_{\tilde A}
  \rangle - \frac{1}{6} \langle \tilde A \wedge [\tilde A \wedge
  \tilde A] \rangle,
\end{align*}
which will of course be complex-valued in general. Using the complex
number $t \in \setC$ from before, we shall consider the real form
\begin{align*}
  \alpha_t(\tilde A) = \frac{1}{2}(t \alpha_\sssetC (\tilde A) +
  \overline{t\alpha_\sssetC(\tilde A)}),
\end{align*}
and the Chern-Simons cocycle given by
\begin{align*}
  \Theta_t(A, g) = \exp \Big ( 2\pi i
  \int_{\smash{\raisebox{-3pt}{$\mathrlap{\scriptstyle \Sigma \times
          [0,1]}$}}} \;\; \alpha_t (\tilde A^{\tilde g}) \Big ) \:\:
  \in \: \U(1),
\end{align*}
where $\tilde A^{\tilde g}$ denotes the gauge-transformed connection.
Since $A$ is flat and in temporal gauge, the Chern-Simons form
vanishes in a neighbourhood of $\{p\} \times [0,1]$ and the integral
converges. Furthermore, since the real part of $t$ is an integer, the
expression is independent of the choice of homotopy $\tilde g$ and
defines a map $\Theta \colon \mathcal{F_\sssetC} \times
\mathcal{G}_\sssetC \to U(1)$, which can be shown to satisfy the
cocycle relation
\begin{align*}
  \Theta_t(A^g, h) \Theta_t(A, g) = \Theta_t (A, gh).
\end{align*}
This cocycle can be used to lift the action by the gauge group
$\mathcal{G}_\sssetC$ on $\mathcal{F}^+_\sssetC$ to the trivial line
bundle $\mathcal{F}^+_\sssetC \times \setC$ by
\begin{align*}
  (A, z) \cdot g = (A^g, \Theta_t(A, g) z),
\end{align*}
and the quotient defines a Hermitian line bundle,
\begin{align*}
  \mathcal{L}^t_\sssetC \to M^s_\sssetC,
\end{align*}
over the smooth part of the moduli space. Furthermore, this bundle
comes with a unitary connection $\nabla$, given on the trivial bundle
$\mathcal{F}^+_\sssetC \times \setC$ by the one-form
\begin{align*}
  B_t(\alpha) = 2\pi i\int_\Sigma \langle tA \wedge \alpha +
  \overline{ tA \wedge \alpha}\rangle,
\end{align*}
for a tangent vector $\alpha \in T_A\mathcal{F} \cong \Omega^1(\Sigma,
\gc)$ at $A \in \mathcal{F}^+_\sssetC$. It is easily verified that the
curvature of this connection is given by
\begin{align*}
  F_\nabla = -i\omega_t,
\end{align*}
so that $\mathcal{L}^t_\sssetC$ defines a prequantum line bundle over
$M^s_\sssetC$, with symplectic form $\omega_t$, and clearly this
restricts to a prequantum line bundle $\mathcal{L}^k$ over $M_s$,
equipped with $k\omega$ as its symplectic structure.

To perform geometric quantization, we must introduce a
polarization. This will come from a choice of Riemann surface
structure on $\Sigma$, which amounts to a Hodge star-operator $*
\colon \Omega^1(\Sigma) \to \Omega^1(\Sigma)$, satisfying $*^2 =
{-\Id}$ for dimensional reasons. Using Hodge theory to identify
$H^1_A(\Sigma, \gc)$ with the space of harmonic forms, we define an
almost complex structure $J$ on $M^s_\sssetC$ by the expression
\begin{align}
  \label{eq:4}
  J \alpha = {-{*\overline{\alpha}}}
\end{align}
on harmonic representatives, the space of which is preserved by
$*$. It is easily checked that $J$ is compatible with
$\omega_t$. There is another obvious almost complex structure $I$ on
$M^s_\sssetC$, which is simply given by $I\alpha = i\alpha$, and
clearly this anti-commutes with $J$. Both of these almost complex
structure are in fact integrable, as seen through the correspondence
with Higgs bundles discussed below, giving $M^s_\sssetC$ the structure
of a hyperk\"ahler manifold.

The K\"ahler structure $J$ only depends on the Riemann surface
structure up to isotopy, so in fact we get a family $J \colon
\mathcal{T}_\Sigma \to C^\infty(M^s_\sssetC, \End(TM^s_\sssetC))$ of
K\"ahler structures parametrized by the Teichm\"uller space
$\mathcal{T}_{\Sigma}$ of the surface. At a unitary connection $A$,
the family $J$ clearly preserves the real subspace $H^1_A(\Sigma, \g)$
tangent to $M^s$, so $J$ also defines a family of K\"ahler structures
on $M^s$. This is indeed the polarization we shall use for quantizing
$M^s$, but for $M^s_\sssetC$ we shall instead rely on a certain real
polarization, considered by Witten in \cite{MR1099255}, which also
depends on the Riemann surface structure of $\Sigma$. Let us first
recall how the quantization of the moduli space $M^s$ for compact
gauge group proceeds.

The moduli space $M^s$ is simply connected, so in particular $H^1(M^s,
\setR)$ vanishes, and furthermore $H^2(M^s, \setZ) = \setZ$ (see
\cite{MR702806,MR1605216,1408.2499}). For a given point $\sigma \in
\mathcal{T}_\Sigma$ in Teichm\"uller space, the properties of
$M^s_\sigma$ as a complex manifold can be understood through the
classical work of Narasimhan and Seshadri \cite{MR0166799}, which
identifies $M$ with the moduli space of S-equivalence classes of
semi-stable holomorpic vector bundles, over the Riemann surface
$\Sigma$, of rank $n$, degree $d$ and fixed determinant.  This space
has the structure of an normal projective algebraic variety which is
typically singular but contains the moduli space of stable bundles as
an open smooth subvariety, corresponding exactly to the manifold $M^s$
of irreducible connections. In this picture, the holomorphic tangent
space $T_{[E]}M^s$ at a stable holomorphic bundle $E$ is given by the
cohomology $H^1(\Sigma, \End_0 \!E)$, with values in the traceless
endomorphisms. As we saw above, the moduli space $M^s$ admits a
prequantum line bundle $\mathcal{L}$, which is just the determinant
line bundle in this picture. It generates the Picard group, so in
particular $[\tfrac{\omega_1}{2\pi}]$ is a generator of $H^2(M^s,
\setZ)$, and $c_1(M^s) = \ccint [\frac{\omega_1}{2\pi}]$ with $\ccint
= 2\gcd(r,d)$ as proved in \cite{MR999313}.

If the rank $n$ and the degree $d$ are coprime, there are no strictly
semi-stable bundles, so $M = M^s$ is a compact K\"ahler manifold. In
this case, clearly $H^0(M^s, \mathcal{O}) = \setC$, but as observed by
Hitchin \cite{MR1065677}, this holds even for non-coprime $n$ and $d$
by the Hartogs theorem, since the complement of $M^s$ in $M$ has
codimension at least 2. Hitchin also notes that the family $J$ of
K\"ahler structures on $M^s$ parametrized by Teichm\"uller space,
which is itself in a canonical way a contractible complex manifold, is
holomorphic and rigid in the sense of \Fref{def:3} and \Fref{def:4}.
Finally, the K\"ahler metric on the moduli space of stable bundles was
studied by Zograf and Takhtajan in \cite{MR1018746}, where they give a
Ricci potential in terms of the determinant of the Laplacian on the
endomorphism bundle of a stable bundle.

Altogether, the discussion above demonstrates that the
moduli space $M^s$ satisfies all the conditions of \Fref{thm:58} to
ensure the existence of a Hitchin connection. Furthermore, in the case
of coprime $n$ and $d$, Narasimhan and Ramanan \cite{MR0384797} have
shown that $M^s$ does not admit any holomorphic vector fields, so
\Fref{thm:MT} implies that the Hitchin connection must be projectively
flat.

Projective flatness also holds in the non-coprime case, which
similarly does not admit holomorphic vector fields. Hitchin
\cite{MR1065677} proves this by regarding a holomorphic vector field
on $M^s$ as a holomorphic function on the cotangent bundle $T^*M^s$,
which sits inside the moduli space of semi-stable Higgs bundles
$\mathcal{M}$, as discussed below. Once again appealing to the Hartogs
theorem, the function, which is homogeneous of degree 1 in the action
of $\setC^*$ and constant along fibers of the Hitchin fibration, can
be extended to $\mathcal{M}$, which is quite easily seen not to
support such functions. The proof does not apply to the special
situation when the genus $g$ and the rank $n$ are both equal to 2, and
the holonomy around $\gamma$ is trivial, because the moduli space
$M_\sssetC$ is isomorphic to $\setC P^3$ and clearly has holomorphic
vector fields.

In summary, we have proved projective flatness of the Hitchin
connection for the moduli spaces as claimed in \Fref{thm:MTCS}. Our
proof of projective flatness follows the original proof by Hitchin
\cite{MR1065677} from the point where the curvature is known to be at
most order one, but vanishing of the higher-order terms was
established by Hitchin through particular properties of the Hitchin
integrable system \cite{MR885778}, whereas we derive it from rigidity
of the family of K\"ahler structures.

We now turn to the problem of quantizing the moduli space
$M^s_\sssetC$ of irreducible $G_\sssetC$-connections. The theory of
Higgs bundles naturally enters in this discussion as well. Suppose
again that $\Sigma$ is endowed with a Riemann surface structure and
recall that a Higgs bundle is a pair $(E, \Phi)$, where $E \to \Sigma$
is a holomorphic vector bundle and the Higgs field $\Phi \in
H^0(\Sigma, \End_0\! E \otimes K)$ is a holomorphic one-form with
values in the traceless endomorphisms of $E$. Stability for Higgs
bundles is defined by imposing the usual stability condition on the
slope of subbundles $E' \subset E$, but only the invariant ones
satisfying $\Phi(E') \subset E' \otimes K$.  This leads to the moduli
spaces $\mathcal{M}$ and $\mathcal{M}^s$, of semi-stable, respectively
stable, Higgs bundles of rank $n$, degree $d$ and fixed
determinant. Nitsure \cite{MR1085642} proves that $\mathcal{M}$ is a
quasi-projective algebraic variety, which contains $\mathcal{M}^s$ as
an open smooth subvariety. Through the work of Hitchin, Simpson,
Donaldson and Corlette
\cite{MR887284,MR944577,MR1179076,MR887285,MR965220}, the moduli space
$\mathcal{M}$ can be identified, via the Hitchin equations and
non-abelian Hodge theory, with the moduli space $M_\sssetC$ of flat
reductive $G_\sssetC$-connections, with stable bundles corresponding
to irreducible connections.

If $E$ is itself already a stable bundle, then obviously stability as
a Higgs bundle is implied for any Higgs field $\Phi$. In particular,
by simply taking the Higgs field to be zero, the moduli space $M^s$
sits canonically inside $\mathcal{M}^s \cong M^s_\sssetC$, as we
discussed already at the level of representations. But in fact, the
entire cotangent bundle of $M^s$ can be embedded in $M^s_\sssetC$,
although the embedding depends crucially on the Riemann surface
structure of $\Sigma$. Indeed, we get the following identification by
Serre duality,
\begin{align*}
  H^0(\Sigma, \End_0\! E \otimes K) \cong H^1(\Sigma, \End_0 \!E)^*,
\end{align*}
where we recognize the right-hand side as the holomorphic cotangent space of the
moduli space $M^s$. In other words, the Higgs fields on stable bundles can
be viewed as cotangent vectors to $M^s$ through Serre duality.
The Higgs bundle model $\mathcal{M}^s$ of $M_\sssetC^s$ illuminates the
hyperk\"ahler structure but it also carries a natural action of
$\setC^*$ by scaling the Higgs field.

To quantize the modulu space $M^s_\sssetC$, we follow Witten
\cite{MR1099255} and define a family of real polarizations,
parametrized by the Teichm\"uller space of $\Sigma$, in the following
way. For any Riemann surface structure on $\Sigma$, in the form of a
star-operator $*$ as usual, we can use Hodge theory to split the space
$H_A^1(\Sigma, \gc)$ into types,
\begin{align*}
  H_A^1(\Sigma, \gc) = H^{1,0}_A(\Sigma, \gc) \oplus H^{0,1}_A(\Sigma,
  \gc).
\end{align*}
Although this uses the complex structure on $H_A^1(\Sigma, \gc)$, it
defines a splitting of the underlying real space, which 
is a model for the real tangent space $T_{[A]}M^s_\sssetC$. Since the
symplectic form $\omega_\sssetC$ is invariant under the Hodge-star, the
same of course applies to $\omega_t$, so that each of the summands are
Lagrangian. 

We shall take the subspaces $H^{1,0}_A(\Sigma, \gc)$ as a
polarization, and define the quantum space to be the polarized
sections of $\mathcal{L}_\sssetC$, which are covariantly constant along
its leaves. Notice that the polarization at a unitary connection $A$
is transverse to the tangent space $T_{[A]}M^s = H^1_A(\Sigma, \g)$,
simply because the only real form of type (1,0) is the zero form. This
means that a polarized section is determined by its values on the
space $M^s$, or in other words, that the quantum space is identified
with the prequantum space $C^\infty(M^s, \mathcal{L}^k)$ of smooth
sections over $M^s$, although the identification depends on the
complex structure on $\Sigma$. Surely this identification requires
covariantly constant sections to exist on the leaves of the foliation,
which would be guaranteed for instance if they were simply connected,
but we shall not deal with this question here. Instead, the discussion
can be taken as motivation for using $C^\infty(M^s, \mathcal{L}^k)$ as
a model for the quantum space of $M^s_\sssetC$.

To understand how the identification between the quantum space of
$M^s_\sssetC$ and the prequantum space $C^\infty(M^s, \mathcal{L}^k)$ depends
on the Riemann surface structure, we consider the latter as the fiber
of a trivial bundle over Teichm\"uller space,
\begin{align*}
  \mathcal{T}_\Sigma \times C^\infty(M^s, \mathcal{L}^k) \to
  \mathcal{T}_\Sigma.
\end{align*}
Then the expression \eqref{eq:38} defines a connection $\tilde
\boldnabla_V$ on this bundle, which is projectively flat by
\Fref{thm:MT}, once again due to the fact that the moduli space $M^s$
does not admit any holomorphic vector fields for any of the K\"ahler
structures in the family $J$ parametrized by Teichm\"uller space. This
establishes the second statement of \Fref{thm:MTCS} on the
Hitchin-Witten connection. We also expand the statement in the
following theorem.

\begin{theorem}
  \label{thm:3}
  For any $t \in \setC$ with $k = \re(t) \in \setZ$, the trivial
  bundle $\mathcal{T}_\Sigma \times C^\infty(M^s, \mathcal{L}^k)$ over
  Teichm\"uller space, with fiber given by the smooth sections of the
  Chern-Simons line bundle over the moduli space of flat $\SU(n)$
  connections, has a projectively flat connection given by
  \begin{align*}
    \tilde \boldnabla_V \!=\! \trivcon_V \! + \!\frac{1}{2t}
    (\Delta_{G(V)} \! + \!2 \nabla_{\!G(V) \trdot dF}\! -\! 2\ccint
    V'[F]) - \frac{1}{2\bar t} (\Delta_{\bar G(V)} \! + \!2 \nabla_{\!
      \bar G(V) \trdot dF} \!- \! 2\ccint V''[F]) \!+ \! V[F],
  \end{align*}
  for any vector field $V$ on Teichm\"uller space.
\end{theorem}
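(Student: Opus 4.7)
The plan is to reduce the statement to a direct application of \Fref{thm:2}, by assembling from the preceding discussion the geometric inputs needed for the Hitchin-Witten connection to be defined on $\mathcal{T}_\Sigma \times C^\infty(M^s, \mathcal{L}^k)$ and to satisfy the vanishing condition that forces projective flatness.

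First I would check the prequantization hypotheses. The moduli space $M^s$ is simply connected, so $H^1(M^s,\setR) = 0$, and the Chern-Simons line bundle $\mathcal{L}$ built from the cocycle $\Theta_t$ restricts to a prequantum line bundle over $M^s$ with curvature $-ik\omega$. The Chern class identity $c_1(M^s, \omega) = \ccint \big [ \tfrac{\omega}{2\pi} \big ]$ holds with $\ccint = 2\gcd(n,d)$ by \cite{MR999313}. Next I would assemble the properties of the family $J$: by Hitchin's analysis, it is both holomorphic (\Fref{def:3}) and rigid (\Fref{def:4}) as a family over Teichm\"uller space, and the Zograf--Takhtajan construction \cite{MR1018746} furnishes a family of Ricci potentials $F$ via determinants of Laplacians on endomorphism bundles. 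Finally, a Hartogs extension argument, using that the complement of $M^s$ in the semi-stable moduli space has codimension at least two, shows that no $M^s_\sigma$ admits non-constant holomorphic functions. With these hypotheses in hand, \Fref{thm:58} applies and the Hitchin-Witten connection \eqref{eq:38} is well-defined on the trivial bundle, with curvature described by \Fref{thm:1}.

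It remains to apply \Fref{thm:2}, which reduces projective flatness of $\tilde \boldnabla$ to the vanishing of the holomorphic vector field $X'_{\d_{\scriptscriptstyle \mathcal{T}} c(V,W)}$ on $M^s$ for all vector fields $V, W$ on $\mathcal{T}_\Sigma$. For coprime $(n,d)$ this vanishing follows immediately from Narasimhan--Ramanan \cite{MR0384797}, asserting that $M^s$ admits no non-zero holomorphic vector fields in any of the complex structures in the family. For non-coprime $(n,d)$, Hitchin's Hartogs argument on the Higgs bundle moduli space $\mathcal{M}$ extends the vanishing, outside the exceptional genus-two, rank-two, trivial-holonomy case which is ruled out by the choice of $d$. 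Combining these two cases gives the hypothesis of \Fref{thm:2}, from which projective flatness of $\tilde \boldnabla$, and hence the theorem, follows at once. The plan is thus mostly a bookkeeping exercise: the genuine analytic content has already been encoded in \Fref{thm:1} and \Fref{thm:2}, and the main obstacle — collecting the moduli-space-specific inputs of rigidity, the Ricci potential, and the absence of holomorphic vector fields — is handled by citing Hitchin, Zograf--Takhtajan and Narasimhan--Ramanan as recalled earlier in the section.
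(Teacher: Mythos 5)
Your proposal matches the paper's own argument: the paper establishes \Fref{thm:3} through exactly the discussion you reconstruct, namely verifying the hypotheses of \Fref{thm:58} for $M^s$ (simple connectivity, the Chern class computation of Drezet--Narasimhan, rigidity and holomorphicity of the Teichm\"uller family after Hitchin, the Zograf--Takhtajan Ricci potential, and the Hartogs argument for the absence of non-constant holomorphic functions) and then invoking the absence of holomorphic vector fields (Narasimhan--Ramanan in the coprime case, Hitchin's Higgs-bundle Hartogs argument otherwise) to apply \Fref{thm:2} via \Fref{thm:MT}. The bookkeeping is complete and correct, so nothing further is needed.
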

In this way, the quantum spaces arising from different real
polarizations as above are projectively identified through the
parallel transport of the Hitchin-Witten connection.


\clearpage

\nocite{MR2330673,MR1865275}

\renewcommand{\bibname}{References}



\newcommand{\etalchar}[1]{$^{#1}$}
\def\cprime{$'$} \def\cprime{$'$} \def\cprime{$'$}

\end{document}